\newcommand\be{\begin{equation}}
\newcommand\ee{\end{equation}}
\newcommand\bes{\begin{eqnarray}}
\newcommand\ees{\end{eqnarray}}
\newcommand{\old}[1]{{}}
\newenvironment{proof}{{\flushleft \bf Proof:}}{}
\newtheorem{algorithm}{Algorithm}[section]}
\newtheorem{theorem}{Theorem}[section]
\newtheorem{remark}{Remark}[section]}
\newtheorem{example}{Example}[section]}
\numberwithin{equation}{section}
\newcommand*\xbar[1]{%
  \hbox{%
    \vbox{%
      \hrule height 0.5pt 
      \kern0.5ex
      \hbox{%
        \kern-0.1em
        \ensuremath{#1}%
        \kern-0.0em
      }%
    }%
  }%
}
\numberwithin{equation}{section}
\numberwithin{figure}{section}
\numberwithin{table}{section}
\title{Well-Balanced Central Schemes on Overlapping Cells with Constant Subtraction Techniques for the Saint-Venant Shallow Water System}
\author{Suo Yang \footnotemark[2], Alexander Kurganov \footnotemark[3], Yingjie Liu \footnotemark[4]}
\date{}
\begin{document}

\maketitle

\renewcommand{\thefootnote}{\fnsymbol{footnote}}
\footnotetext[2]{School of Mathematics, Georgia Institute of Technology, Atlanta, GA 30332; {\tt syang305@gatech.edu}. Research supported in
part by NSF grant DMS-1115671.}

\footnotetext[3]{Mathematics Department, Tulane University, New Orleans, LA 70118; {\tt kurganov@math.tulane.edu}. Research supported in
part by NSF grant DMS-1216957 and ONR grant N00014-12-1-0833.}

\footnotetext[4]{School of Mathematics, Georgia Institute of Technology, Atlanta, GA 30332; {\tt yingjie@math.gatech.edu}. Research
supported in part by NSF grant DMS-1115671.}

\renewcommand{\thefootnote}{\arabic{footnote}}

\begin{abstract}
We develop well-balanced central schemes on overlapping cells for the Saint-Venant shallow water system and its variants. The main challenge
in deriving the schemes is related to the fact that the Saint-Venant system contains a geometric source term due to nonflat bottom
topography and therefore a delicate balance between the flux gradients and source terms has to be preserved. We propose a constant
subtraction technique, which helps one to ensure a well-balanced property of the schemes, while maintaining arbitrary high-order of
accuracy. Hierarchical reconstruction limiting procedure is applied to eliminate spurious oscillations without using characteristic
decomposition. Extensive one- and two-dimensional numerical simulations are conducted to verify the well-balanced property, high-order of
accuracy, and non-oscillatory high-resolution for both smooth and nonsmooth solutions.
\end{abstract}

\section{Introduction}
The Saint-Venant system \cite{SaintVenant1871} is one of the most commonly used models of shallow water flows in rivers or coastal areas.
Let $h$ represent the water depth, $u$ represent the velocity, $B$ represent the bottom elevation, and $g$ be the gravitational acceleration
constant. Then the one-dimensional (1-D) Saint-Venant shallow water system has the following form:
\begin{equation}
\left\{\begin{aligned}&h_t+(hu)_x=0,\\&(hu)_t+\Big(hu^2+\frac{1}{2}gh^2\Big)_x=-ghB_x.\end{aligned}\right.
\label{1DSV}
\end{equation}

Similarly, let $u$ and $v$ represent the $x$- and $y$-velocities. Then the two-dimensional (2-D) Saint-Venant shallow water system reads
\begin{equation}
\left\{\begin{aligned}&h_t+(hu)_x+(hv)_y=0,\\&(hu)_t+\Big(hu^2+\frac{1}{2}gh^2\Big)_x+(huv)_y=-ghB_x,\\
&(hv)_t+(huv)_x+\Big(hv^2+\frac{1}{2}gh^2\Big)_y=-ghB_y.\end{aligned}\right.
\label{eqn:2DSV}
\end{equation}

The viscous effects were neglected by asymptotic analysis in the derivation of the Saint-Venant system \cite{SaintVenant1871}, and
therefore, in the case of flat bottom topography ($B_x\equiv B_y\equiv0$), its behavior and features are very similar to the Euler equations
of isentropic gas dynamics. For situations like dam breaking, strong shocks can be formed just like in the isentropic gas dynamics. For
these reasons, high-resolution non-oscillatory shock-capturing schemes are required to solve the Saint-Venant system, which in the case of
nonflat bottom topography is a hyperbolic system of balance laws. In principle, all numerical schemes designed for hyperbolic conservation
laws can be extended to balance laws. This motivates us to study applicability of central schemes on overlapping cells (CSOC), originally
developed for hyperbolic systems of conservation laws in \cite{Liu05,LiuShu07,LiuShu07b}, to the shallow water equations.

In the past three decades, many numerical methods for the Saint-Venant system have been proposed. Just like many other systems of balance
laws, the Saint-Venant system admits steady-state solution, in which the flux gradient is exactly balanced by the source term. The simplest
steady-state solutions are ``lake at rest'' ones. In the 1-D case, they are
\begin{equation*}
w:=h+B\equiv{\rm Const},\qquad u\equiv0,
\end{equation*}
where $w$ represents the water surface. Similarly, the 2-D ``lake at rest'' satisfies
\begin{equation}
w\equiv{\rm Const},\qquad u\equiv v\equiv0.
\label{ss2D}
\end{equation}
The ``lake at rest'' solutions are physically significant since most of physically relevant water waves are in fact small perturbations of
``lake at rest'' steady states. We therefore say that a scheme is well-balanced if it is capable of exactly preserving ``lake at rest''
steady states. Unfortunately, a straightforward discretization of the geometric source term typically leads to a non-well-balanced scheme,
which may produce artificial waves that are an order of magnitude larger than the physical waves especially when a coarse grid is used
(which is always the case in practical applications in which using sufficiently fine grids is typically unaffordable).

A second-order accurate quasi-steady wave-propagation scheme was proposed in \cite{LeVeque98}. In this scheme, a new Riemann problem is
introduced at the centroid of each cell such that the flux difference can exactly cancel the contribution of the source term. A
Riemann-problem-solver-free central-upwind scheme was proposed in \cite{KurganovLevy02,KurganovPetrova07}. In this scheme, $w$ rather than
$h$ is used in the reconstruction step to keep equilibrium variables constant and the numerical flux term can be approximated with
high-order accuracy, while the source term is only second-order accurate to preserve the well-balanced property. A higher-order
discretization of the source term was proposed in \cite{NPPN,NXS}. Another approach was introduced in \cite{XingShu05,XingShu06,XingShu06b},
where high-order well-balanced finite-difference and finite-volume weighted essentially non-oscillatory (WENO) schemes as well as
discontinuous Galerkin methods were derived using a special local splitting of the source term for which all operators are linear and thus
the well-balanced property can be easily achieved. For several other well-balanced schemes for the Saint-Venant system we refer the reader
to \cite{ABBKP,GHS03,Jin,LMNK,Rus05}.

In this paper, we develop CSOC with the hierarchical reconstruction (HR) limiter \cite{Liu05,LiuShu07,LiuShu07b} for both the 1-D and 2-D
Saint-Venant systems. Just like the schemes in \cite{XingShu05,XingShu06,XingShu06b}, our scheme can also be formulated to achieve arbitrary
high-order while still preserving well-balanced property using the constant subtraction technique, which is substantially easier to
implement than the well-balancing techniques used in \cite{XingShu05,XingShu06,XingShu06b}. Another attractive feature of the proposed CSOC
is that no (approximate) Riemann problem solver needs to be implemented and all significant spurious oscillations can be removed by the HR
limiter implemented together with a new remainder correction technique without local characteristic decomposition.

This paper is organized as follows. In Section \ref{sec:coc}, we briefly review the CSOC for general hyperbolic systems of balance laws. In
Section \ref{sec:wb}, we propose the constant subtraction technique and prove that it leads to well-balanced schemes. In Section
\ref{sec:HR}, we review the HR limiters and develop the remainder correction technique. Extensive numerical simulations are conducted in
Sections \ref{sec7} and \ref{sec8} for the 1-D and 2-D Saint-Venant systems, respectively. Finally, conclusions and perspectives of the
future work are given in Section \ref{sec9}.

\section{Central Schemes on Overlapping Cells---A Brief Overview}\label{sec:coc}
A general 1-D hyperbolic system of balance laws has the following form:
\begin{equation}
\bm{u}_t+\bm{f}(\bm{u})_x=\bm{S}(\bm{u},x,t).
\label{eqn:1DBalanceLaw}
\end{equation}
Let $D_{i+\frac{1}{2}}:=[x_i,x_{i+1}]$ be a cell of uniform ($x_{i+1}-x_i\equiv\Delta x$) partition of the real line, and let
$\bm{V}_{i+\frac{1}{2}}^n$ be the corresponding computed cell averages of $\bm{u}$ at time $t^n$:
$$
\bm{V}_{i+\frac{1}{2}}^n:\approx\frac{1}{\Delta x}\int\limits_{x_i}^{x_{i+1}}\bm{u}(x,t^n)\,dx.
$$
Let $C_i:=[x_{i-\frac{1}{2}},x_{i+\frac{1}{2}}]$ be a dual cell of staggered uniform partition, and let $\bm{U}_i^n$ be the corresponding
computed cell averages of $\bm{u}$ at time $t^n$:
$$
\bm{U}_i^n:\approx\frac{1}{\Delta x}\int\limits_{x_{i-\frac{1}{2}}}^{x_{i+\frac{1}{2}}}\bm{u}(x,t^n)\,dx.
$$
We can now apply CSOC from \cite{Liu05,LiuShu07,LiuShu07b} to \eqref{eqn:1DBalanceLaw} to get the following fully discrete form (for
conciseness, we will only show the updating formula for $\{\bm{V}_{i+\frac{1}{2}}\}$, the formula for $\{\bm{U}_i\}$ is similar):
\begin{align}
\bm{V}_{i+\frac{1}{2}}^{n+1}&=\theta\frac{1}{\Delta x}\int\limits_{x_i}^{x_{i+1}}\widetilde{\bm{U}}^n(x)\,dx+
(1-\theta)\bm{V}_{i+\frac{1}{2}}^n-\frac{\Delta t}{\Delta x}\big[\bm{f}(\widetilde{\bm{U}}^n(x_{i+1}))-\bm{f}(\widetilde{\bm{U}}^n(x_i))
\big]\nonumber\\
&+\frac{\Delta t}{\Delta x}\int\limits_{x_i}^{x_{i+1}}\bm{S}(\widetilde{\bm{U}}^n(x),x,t^n)\,dx.
\label{1DFullDiscScheme}
\end{align}
Here, $\widetilde{\bm{U}}$ is the reconstructed piecewise polynomial approximation of $\bm{U}(x,t^n)$, and
$\theta=\Delta t/\Delta\tau$, where $\Delta\tau$ is an upper bound for the current time stepsize $\Delta t$. $\Delta\tau$ is restricted by
the CFL condition $\frac{a\Delta\tau}{\Delta x}\le\frac{1}{2}$, where $a$ is the supremum of the spectral radius of the Jacobian
$\frac{\partial\bm{f}}{\partial\bm{u}}$ over all of the relevant values of $\bm{u}$. Also notice that CSOC with $\theta=1$ is a first-order
in time version of the (staggered) Nessyahu-Tadmor scheme \cite{NessyahuTadmor90}. For pure hyperbolic systems, $\theta$ in principle should
be as large as possible to allow large $\Delta t$ and hence reduce the computational cost. When the source term is stiff, one can also take
a smaller value of $\theta$.

If we subtract $\bm{V}_{i+\frac{1}{2}}^n$ from both sides of \eqref{1DFullDiscScheme}, divide by $\Delta t$, and take the limit as
$\Delta t\to0$, we obtain the following semi-discrete form of the CSOC:
\begin{equation}
\frac{d}{dt}\bm{V}_{i+\frac{1}{2}}=\frac{1}{\Delta\tau}\bigg[\frac{1}{\Delta x}\int\limits_{x_i}^{x_{i+1}}\widetilde{\bm{U}}(x)\,dx-
\bm{V}_{i+\frac{1}{2}}\bigg]-\frac{1}{\Delta x}\Big[\bm{f}(\widetilde{\bm{U}}(x_{i+1}))-\bm{f}(\widetilde{\bm{}U}(x_i))\Big]
+\frac{1}{\Delta x}\int\limits_{x_i}^{x_{i+1}}\bm{S}(\widetilde{\bm{U}}(x),x,t)dx.
\label{1DSemiDiscScheme}
\end{equation}
One should use a stable, sufficiently accurate ODE solver to evolve the solution in time.
\begin{remark}
In our numerical experiments, we have used the third-order strong-stability preserving Runge-Kutta (SSP-RK3) method \cite{GKS,GST,ShuOsh88}.
\end{remark}
\begin{remark}
Multidimensional CSOC can be derived similarly, see \cite{Liu05}.
\end{remark}

\section{Constant Subtraction Technique}\label{sec:wb}
Our goal is to design well-balanced CSOC. We first consider the 1-D case and denote the equilibrium variables by $\bm{a}:=(w,hu)^T$, which
remains constant at ``lake at rest'' steady states. Next, we rewrite the geometric source term using the equilibrium variable $w$ as
follows:
\begin{equation}
-ghB_x=-g(w-B)B_x=\Big(\frac{1}{2}gB^2\Big)_x-gwB_x,
\label{3.1}
\end{equation}
where the term $(\frac{1}{2}gB^2)_x$ is in conservative form.
\begin{remark}
Notice that the same source term decomposition was used in \cite{XingShu05} to maintain a well-balanced property of arbitrary high-order
finite-difference schemes.
\end{remark}

Since a direct application of the CSOC to the Saint-Venant system \eqref{1DSV} does not guarantee the resulting method to be
well-balanced, we modify the system and obtain the well-balanced CSOC using the following algorithm.

\begin{algorithm}[Constant Subtraction Technique]\label{alg31}
\item[]
\begin{enumerate}[\underline{Step \arabic{enumi}}.]
\item Let $\Omega$ be a computational domain of size $|\Omega|$. Denote the global spatial average of $w(x,t)$ by
$$
\xbar w(t):=\frac{1}{|\Omega|}\int\limits_\Omega w(x,t)\,dx
$$
and decompose the nonconservative term on the right-hand side (RHS) of \eqref{3.1} into the sum of a conservative term and a {\em constant
subtraction term} as follows:
\begin{equation}
-gwB_x=(-g\bar{w}B)_x+g(\bar{w}-w)B_x.
\label{3.2}
\end{equation}
\item Use \eqref{3.1} and \eqref{3.2} to rewrite the Saint-Venant system \eqref{1DSV} in terms of the equilibrium variables $\bm{a}$:
\begin{equation}
\left\{\begin{aligned}&w_t+(hu)_x=0,\\&(hu)_t+\bigg(\frac{(hu)^2}{w-B}+g[\xbar w(t)-w]B+\frac{g}{2}w^2\bigg)_x=
g[\xbar w(t)-w]B_x.\end{aligned}\right.
\label{WBsys}
\end{equation}
\item Apply the CSOC described in Section \ref{sec:coc} to the system \eqref{WBsys} in a straightforward manner.
\end{enumerate}
\end{algorithm}
\begin{remark}
The systems \eqref{WBsys} and \eqref{1DSV} are equivalent for both smooth and nonsmooth solutions.
\end{remark}
\begin{remark}
The term $g[\xbar w(t)-w]B_x$ will vanish at ``lake at rest'' steady states.
\end{remark}

\begin{theorem}\label{thm31}
The CSOC scheme with the forward Euler time discretization \eqref{1DFullDiscScheme} for the system \eqref{WBsys} is well-balanced.
\end{theorem}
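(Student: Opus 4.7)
The plan is a direct substitution argument: assume that at time $t^n$ the data represents a ``lake at rest'' state, and verify step by step that each ingredient of the updating formula \eqref{1DFullDiscScheme} (applied to \eqref{WBsys}) evaluates in such a way that $\bm V_{i+1/2}^{n+1}$ reproduces the same state. Concretely, the hypothesis to start from is that every cell average (both $\bm U_i^n$ and $\bm V_{i+1/2}^n$) has $w$-component equal to a common constant $C$ and $(hu)$-component equal to $0$. From this I would first extract two elementary consequences, which are the engine of the whole argument: (i) the global average satisfies $\xbar w(t^n)=C$, since $\xbar w(t^n)$ is itself a convex combination of the $w$-components of the cell averages; and (ii) the piecewise polynomial reconstruction $\widetilde{\bm U}^n$ preserves constants, so $\widetilde w^n(x)\equiv C$ and $\widetilde{hu}^n(x)\equiv 0$ on every reconstruction stencil.

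With these in hand, each of the four terms on the RHS of \eqref{1DFullDiscScheme} can be evaluated. The integral of the reconstruction gives $\frac{1}{\Delta x}\int_{x_i}^{x_{i+1}}\widetilde{\bm U}^n\,dx=(C,0)^T$, and the convex combination with $(1-\theta)\bm V_{i+1/2}^n=(1-\theta)(C,0)^T$ therefore reproduces $(C,0)^T$. For the flux, the key observation, which is the whole reason the constant-subtraction rewriting \eqref{WBsys} is useful, is that when $\widetilde w^n\equiv\xbar w(t^n)\equiv C$ and $\widetilde{hu}^n\equiv 0$, the two $x$-dependent pieces in the momentum flux disappear: $(hu)^2/(w-B)$ vanishes and $g[\xbar w(t^n)-w]B$ vanishes identically, leaving only the constant $\frac g2 C^2$. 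Hence $\bm f(\widetilde{\bm U}^n(x_{i+1}))-\bm f(\widetilde{\bm U}^n(x_i))=\bm 0$. Similarly the source integrand $g[\xbar w(t^n)-\widetilde w^n(x)]B_x$ is pointwise zero, so its integral is zero. Plugging these into \eqref{1DFullDiscScheme} yields $\bm V_{i+1/2}^{n+1}=(C,0)^T$.

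The only step that requires a moment's thought, and hence is the ``main obstacle'' if one is being strict, is the consistency between the reconstruction of $w$ and the quantity $\xbar w(t^n)$ used inside the rewritten flux and source. In particular one has to use the same value of $\xbar w(t^n)$ when forming the flux $\bm f(\widetilde{\bm U}^n(x_i))$ and when forming the source integrand, and one has to reconstruct the equilibrium variables $(w,hu)$ rather than $(h,hu)$; if either of these choices is violated, the cancellation above is not exact. I would explicitly point out that no assumption whatsoever is needed on how the bottom $B(x)$ is represented or reconstructed, because $B$ appears in both the nontrivial flux and source terms only through the factor $\xbar w(t^n)-\widetilde w^n(x)$, which is identically zero at a lake-at-rest state. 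This is the decisive feature of the constant-subtraction decomposition and is what makes the well-balanced property hold at arbitrary order of accuracy.
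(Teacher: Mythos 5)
Your proposal is correct and follows essentially the same route as the paper's proof: assume the cell averages are at a ``lake at rest'' state, observe that the reconstruction of the equilibrium variables is then constant, conclude that the flux difference and source integral in \eqref{1DFullDiscScheme} vanish, and note that the remaining terms form a convex combination of steady-state data. You simply spell out in more detail why each term vanishes (in particular that $B$ only enters through factors that are annihilated by $\widetilde{hu}\equiv 0$ or $\xbar w-\widetilde w\equiv 0$), which the paper asserts more briefly.
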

\begin{proof}
Note that at ``lake at rest'' steady states $\xbar w(t)$ is independent of time and assume that at time $t=t^n$ the cell averages of
$\bm{a}$ over both $C_i$ and $D_{i+1/2}$ cells are equal to $(\xbar w,0)^T$. After performing a (high-order) piecewise polynomial
reconstruction for the equilibrium variables $\bm{a}$, we obtain that the polynomial pieces over both $C_i$ and $D_{i+1/2}$ still satisfy
$\widetilde{hu}\equiv0$ and $\widetilde w\equiv\xbar w$. Therefore, both the flux difference and source term in the CSOC
\eqref{1DFullDiscScheme} vanish and $\bm{V}_{i+\frac{1}{2}}^{n+1}$ becomes a convex combination of
$\frac{1}{\Delta x}\int_{x_i}^{x_{i+1}}\widetilde{\bm{U}}^n(x)\,dx$ and $\bm{V}_{i+\frac{1}{2}}^n$, both of which are at ``lake at rest''
steady state. Therefore, the cell averages at $t=t^{n+1}$ also satisfy $hu=0$ and $w=\xbar w$ and the proof of the theorem is complete.
$\hfill\blacksquare$
\end{proof}
\begin{remark}
We would like to stress that in order to guarantee the well-balanced property, it is important to reconstruct the equilibrium variables
$\bm{a}$ rather than the original ones, $(h,hu)^T$.
\end{remark}
\begin{remark}
Since SSP ODE solvers \cite{GKS,GST,ShuOsh88} are based on a convex combination of several forward Euler steps, Theorem \ref{thm31} is valid
for the semi-discrete CSOC \eqref{1DSemiDiscScheme} combined with a higher-order SSP solver.
\end{remark}
\begin{remark}
All of the results from Section \ref{sec:wb} can be directly extended to the 2-D case.
\end{remark}

\section{Non-Oscillatory Hierarchical Reconstruction (HR)}\label{sec:HR}
For discontinuous solutions, a nonlinear limiting procedure is typically required to eliminate spurious oscillations in the vicinities of
discontinuities. In the past few decades, a wide variety of nonlinear limiting techniques including the MUSCL \cite{Lee74,Lee77,Lee79}, ENO
\cite{OshChaHar87,Shu90,ShuOsh88,SO2} and WENO \cite{shu97a,Shu09} reconstructions and many others have been developed for solving this
problem. In this paper, we use the HR limiting technique originally designed in \cite{LiuShu07,LiuShu07b} for overlapping grid methods.

\subsection{HR Process---A Brief Overview}\label{subsec:HR}
Let us assume that we are given a set of cell averages, $\varphi_i$ and $\varphi_{i+\frac{1}{2}}$, of a certain computed quantity on
overlapping cells. Then, using a standard conservation technique one can build a central piecewise polynomial reconstruction of degree $d$
on each cell (see, e.g., \cite{LiuShu07b}). Unfortunately, a piecewise polynomial approximant reconstructed in such a linear, nonlimited way
may have spurious oscillations in nonsmooth regions and thus it must be corrected using a nonlinear limiter.

Suppose we have reconstructed polynomial pieces over the overlapping cells $C_i$ and $D_{i+\frac{1}{2}}$,
$\varphi_i(x)=\sum\limits_{m=0}^d\frac{\varphi_i^{(m)}(x_i)}{m!}(x-x_i)^m$ and
$\varphi_{i+\frac{1}{2}}(x)=\sum\limits_{m=0}^d\frac{\varphi_{i+\frac{1}{2}}^{(m)}(x_{i+\frac{1}{2}})}{m!}(x-x_{i+\frac{1}{2}})^m$,
expressed in terms of Taylor polynomials centered at $x_i$ and $x_{i+\frac{1}{2}}$, respectively. We now describe the HR process applied to
$\varphi_i(x)$ (an application of the HR to $\varphi_{i+\frac{1}{2}}(x)$ is similar). Using the HR to limit the polynomial $\varphi_i(x)$ is
to modify its coefficients $\varphi_i^{(m)}(x_i)$ to obtain their new values $\widetilde\varphi_i^{(m)}(x_i)$, thus generating a
non-oscillatory polynomial $\widetilde\varphi_i(x)$ with the same order of accuracy. In the following, we use a pointwise HR proposed in
\cite{XuLiuShu11} to explain the 1-D HR algorithm.

\begin{algorithm}[Pointwise HR]\label{alg41}
\item[]
\begin{enumerate}[\underline{Step \arabic{enumi}}.]
\item Suppose $d\ge2$. Then, for $m=d,d-1,\cdots,1$ do the following:
\begin{enumerate}
\item Take the $(m-1)$th derivatives of $\varphi_i$ and $\varphi_{i\pm\frac{1}{2}}$ and write $\varphi_i^{(m-1)}(x)=L_{m,i}(x)+R_{m,i}(x)$,
where $L_{m,i}(x)$ is the linear part and $R_{m,i}(x)$ is the remainder.
\item Compute the cell average of $\varphi_i^{(m-1)}$ over $C_i$ to obtain the cell average $\overline{\varphi_i^{(m-1)}}$. Also compute the
point values $\varphi_{i+\frac{1}{2}}^{(m-1)}(x_{i+\frac{1}{2}})$ and $\varphi_{i-\frac{1}{2}}^{(m-1)}(x_{i-\frac{1}{2}})$.
\item Let $\widetilde R_{m,i}(x)$ be $R_{m,i}(x)$ with its coefficients replaced by the corresponding modified values. Compute the cell
average of $\widetilde R_{m,i}$ over $C_i$ to obtain the cell average $\overline{\widetilde R_{m,i}}$. Also compute the point values
$\widetilde R_{m,i}(x_{i+\frac{1}{2}})$ and $\widetilde R_{m,i}(x_{i-\frac{1}{2}})$.
\item Let $\,\xbar L_{m,i}:=\overline{\varphi_i^{(m-1)}}-\overline{\widetilde R_{m,i}}\,$ and
$\,\xbar L_{m,i\pm\frac{1}{2}}:=\varphi_{i\pm\frac{1}{2}}^{(m-1)}(x_{i\pm\frac{1}{2}})-\widetilde R_{m,i}(x_{i\pm\frac{1}{2}})$.
\item Reconstruct a non-oscillatory linear function $L(x)$ on $C_i$ using $\xbar L_{m,i}$, $\xbar L_{m,i+\frac{1}{2}}$ and
$\xbar L_{m,i-\frac{1}{2}}$, and define the modified coefficient $\widetilde\varphi_i^{(m)}(x_i):=L'(x)$.
\end{enumerate}
\item The modified $0$th degree coefficient $\widetilde\varphi_i(x_i)$ is chosen such that the cell average of $\widetilde\varphi_i(x)$ over
$C_i$ is equal to that of $\varphi_i$.
\end{enumerate}
\end{algorithm}

After the set of modified coefficients $\widetilde\varphi_i^{(m)}(x_i)$ has been computed, we obtain a non-oscillatory polynomial piece
$\widetilde\varphi_i(x)$ on $C_i$.

\begin{remark}\label{rmk:SmoothDetect}
The HR is quite computationally expensive. To substantially reduce the overall computational cost, one can utilize a smoothness detector to
turn off the HR in smooth regions. In all of the numerical simulations reported below, we have used the same low cost smoothness detector as
in \cite{XinHu05} and \cite{LiuShu07b}.
\end{remark}

\subsection{Remainder Correction Technique}\label{sec42}
As any of the existing high-order limiting techniques, the HR is capable of limiting the spurious oscillations, which unfortunately cannot
be completely eliminated, especially in the most demanding shallow water models containing nonconservative source terms appearing on the RHS
of \eqref{WBsys} in the case of discontinuous bottom topography function $B$. Here, we introduce a technique to further regulate the
remainder term $\widetilde R_{m,i}(x)$ in Step 1(c) of Algorithm \ref{alg41}. This technique does not affect its approximation order of
accuracy and further reduces possible overshoots/undershoots near discontinuities.

In this paper, we will only consider the third-order HR. Let $\widetilde R_{m,i}(x)=\alpha_{m,i}(x-x_i)^2$ (with $m=1,d=2$). Obviously,
$\widetilde R_{m,i}(x)={\cal O}((\Delta x)^2)$ in $C_i$, where $\Delta x$ is the spatial grid size. Based on $\widetilde R_{m,i}$, we want
to construct a corrected remainder $\widetilde R_{m,i}^{\rm corr}$ satisfying the following two conditions:
\begin{equation}
\left\{\begin{aligned}&\widetilde R_{m,i}^{\rm corr}(x)=\widetilde R_{m,i}(x)+{\cal O}((\Delta x)^3),\quad\forall x\in C_i,\\
&|\widetilde R_{m,i}^{\rm corr}(x)|<M,\quad\forall x\in\mathbb{R},~\mbox{for some constant}~M.\end{aligned}\right.
\label{4.1}
\end{equation}
The first requirement in \eqref{4.1} is needed to avoid any loss of accuracy. The second condition in \eqref{4.1} is introduced to control
the spurious oscillations, because $\widetilde R_{m,i}$ grows quite fast away from $x_i$ and the values
$\widetilde R_{m,i}(x_{i\pm\frac{1}{2}})$ used in Step 1(d) of Algorithm \ref{alg41} may lead to oscillations. There are many different ways
to ensure \eqref{4.1}. In this paper, we take
\begin{equation}
\widetilde R_{m,i}^{\rm corr}(x)=\frac{\widetilde R_{m,i}(x)}{1+\sqrt{|\alpha_{m,i}|}\,|x-x_i|+|\alpha_{m,i}|(x-x_i)^2}.
\label{4.2}
\end{equation}

\begin{theorem}\label{thm41}
The corrected remainder $\widetilde R_{m,i}^{\rm corr}$ given by \eqref{4.2} satisfies the two conditions in \eqref{4.1}.
\end{theorem}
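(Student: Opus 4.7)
The plan is to verify the two requirements in \eqref{4.1} separately, handling the easier boundedness condition first and then using a simple algebraic manipulation to obtain the third-order closeness on $C_i$.

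For the second (boundedness) condition, I would introduce the nonnegative auxiliary variable $y:=\sqrt{|\alpha_{m,i}|}\,|x-x_i|$. Then the numerator of \eqref{4.2} satisfies $|\alpha_{m,i}(x-x_i)^2|=y^2$, while the denominator equals $1+y+y^2$. Hence $|\widetilde R_{m,i}^{\rm corr}(x)|=y^2/(1+y+y^2)$, which is an elementary function of $y\ge 0$ bounded above by $1$ (since $1+y\ge 0$), independently of $\alpha_{m,i}$ and $x$. This gives the uniform bound $M=1$.

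For the first (accuracy) condition, I would write
\begin{equation*}
\widetilde R_{m,i}^{\rm corr}(x)-\widetilde R_{m,i}(x)
=\alpha_{m,i}(x-x_i)^2\cdot\frac{-\sqrt{|\alpha_{m,i}|}\,|x-x_i|-|\alpha_{m,i}|(x-x_i)^2}{1+\sqrt{|\alpha_{m,i}|}\,|x-x_i|+|\alpha_{m,i}|(x-x_i)^2},
\end{equation*}
so that the numerator vanishes like $|x-x_i|^3$ while the denominator is bounded below by $1$. On $C_i$ we have $|x-x_i|\le\Delta x/2$, and since $\widetilde R_{m,i}(x)=\alpha_{m,i}(x-x_i)^2={\mathcal O}((\Delta x)^2)$ on $C_i$ forces $\alpha_{m,i}={\mathcal O}(1)$, the right-hand side is ${\mathcal O}((\Delta x)^3)$. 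This establishes the required accuracy preservation.

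I do not expect a genuine obstacle: both claims reduce to elementary estimates on the rational function $y^2/(1+y+y^2)$. The only subtlety worth being explicit about is that the standing scaling assumption $\alpha_{m,i}={\mathcal O}(1)$ must be invoked when converting the local Taylor-type bound $|x-x_i|^3$ into the cell-size bound $(\Delta x)^3$; once that is stated, the proof amounts to a one-line bound for each of the two conditions.
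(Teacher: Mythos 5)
Your proposal is correct and follows essentially the same route as the paper: the accuracy condition is verified by observing that the correction factor differs from $1$ by ${\mathcal O}\big(\sqrt{|\alpha_{m,i}|}\,|x-x_i|\big)={\mathcal O}(\Delta x)$ on $C_i$ (using, as you rightly make explicit, that $\widetilde R_{m,i}={\mathcal O}((\Delta x)^2)$ forces $\alpha_{m,i}={\mathcal O}(1)$), so the discrepancy is ${\mathcal O}((\Delta x)^2)\cdot{\mathcal O}(\Delta x)={\mathcal O}((\Delta x)^3)$. The only difference is in the boundedness condition, where you give the explicit uniform bound $y^2/(1+y+y^2)\le 1$ with $y=\sqrt{|\alpha_{m,i}|}\,|x-x_i|$, whereas the paper argues more softly via continuity of $\widetilde R_{m,i}^{\rm corr}$ together with its limit $\pm1$ as $|x|\to\infty$; your version is a minor sharpening (it yields $M=1$ independent of $\alpha_{m,i}$), not a genuinely different argument.
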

\begin{proof} The definition of $\widetilde R_{m,i}^{\rm corr}$, \eqref{4.2}, and the fact that
$\widetilde R_{m,i}(x)={\cal O}((\Delta x)^2)$ in $C_i$ imply that the first condition in \eqref{4.1} holds, namely:
\begin{equation*}
\begin{aligned}
\widetilde R_{m,i}^{\rm corr}(x)&=
\widetilde R_{m,i}(x)\left[1+{\cal O}\Big(\sqrt{|\alpha_{m,i}|}\,|x-x_i|+|\alpha_{m,i}|(x-x_i)^2\Big)\right]
=\widetilde R_{m,i}(x)\left[1+{\cal O}\left(\Delta x+(\Delta x)^2\right)\right]\\
&=\widetilde R_{m,i}(x)+{\cal O}((\Delta x)^3), \quad\forall x\in C_i.
\end{aligned}
\end{equation*}
The second condition in \eqref{4.1} holds because $\widetilde R_{m,i}^{\rm corr}$ is continuous and
\begin{equation*}
\lim_{|x|\to\infty}\widetilde R_{m,i}^{\rm corr}(x)=
\lim_{|x|\to\infty}\frac{\alpha_{m,i}(x-x_i)^2}{1+\sqrt{|\alpha_{m,i}|}\,|x-x_i|+|\alpha_{m,i}|(x-x_i)^2}={\rm sgn}(\alpha_{m,i})=\pm1.
\end{equation*}
$\hfill\blacksquare$
\end{proof}
\begin{remark}
In the HR process presented in Algorithm \ref{alg41} one has to compute cell averages. This can be done analytically when the averaged
quantities are polynomials. However, for other functions, for example, for the corrected remainder \eqref{4.2}, it may be not easy or even
impossible to evaluate the integral exactly. In such case, we compute the required cell averages numerically using a quadrature of an
appropriate order that will not reduce the accuracy of the HR process (see \cite{XuLiuShu11} for details).
\end{remark}
\begin{remark}
The remainder correction technique presented in this section can be extended to higher-order HR by increasing the degree of the polynomial
in the denominator on the RHS of \eqref{4.2}.
\end{remark}

\section{One-Dimensional Numerical Examples}\label{sec7}
In this section, we demonstrate performance of the well-balanced CSOC with the HR limiter. We use the third-order schemes though
higher-order well-balanced CSOC can also be constructed. In all of the examples, we take the CFL number 0.4 and the gravitational
acceleration constant $g=9.812$.

\begin{example}[Verification of the Well-Balanced Property]
This test problem is taken from \cite{XingShu05}. The computational domain is $0\le x\le10$, and the initial condition is the ``lake at
rest'' state with $w(x,0)\equiv10,\,(hu)(x,0)\equiv0$, which should be exactly preserved. We use absorbing boundary conditions and test two
different bottom topography functions. The first one is smooth:
\begin{equation*}
B(x)=5e^{-\frac{2}{5}(x-5)^2},
\end{equation*}
while the second one is nonsmooth:
\begin{equation*}
B(x)=\left\{\begin{aligned}&4,&&\mbox{if}~4\le x\le8,\\&0,&&\mbox{otherwise}.\end{aligned}\right.
\end{equation*}
We use $N=200$ uniform cells, and obtain that even at large final times both the $L^1$- and $L^\infty$-errors are machine zeros for both
smooth and nonsmooth bottom topographies.
\end{example}

\begin{example}[Accuracy Test]\label{ex72}
The goal of this numerical example, taken \cite{XingShu05}, is to experimentally verify the order of accuracy of the (formally) third-order
CSOC. The computational domain is $0\le x\le1$ and the boundary conditions are periodic. The initial data and bottom topography are
\begin{equation*}
\begin{split}
&w(x,0)=5.5-0.5\cos(2\pi x)+e^{\cos(2\pi x)},\quad(hu)(x,0)=\sin(\cos(2\pi x)),\quad B(x)=\sin^2(\pi x).
\end{split}
\end{equation*}
We compute the solution of this initial-boundary value problem up to time $t=0.1$ when the solution is still smooth (shocks will be
developed at a later time). Since we use the smoothness detector mentioned in Remark \ref{rmk:SmoothDetect}, the HR limiter is essentially
turned off for this smooth solution.

Since the exact solution is not available, we use Aitken's formula \cite{NumAnaly} to estimate the experimental order of accuracy $r$:
\begin{equation*}
r=\log_2\left(\frac{\|u_{\frac{\Delta x}{2}}-u_{\Delta x}\|}{\|u_{\frac{\Delta x}{4}}-u_{\frac{\Delta x}{2}}\|}\right),
\end{equation*}
where $u_{\Delta x}$ denotes the numerical solution computed using the uniform grid of size $\Delta x$. In Table \ref{tab71}, we show the
experimental orders of accuracy measured in the $L^1$- and $L^\infty$-norms. As one can clearly see, the expected third order of accuracy is
reached for both $w$ and $hu$.
\begin{table}[ht!]
\centering
\begin{tabular}{|c||c|c|c|c|}
\hline
\multirow{2}{*}{$\Delta x$} & \multicolumn{2}{|c|}{$w$} & \multicolumn{2}{|c|}{$hu$}\\ \hhline{~----}
& $L^1$-order & $L^\infty$-order & $L^1$-order & $L^\infty$-order\\\hline
$1/50$   & $2.3671$ & $1.5893$ & $1.7588$ & $1.1060$\\
$1/100$  & $2.4040$ & $1.8154$ & $2.5282$ & $1.8850$\\
$1/200$  & $2.8303$ & $2.3321$ & $2.8365$ & $2.3537$\\
$1/400$  & $2.9355$ & $2.7597$ & $2.9361$ & $2.7652$\\
$1/800$  & $2.9880$ & $2.9796$ & $2.9885$ & $2.9729$\\
$1/1600$ & $2.9982$ & $2.9942$ & $2.9982$ & $2.9943$\\
\hline
\end{tabular}
\caption{\sf Example \ref{ex72}: Experimental orders of accuracy.\label{tab71}}
\end{table}
\end{example}

\begin{example}[Tidal Wave Flow]\label{ex76}
This example is taken from \cite{Bermudez94} and \cite{XingShu05}. The computational domain is $0\le x\le L$ with $L=14000$, the initial
data are
\begin{equation*}
w(x,0)\equiv60.5,\quad(hu)(x,0)\equiv0,
\end{equation*}
the bottom topography is given by
\begin{equation*}
B(x)=10+\frac{40x}{L}+10\,\sin\!\Big(\frac{4\pi x}{L}-\frac{\pi}{2}\Big),
\end{equation*}
and the boundary conditions are
\begin{equation*}
w(0,t)=64.5-4\,\sin\!\Big(\frac{4\pi t}{86400}+\frac{\pi}{2}\Big),\quad(hu)(L,t)=0.
\end{equation*}

This is a good test problem since a very accurate asymptotic approximation of the exact solution was obtained in \cite{Bermudez94}:
\begin{equation}
w(x,t)=64.5-4\,\sin\!\Big(\frac{4\pi t}{86400}+\frac{\pi}{2}\Big),\quad(hu)(x,t)=
\frac{\pi(x-L)}{5400}\,\cos\Big(\frac{4\pi t}{86400}+\frac{\pi}{2}\Big).
\label{7.1}
\end{equation}
We compute the numerical solution at time $t=7552.13$ using 200 uniform cells and compare the obtained results with \eqref{7.1}. As one can
see in Figure \ref{fig79}, the numerical and analytic approximate solutions are in a very good agreement.
\begin{figure}[ht!]
\centerline{\includegraphics[width=2.65in]{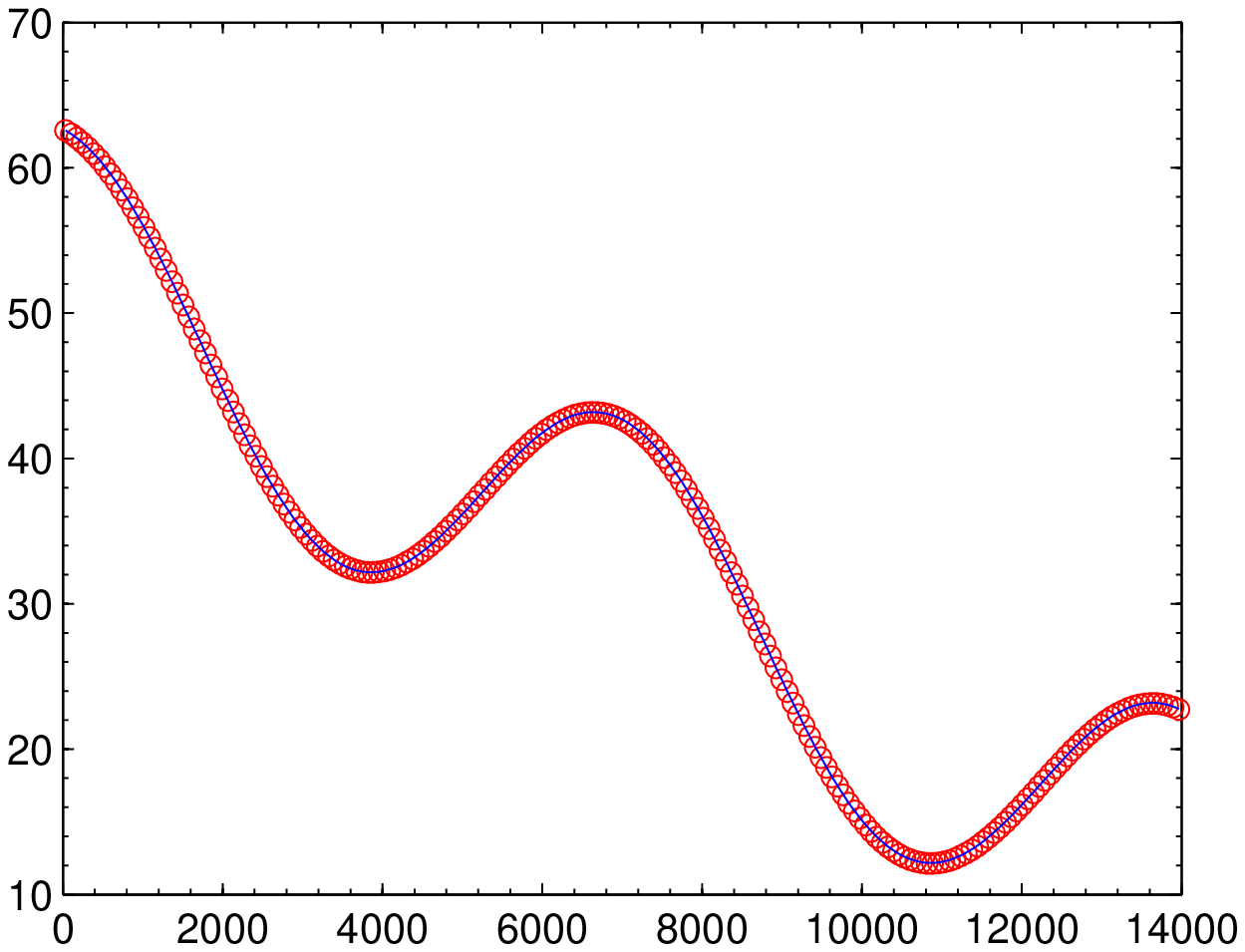}\includegraphics[width=2.65in]{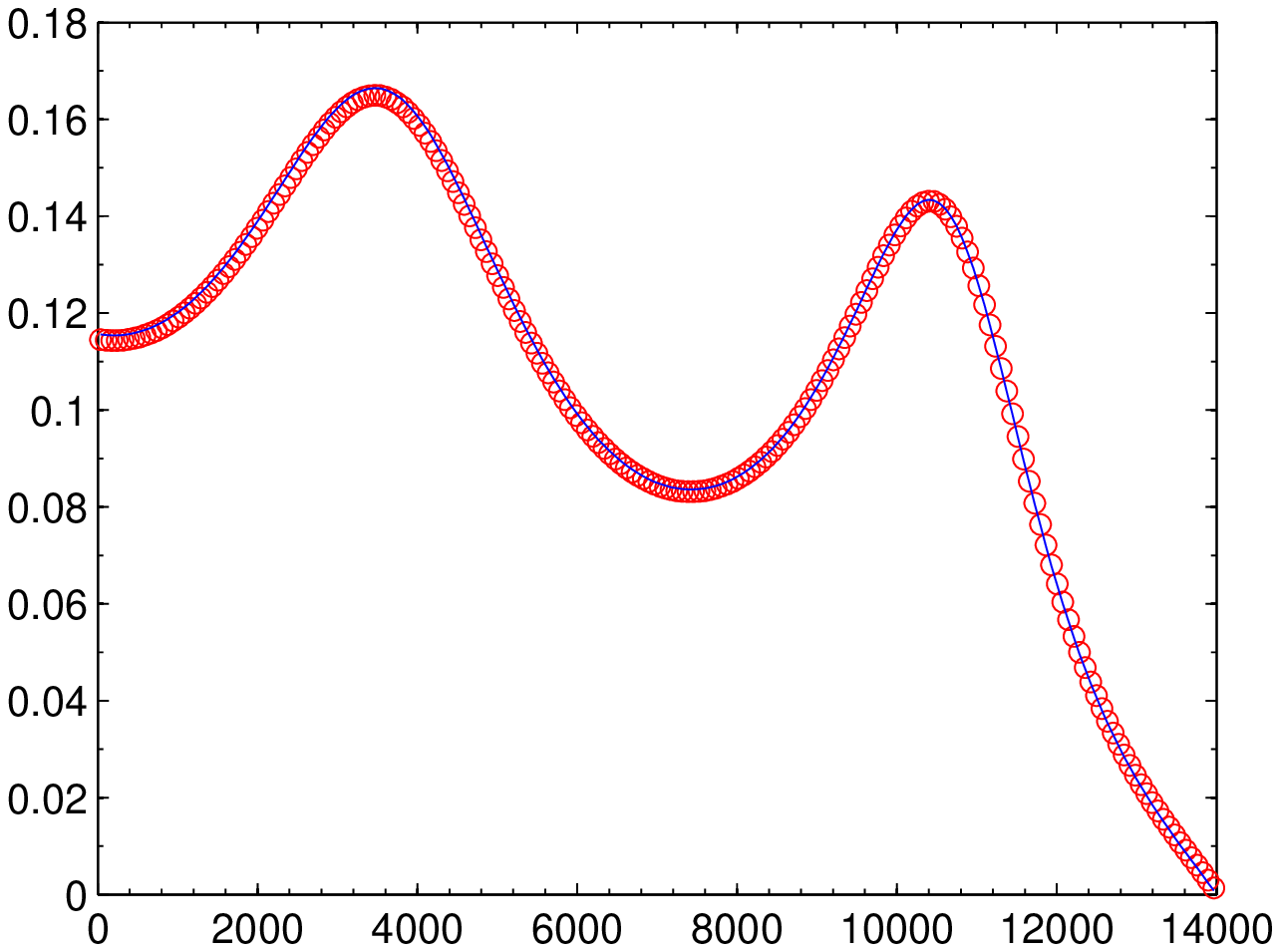}}
\caption{\sf Example \ref{ex76}: Numerical (circles) and analytic (solid line) approximate solutions ($h$ is on the left, $u$ is on the
right).\label{fig79}}
\end{figure}
\end{example}

\begin{example}[Perturbations of the ``Lake at Rest'' Steady State]\label{ex73}
This example is a slightly modified test problem, proposed in \cite{LeVeque98}, which was designed to verify the ability of tested scheme to
accurately capture quasi steady-state solutions. The computational domain is $0\le x\le2$, and the initial data are
\begin{equation*}
(hu)(x,0)\equiv0,\quad w(x,0)=\left\{\begin{aligned}&1+\varepsilon,&&\mbox{if}~1.1\le x\le1.2,\\&1,&&\mbox{otherwise},\end{aligned}\right.
\end{equation*}
where $\varepsilon$ is a small perturbation constant. We use absorbing boundary conditions and consider both large ($\varepsilon=0.2$) and
small ($\epsilon=0.001$) perturbations. The bottom topography contains a hump and is given by
\begin{equation*}
B(x)=\left\{\begin{aligned}&0.25\left[{\rm cos}(10\pi(x-1.5))+1\right],&&\mbox{if}~1.4\le x\le1.6,\\&0,&&\mbox{otherwise}.\end{aligned}
\right.
\end{equation*}

In this setting, the small perturbation of size $\varepsilon$ will split into two waves, one of which will propagate to the left, while the
other one will move to the right. The final time is set to be $t=0.2$, by which the right-going wave has already passed the bottom hump. It
is well-known (see, e.g., \cite{KurganovLevy02,LeVeque98,XingShu05}) that when $\varepsilon$ is small, non-well-balanced schemes cannot
capture the right-going wave without producing large magnitude artificial (nonphysical) waves unless an extremely fine mesh is used.

We compute the numerical solution by both the well-balanced and non-well-balanced CSOC on a 200 uniform grid and compare the obtained
results with the reference numerical solution computed using 3000 uniform cells. The results for $\varepsilon=0.2$ and $\varepsilon=0.001$
are shown in Figures \ref{fig71} and Figure \ref{fig72}, respectively. As one can see there, when $\varepsilon=0.2$ (relatively large
perturbation), there is no significant difference between the solutions computed by the well-balanced and non-well-balanced CSOC. On the
contrary, when $\varepsilon=0.001$ (much smaller perturbation), the non-well-balanced CSOC generates significant artificial waves, while the
well-balanced CSOC leads to a quite accurate non-oscillatory solution.
\begin{figure}[ht!]
\centerline{\includegraphics[width=2.65in]{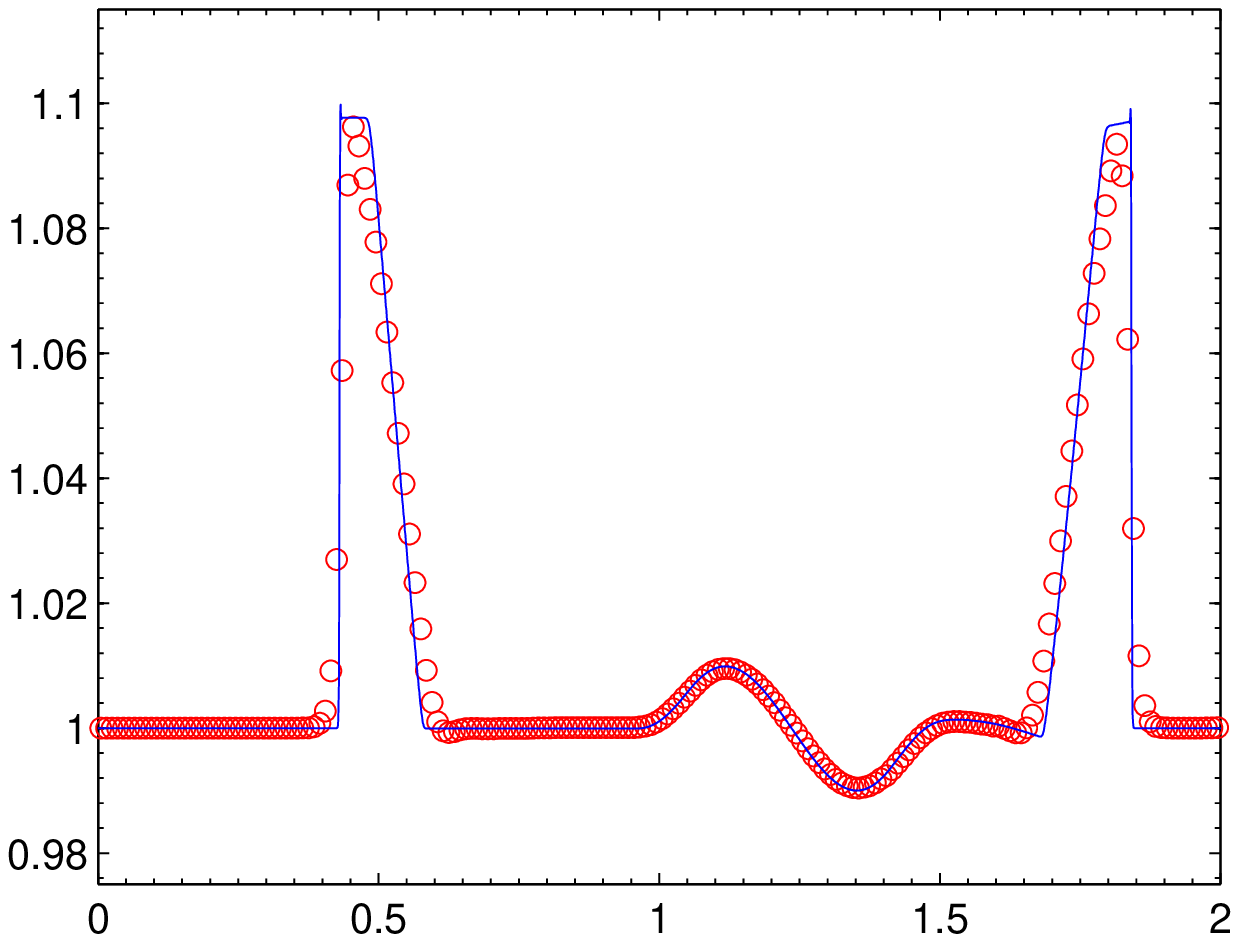}\includegraphics[width=2.65in]{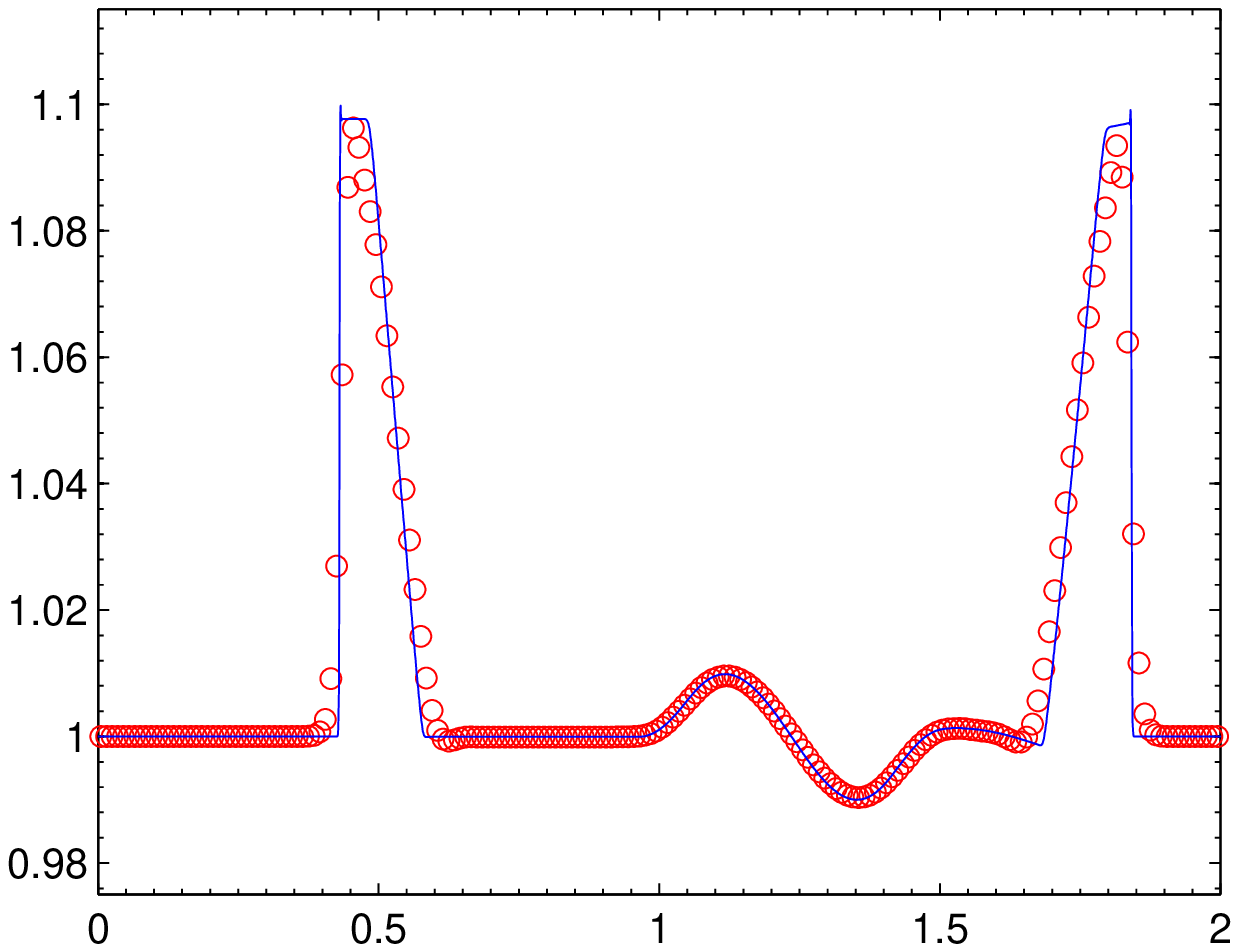}}
\centerline{\includegraphics[width=2.65in]{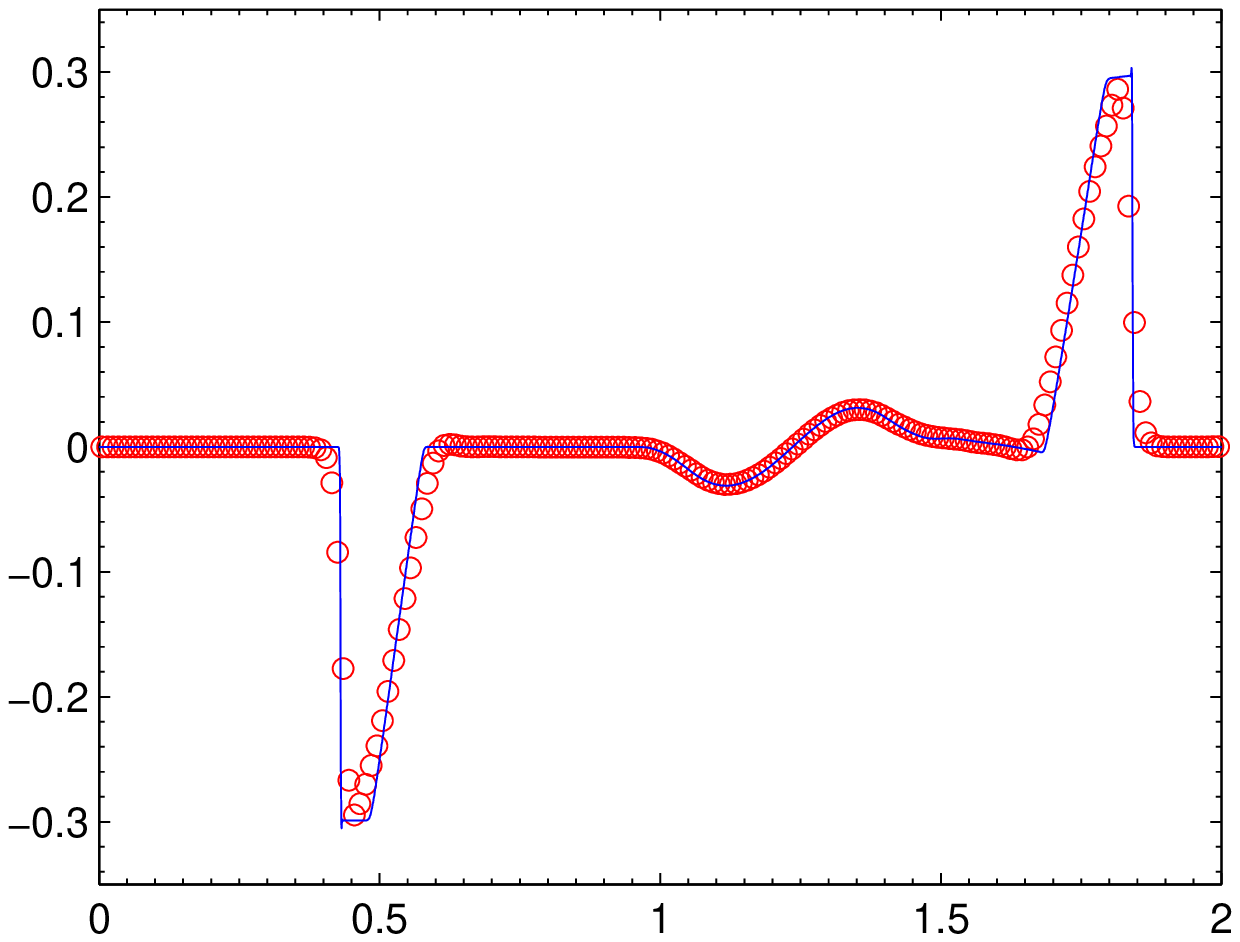}\includegraphics[width=2.65in]{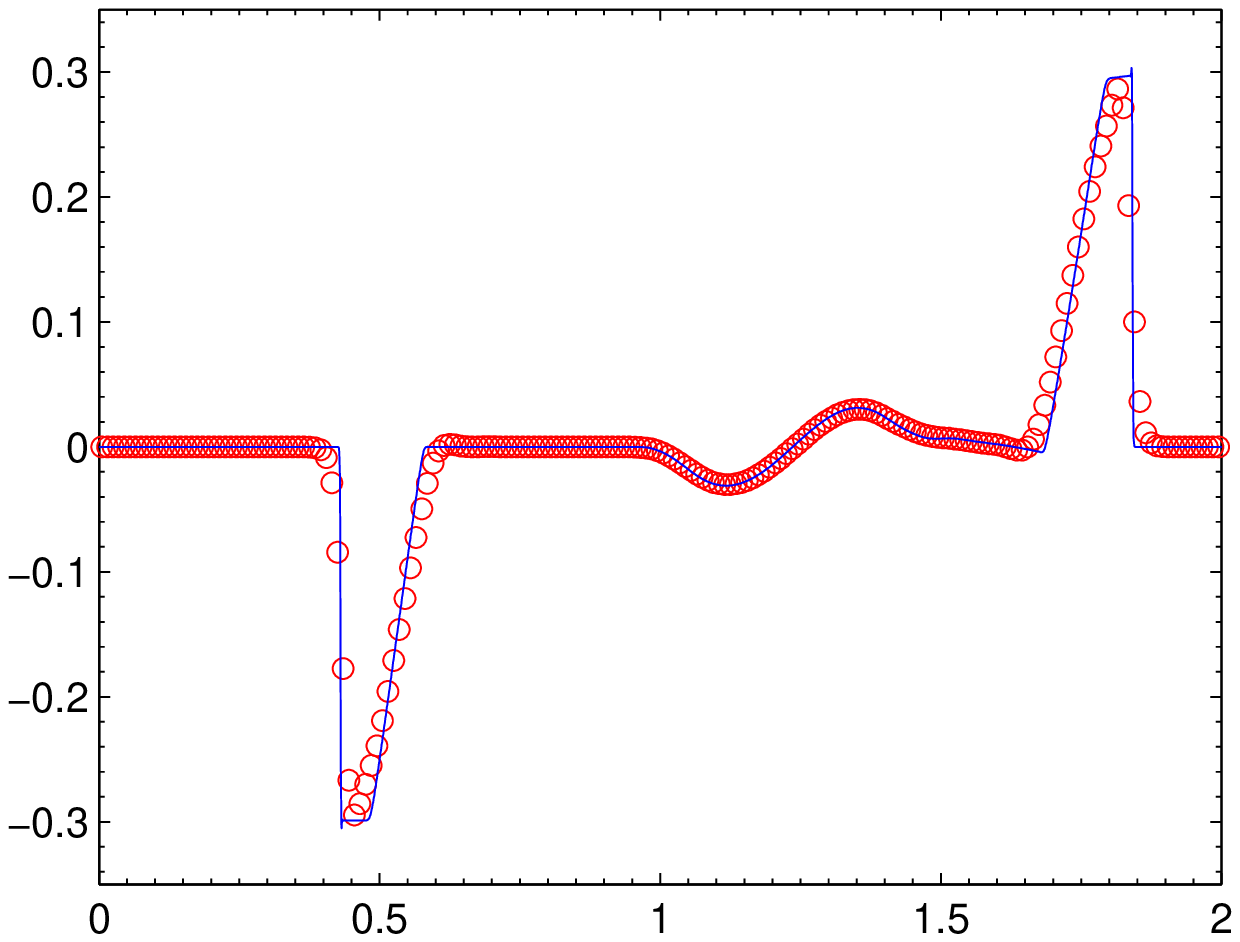}}
\caption{\sf Example \ref{ex73}: $\varepsilon=0.2$ (relatively large perturbation). Solutions ($w$ in the top row, $hu$ in the bottom row)
computed by the non-well-balanced (left column) and well-balance (right column) CSOC using uniform grids with 200 (circles) and 3000 (solid
line, reference solution) cells.\label{fig71}}
\end{figure}
\begin{figure}[ht!]
\centerline{\includegraphics[width=2.65in]{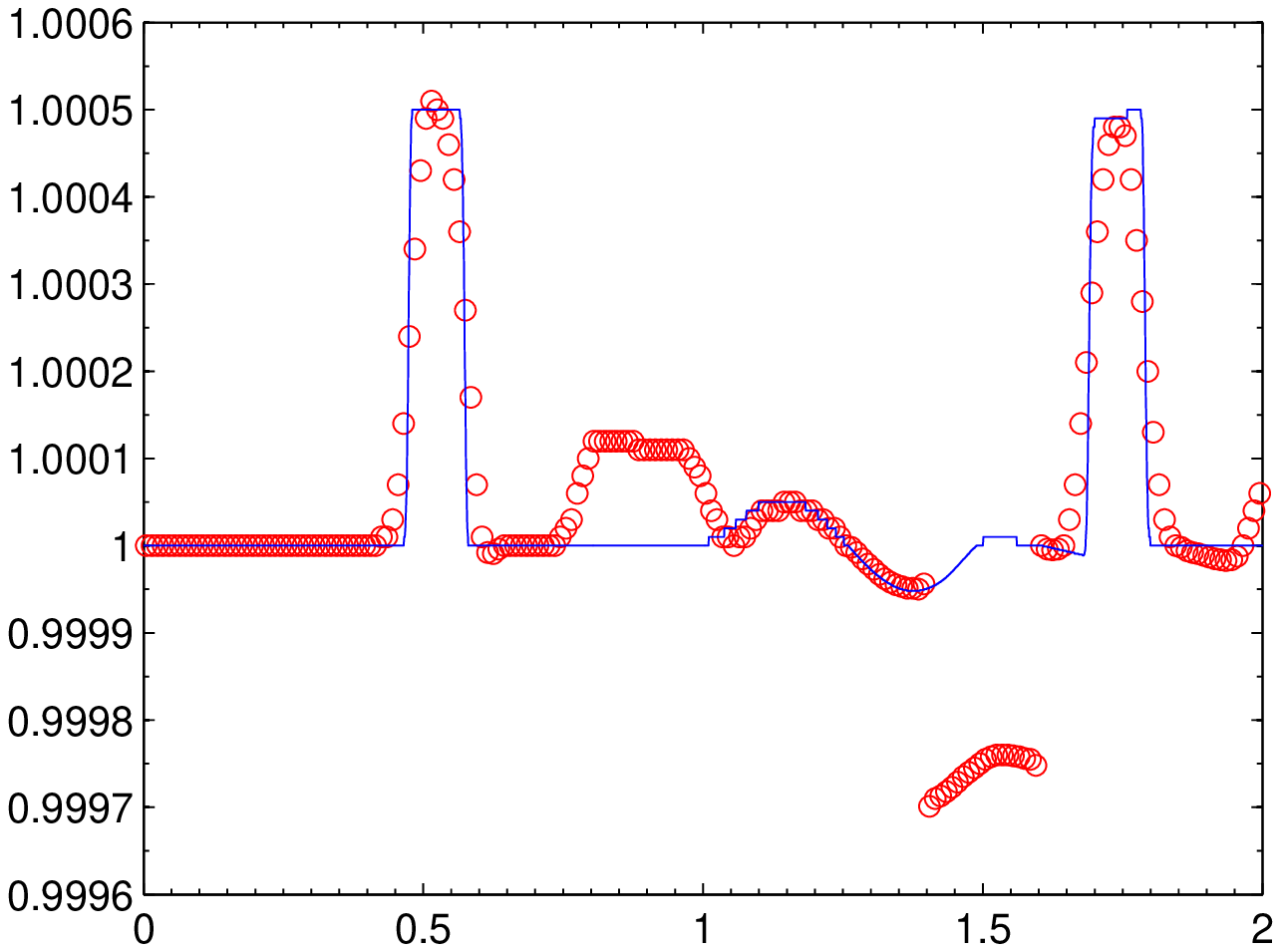}\includegraphics[width=2.65in]{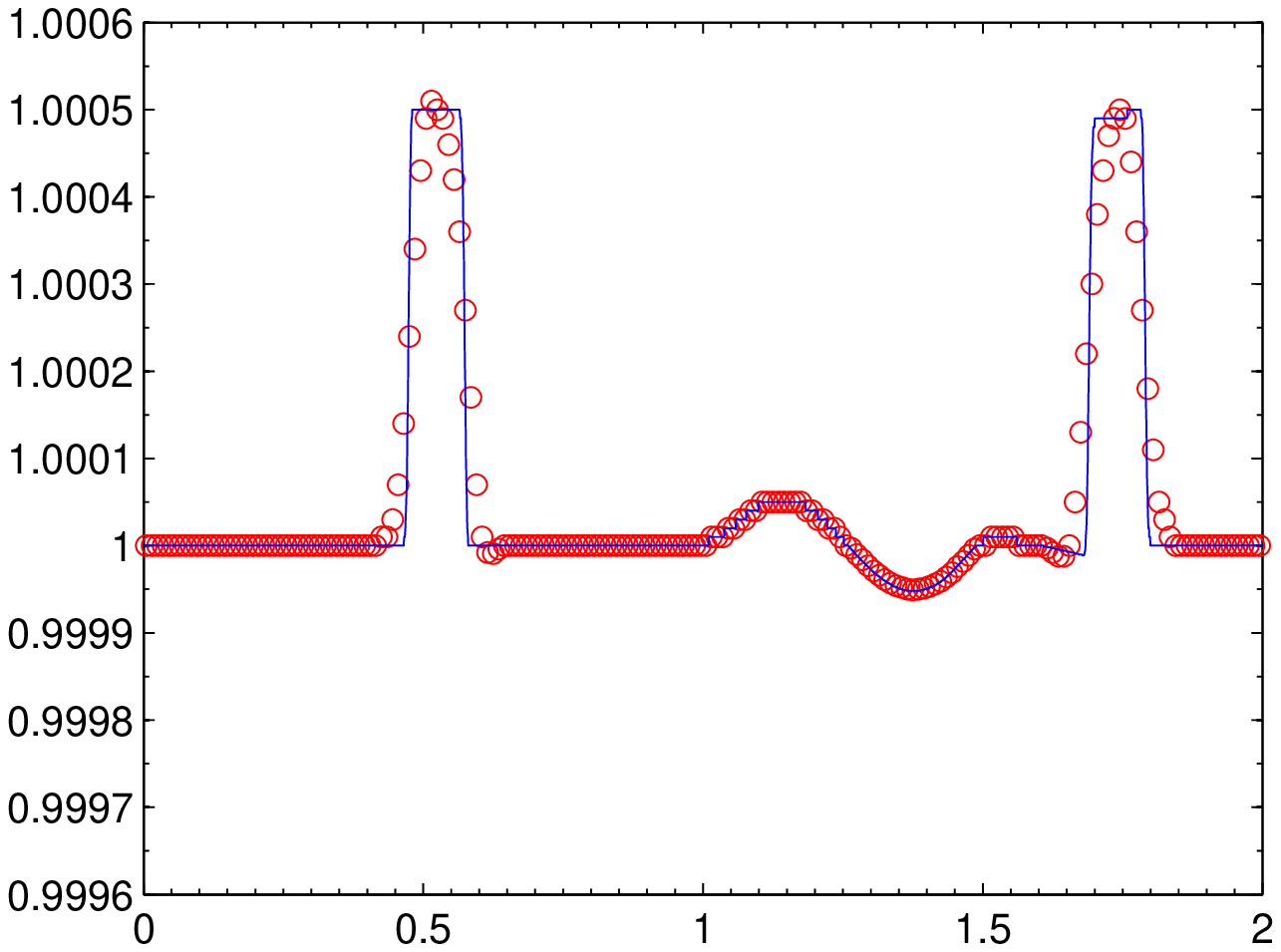}}
\centerline{\includegraphics[width=2.65in]{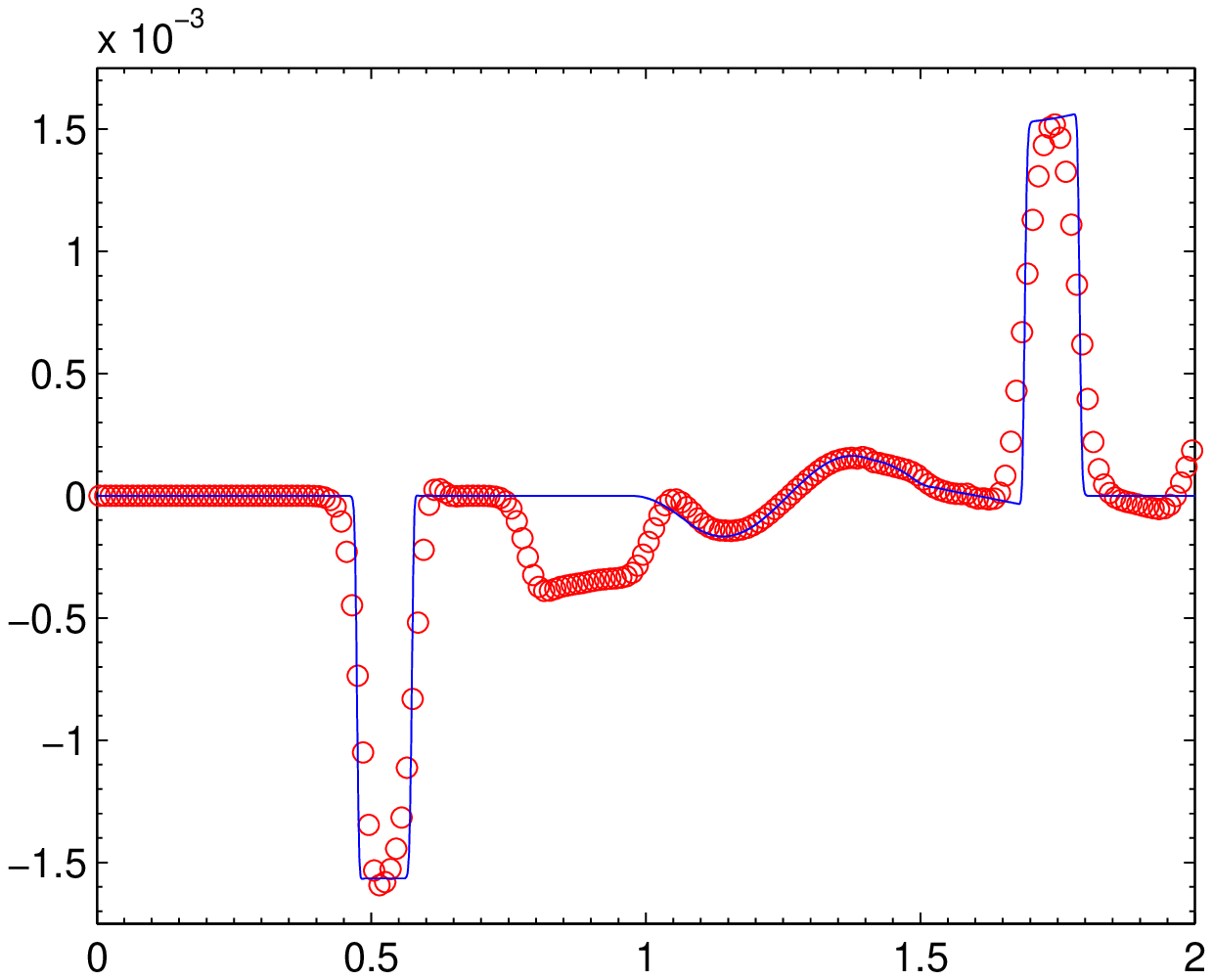}\includegraphics[width=2.65in]{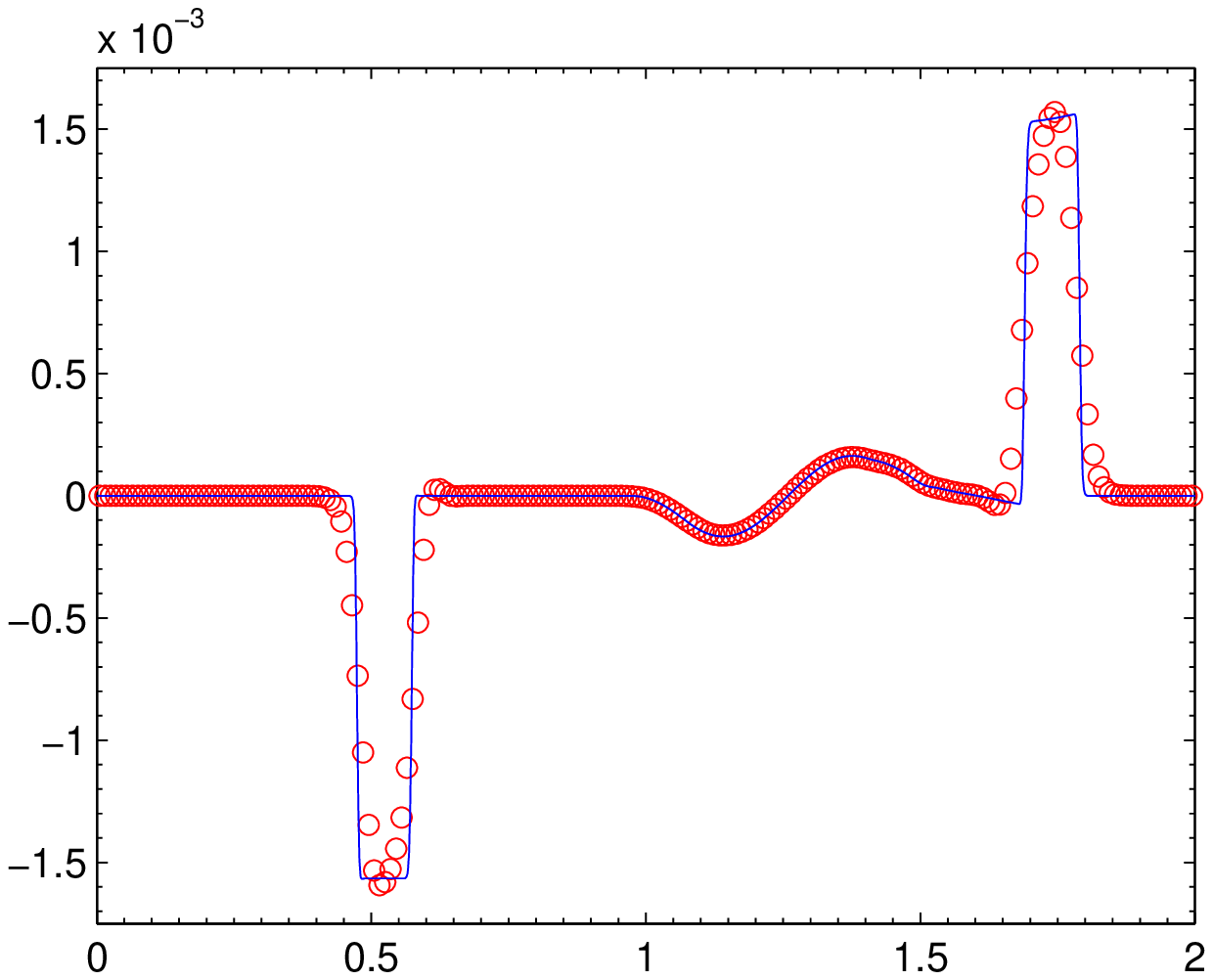}}
\caption{\sf Example \ref{ex73}: $\varepsilon=0.001$ (smaller perturbation). Solutions ($w$ in the top row, $hu$ in the bottom row) computed
by the non-well-balanced (left column) and well-balanced (right column) CSOC using uniform grids with 200 (circles) and 3000 (solid line,
reference solution) cells.\label{fig72}}
\end{figure}
\end{example}

\begin{example}[Dam Break over a Discontinuous Bottom]\label{ex74}
This problem is taken from \cite{Manning02} and \cite{XingShu05} to simulate a fast changing flow over a nonsmooth bottom. The computational
domain is $0\le x\le1500$, the initial data are
\begin{equation*}
(hu)(x,0)\equiv0,\quad w(x,0)=\left\{\begin{aligned}&20,&&\mbox{if}~x\le750,\\&15,&&\mbox{otherwise},\end{aligned}\right.
\end{equation*}
and absorbing boundary conditions are used at both ends of the computational domain. The bottom topography contains a rectangular bump and
is given by
\begin{equation*}
B(x)=\left\{\begin{aligned}&8,&&\mbox{if}~562.5\le x\le937.5,\\&0,&&\mbox{otherwise}.\end{aligned}
\right.
\end{equation*}
We compute the numerical solutions using 500 and 5000 uniform cells at two different final times: $t=15$ (Figure \ref{fig73}) and $t=55$
(Figure \ref{fig74}). As one can clearly see, the obtained results are very accurate and practically oscillation-free.
\begin{figure}[ht!]
\centerline{\includegraphics[width=2.65in]{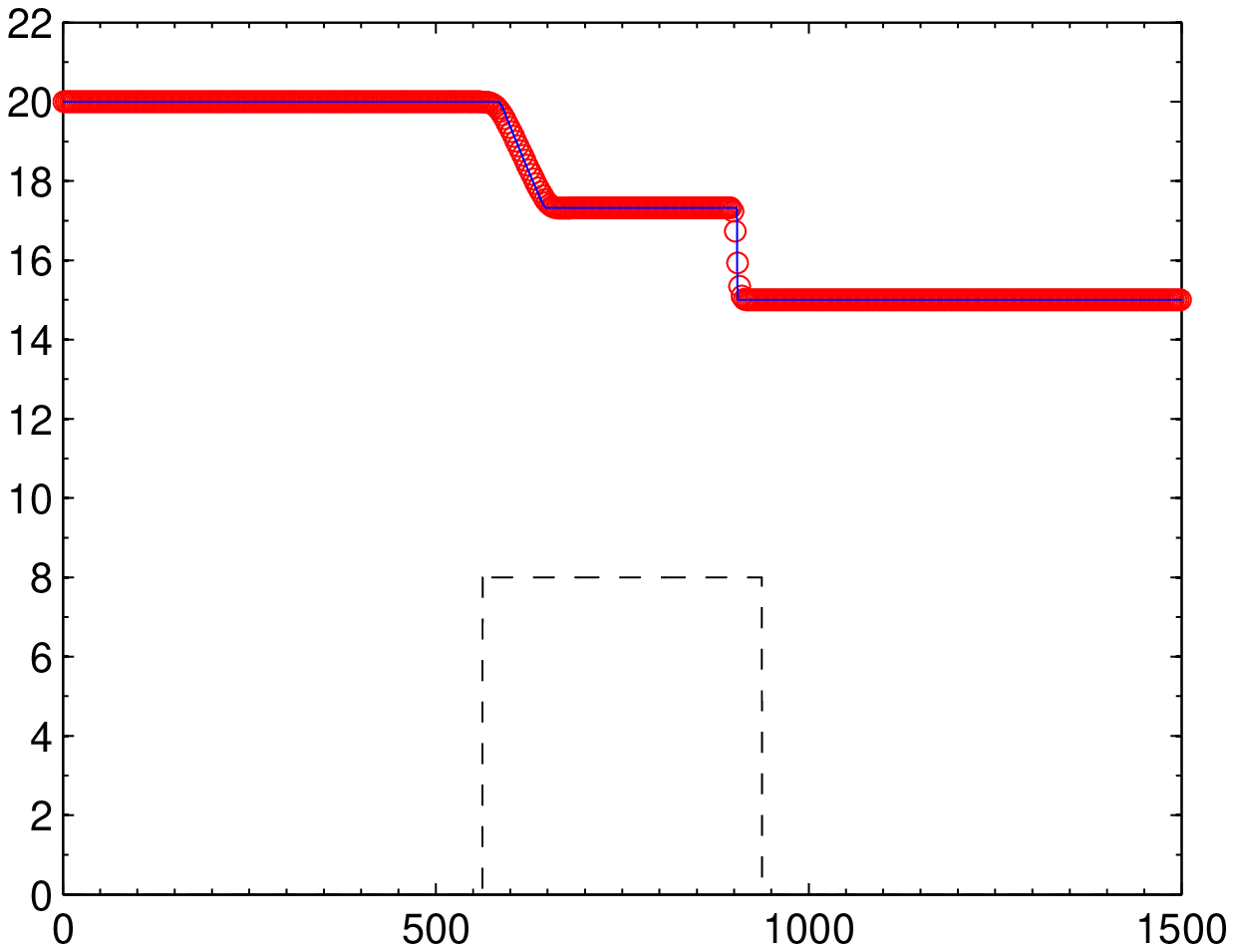}\includegraphics[width=2.65in]{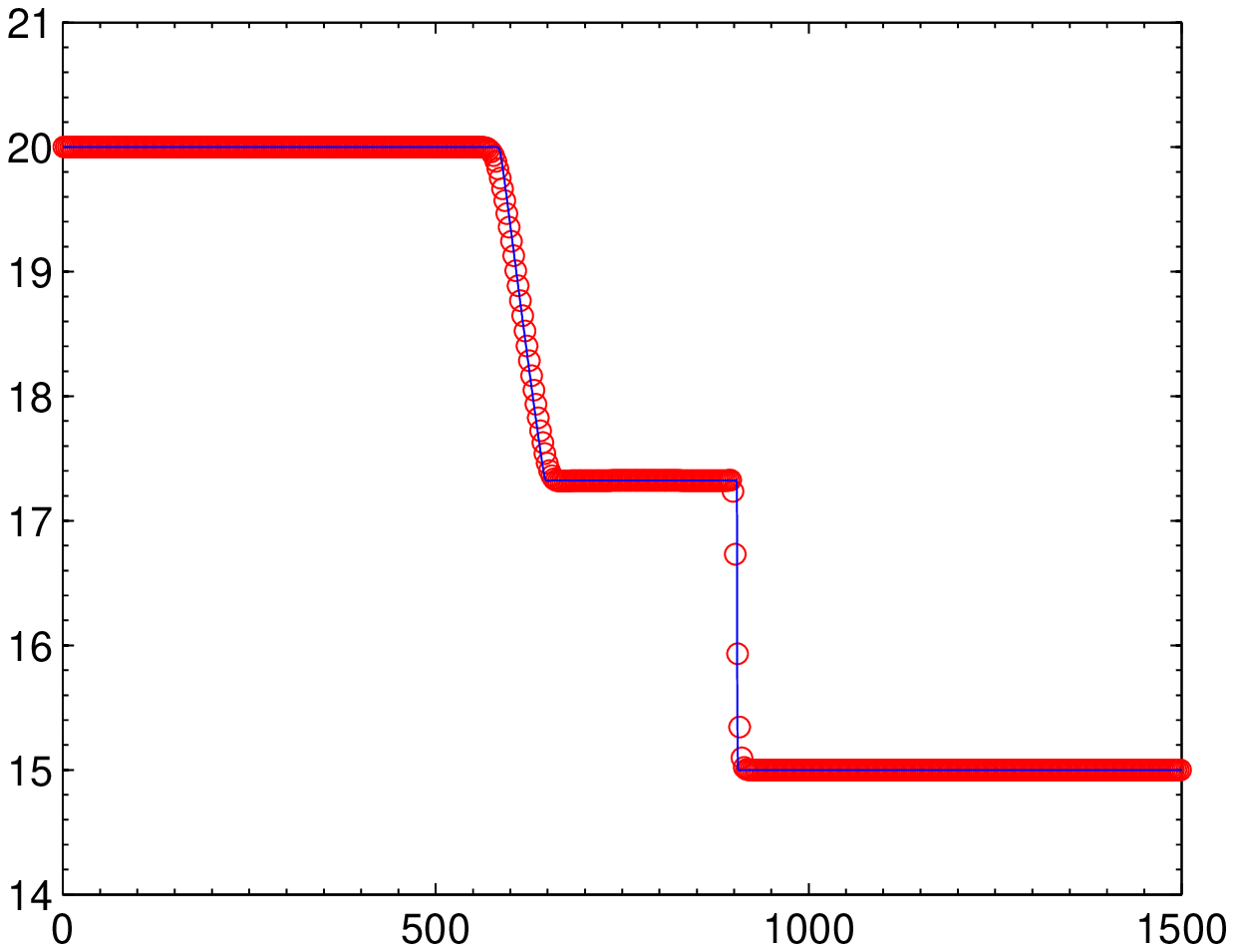}}
\centerline{\includegraphics[width=2.65in]{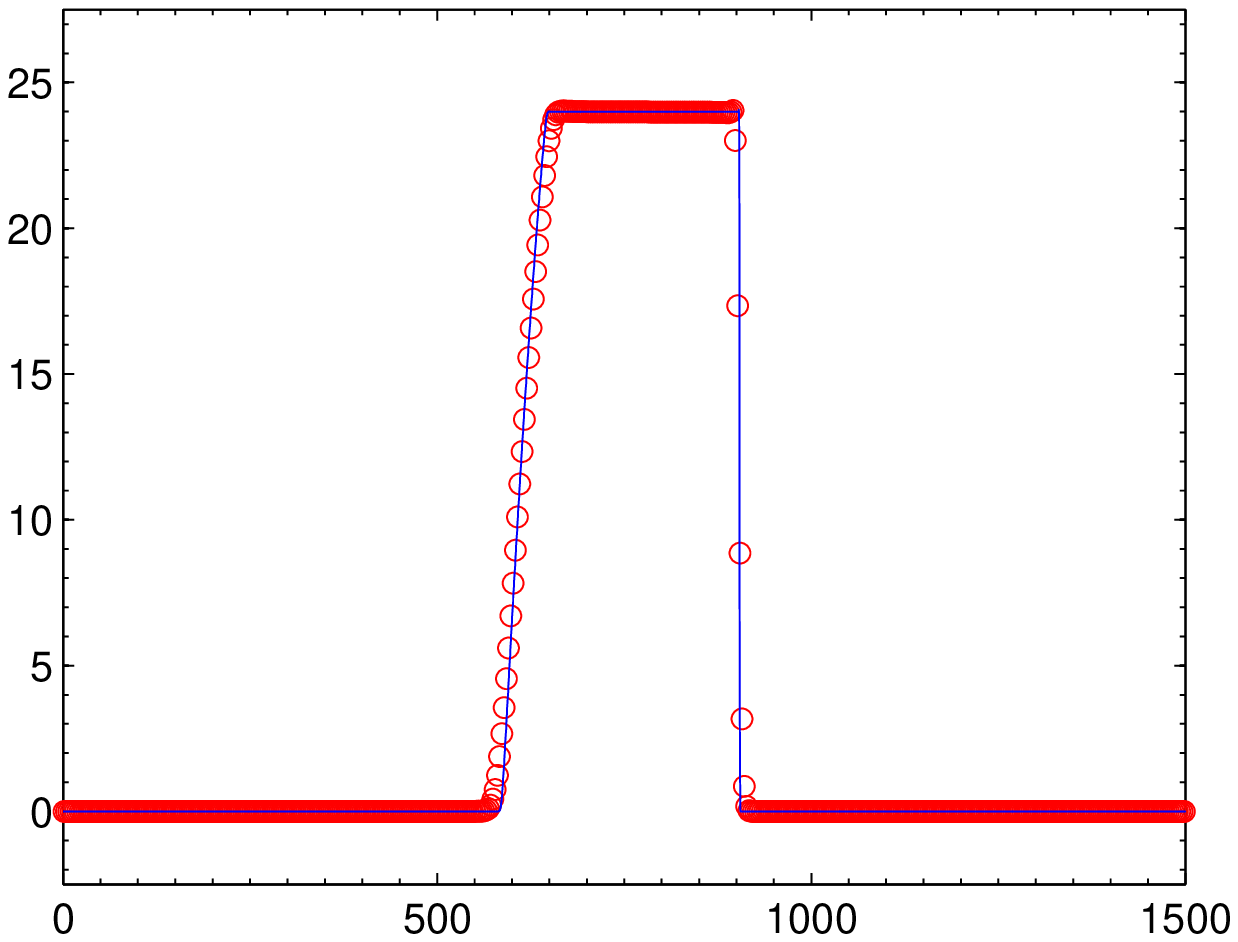}\includegraphics[width=2.65in]{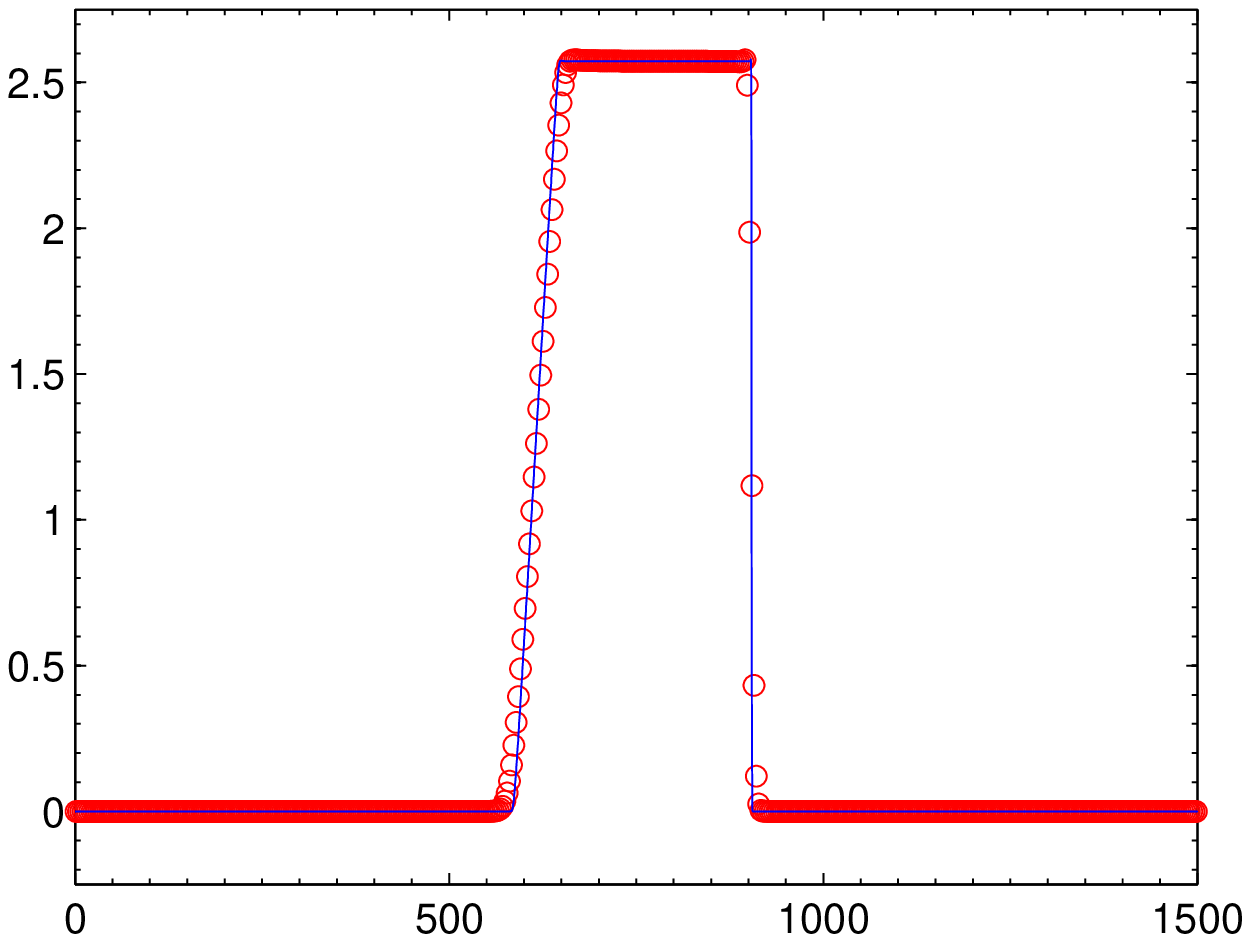}}
\caption{\sf Example \ref{ex74}: $w(x,15)$ together with $B(x)$ (top left), $w(x,15)$ (top right), $hu(x,15)$ (bottom left) and $u(x,15)$
(bottom right), computed using uniform grids with 500 (circles) and 5000 (solid line) cells. The bottom topography $B$ is plotted with the
dashed line.\label{fig73}}
\end{figure}
\begin{figure}[ht!]
\centerline{\includegraphics[width=2.65in]{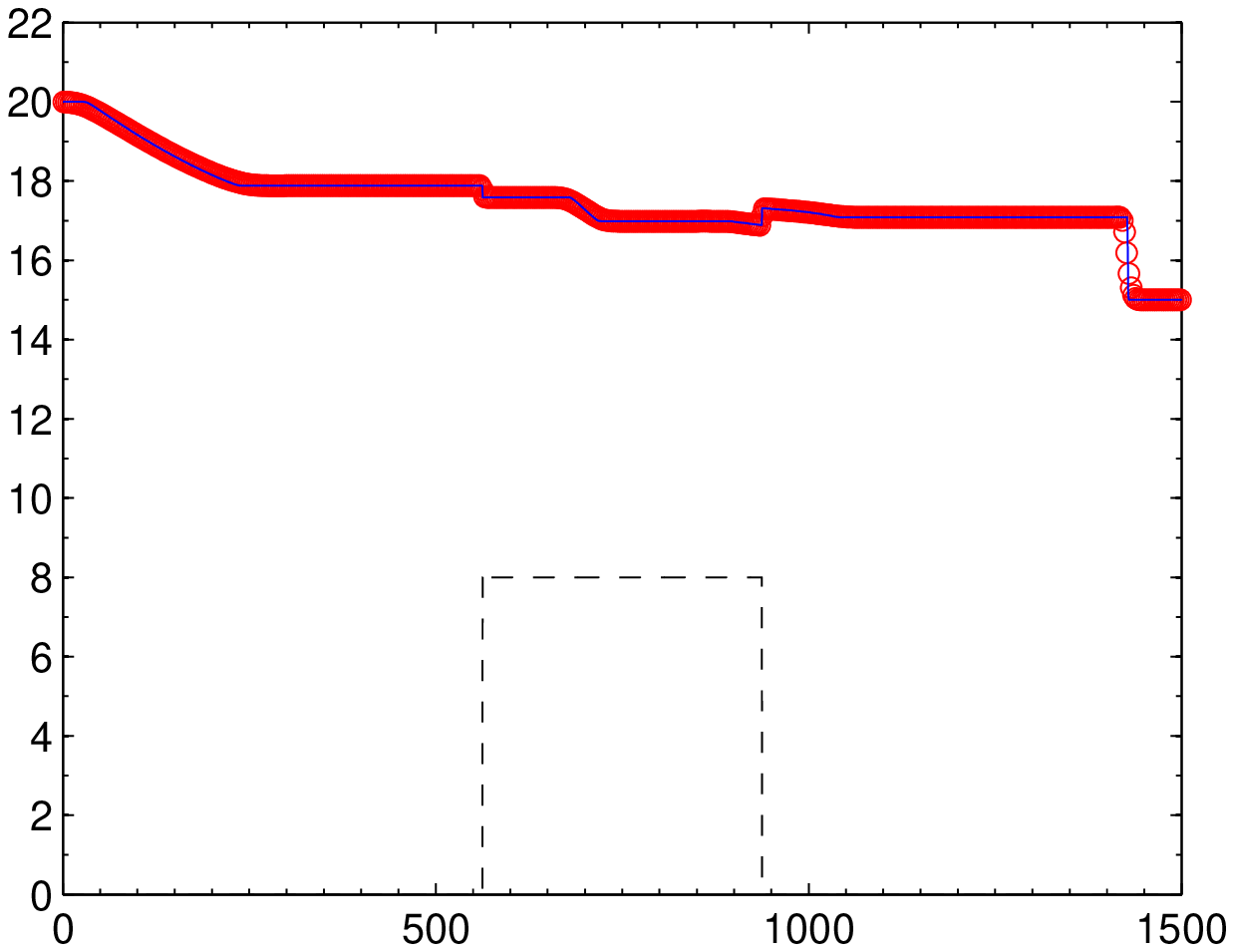}\includegraphics[width=2.65in]{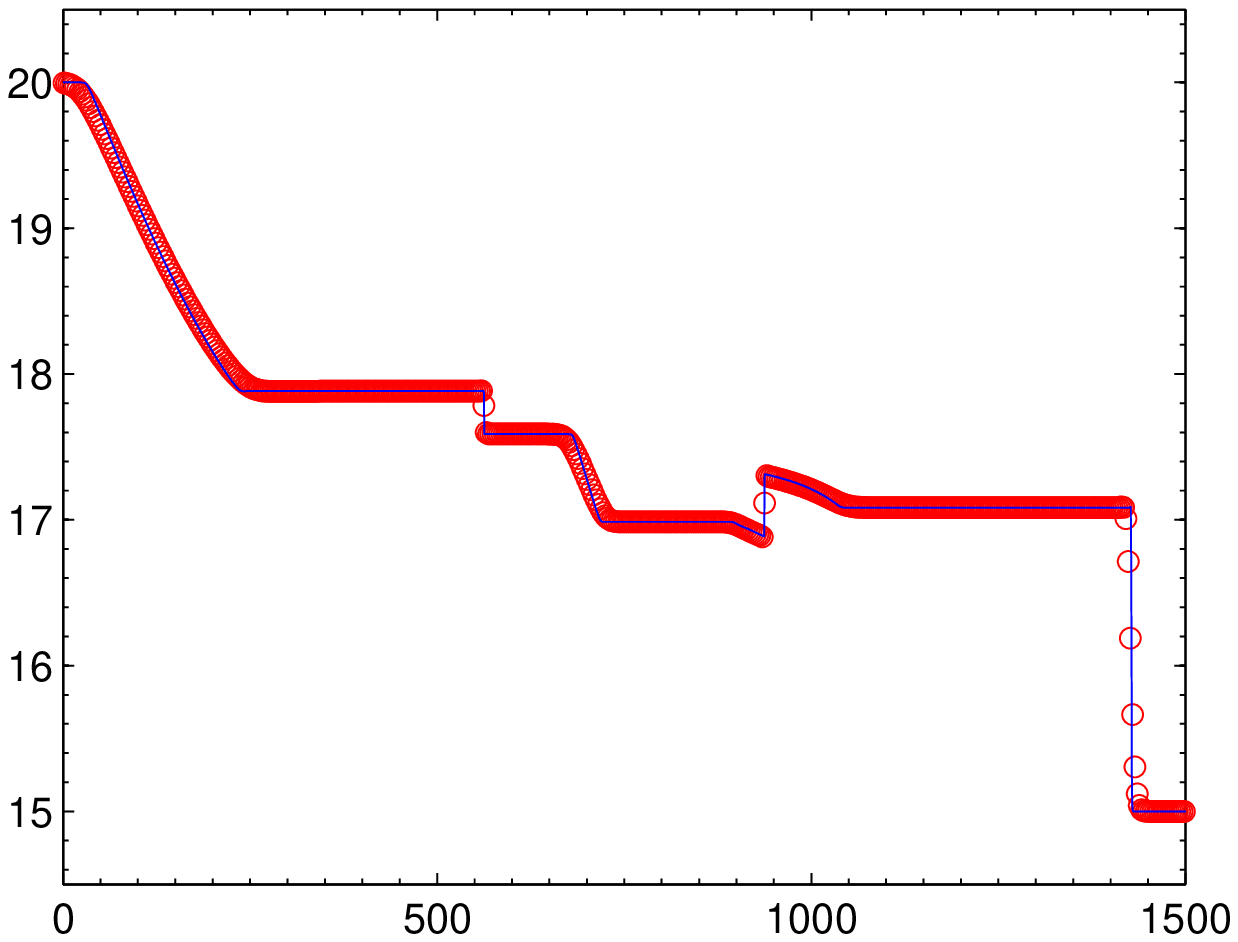}}
\centerline{\includegraphics[width=2.65in]{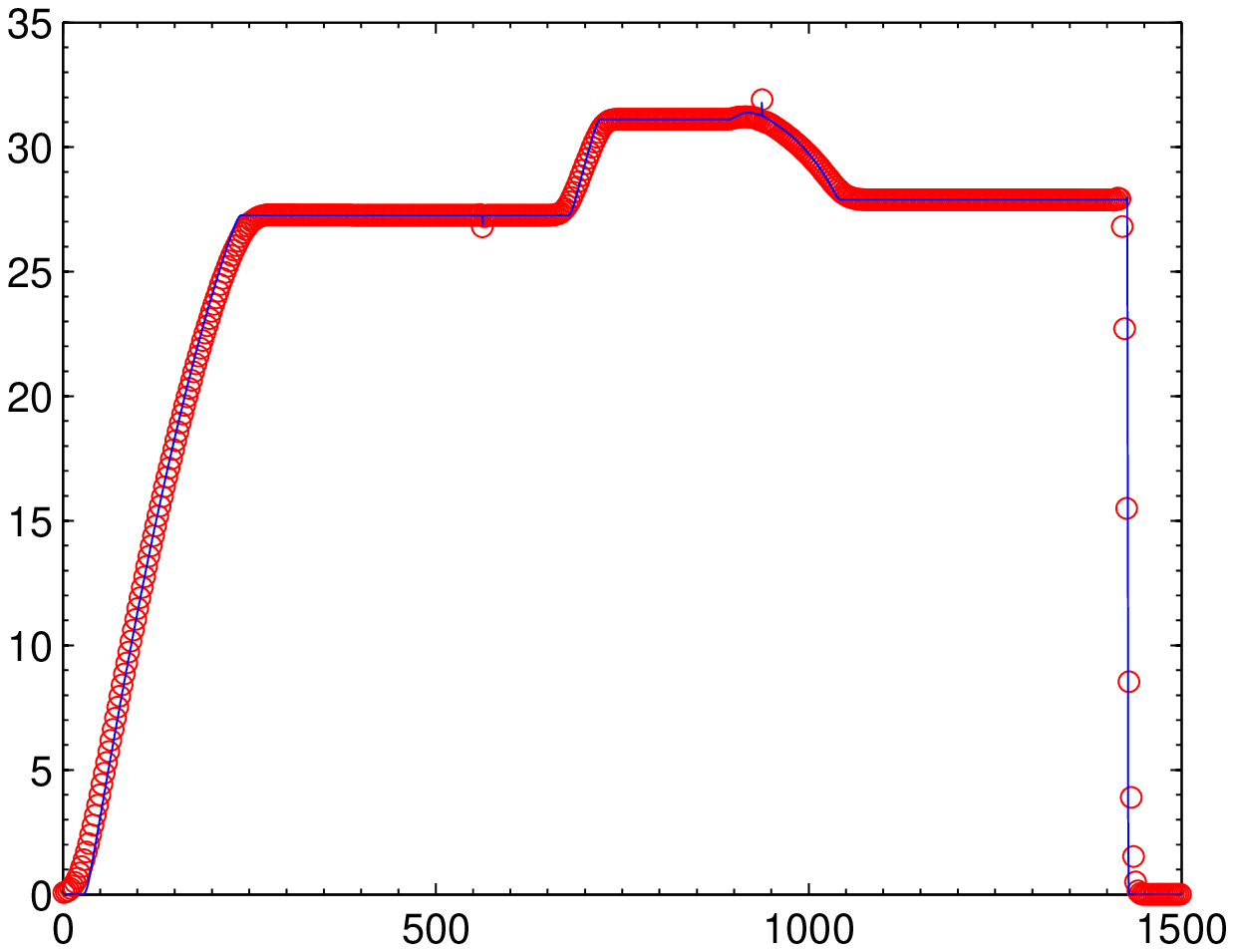}\includegraphics[width=2.65in]{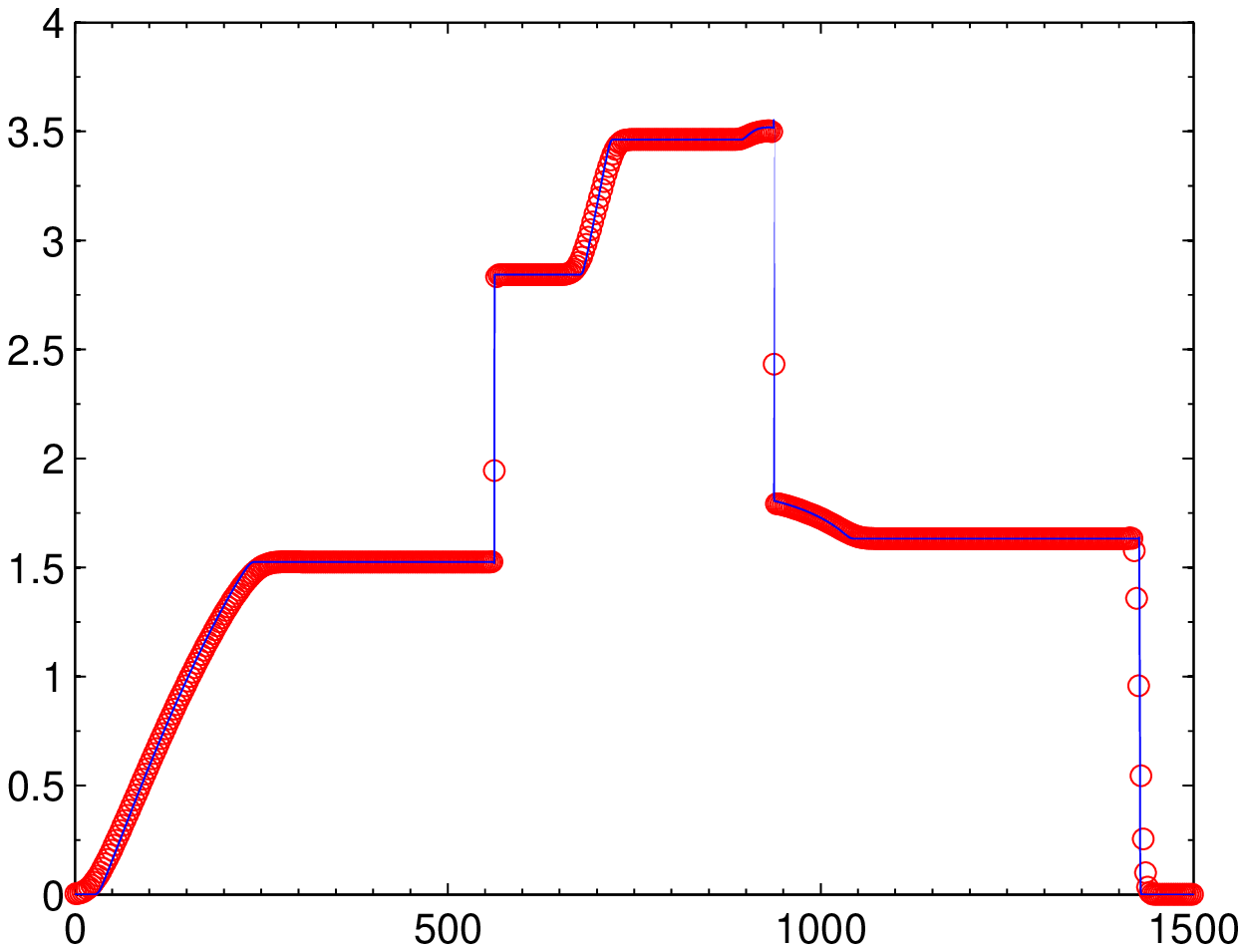}}
\caption{\sf Example \ref{ex74}: $w(x,55)$ together with $B(x)$ (top left), $w(x,55)$ (top right), $hu(x,55)$ (bottom left) and $u(x,55)$
(bottom right), computed using uniform grids with 500 (circles) and 5000 (solid line) cells. The bottom topography $B$ is plotted with the
dashed line.\label{fig74}}
\end{figure}
\begin{remark}
In this problem, the bottom topography $B$ is a discontinuous step function, which needs to be treated carefully due to the appearance of
the Dirac delta function in the source term $g[\xbar w(t)-w]B_x$. Therefore, the source term must be treated in a special way. We follow the
approach in \cite{KurganovPetrova07} and replace $B$ with its continuous piecewise linear approximation,
\begin{equation*}
\widetilde B(x)=B_{i-\frac{1}{2}}+(B_{i+\frac{1}{2}}-B_{i-\frac{1}{2}})\cdot\frac{x-x_{i-\frac{1}{2}}}{\Delta x},\quad
\forall x\in C_i,\ \forall i,
\end{equation*}
where
$$
B_{i+\frac{1}{2}}:=\frac{B(x_{i+\frac{1}{2}}+0)+B(x_{i+\frac{1}{2}}-0)}{2}.
$$
Notice that $\widetilde B\to B$ as $\Delta x\to0$.
\end{remark}
\end{example}

\begin{example}[Saint-Venant System with Manning's Friction]\label{ex76a}
In this example, we consider the 1-D Saint-Venant system with Manning's friction term (see, e.g., \cite{Fla,Manning02}):
\begin{equation}
\left\{\begin{aligned}&h_t+(hu)_x=0,\\&(hu)_t+\Big(hu^2+\frac{1}{2}gh^2\Big)_x=-ghB_x-g\frac{M^2}{h^{1/3}}u|u|,\end{aligned}\right.
\label{6.1}
\end{equation}
where $M=M(x)$ is a given Manning's friction coefficient.

We note that in addition to the ``lake at rest'' steady states, the system \eqref{6.1} admits another physically relevant set of
steady-state solutions corresponding to the water flowing down a slanted surface of a constant slope (see, e.g., \cite{CGP10,CVC12,CCKW}).
However, in this paper, we only consider the ``lake at rest'' steady states and therefore, the equilibrium variables are the same as for the
original Saint-Venant system \eqref{1DSV}, namely, $\bm{a}:=(w,hu)^T$.

We now apply Algorithm \ref{alg31} and rewrite the system \eqref{6.1} as
\begin{equation}
\left\{\begin{aligned}&w_t+(hu)_x=0,\\&(hu)_t+\bigg(\frac{(hu)^2}{w-B}+g[\xbar w(t)-w]B+\frac{g}{2}w^2\bigg)_x=g[\xbar w(t)-w]B_x-
g\frac{M^2(hu)|hu|}{(w-B)^{7/3}},\end{aligned}\right.
\label{6.2}
\end{equation}
and obtain a well-balanced scheme by a direct application of the CSOC to \eqref{6.2}. To illustrate the performance of the resulting scheme,
we follow \cite{Manning02} and consider the same setting as in Example \ref{ex74}, but with Manning's friction term with $M(x)\equiv0.1$.
The solutions computed at times $t=15$ and $t=55$ are shown in Figures \ref{fig76a} and \ref{fig77a}, respectively. As one can clearly see,
the obtained results are well-resolved and almost non-oscillatory, and the coarse and fine grid solutions are in a very good agreement.
\begin{figure}[ht!]
\centerline{\includegraphics[width=2.65in]{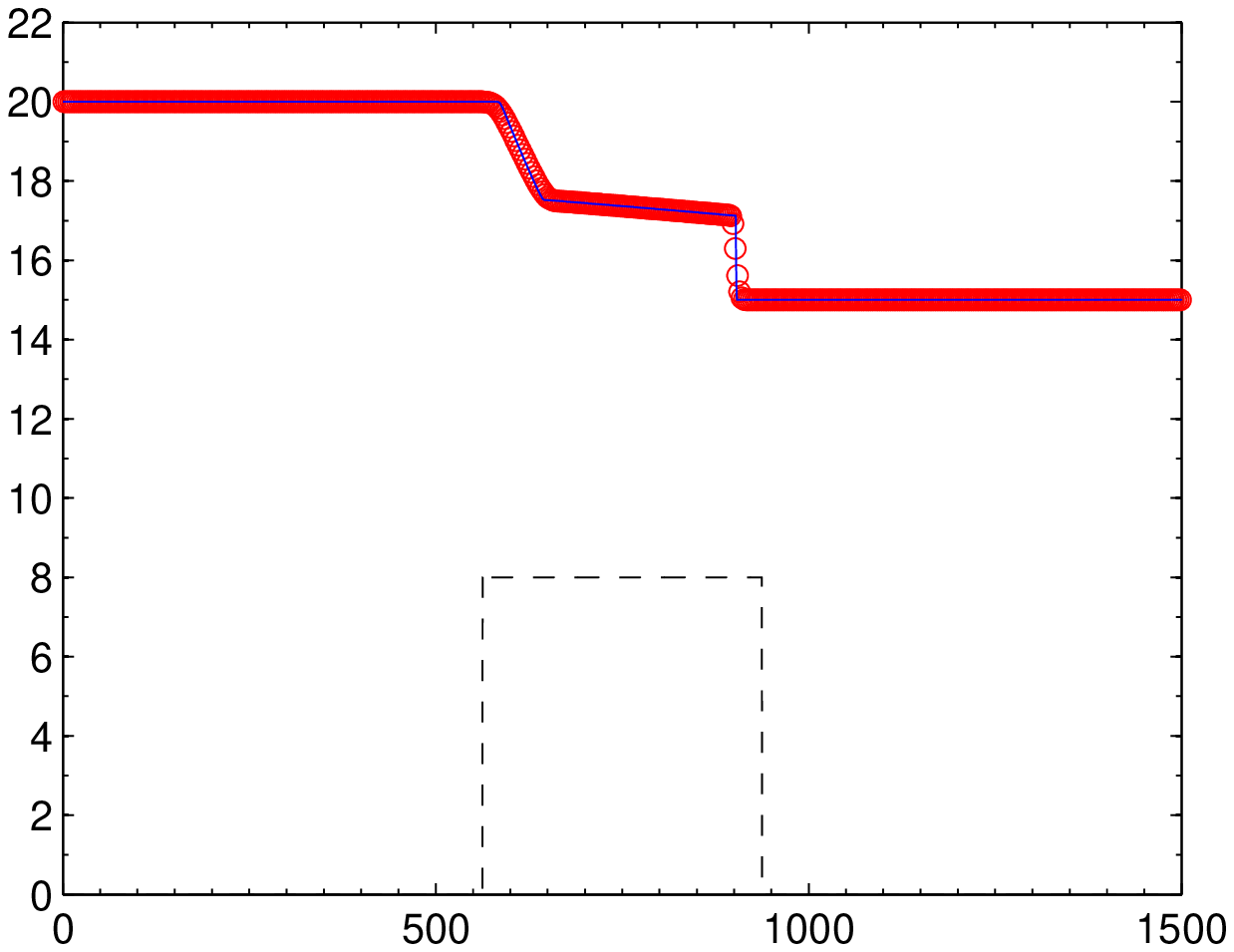}\includegraphics[width=2.65in]{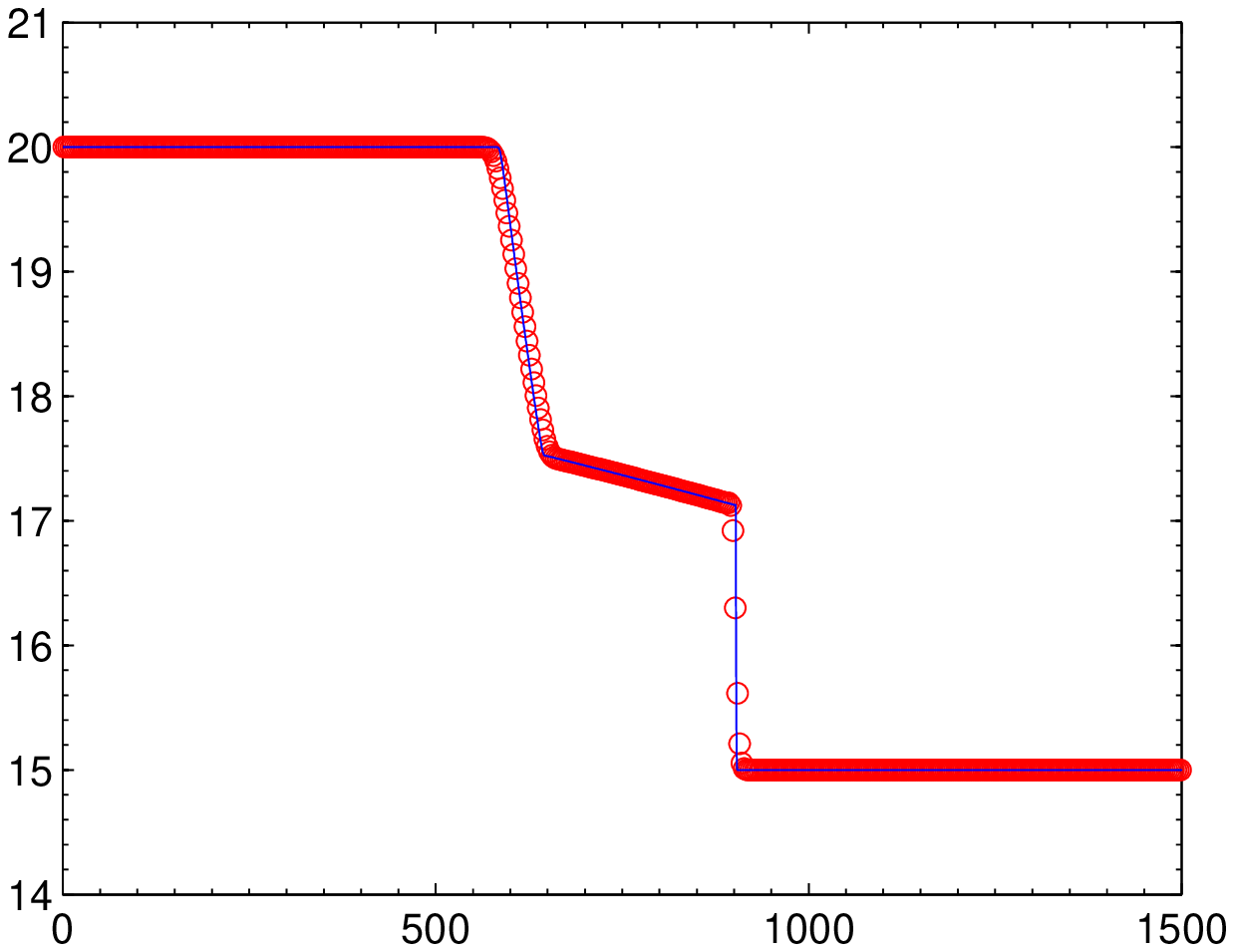}}
\centerline{\includegraphics[width=2.65in]{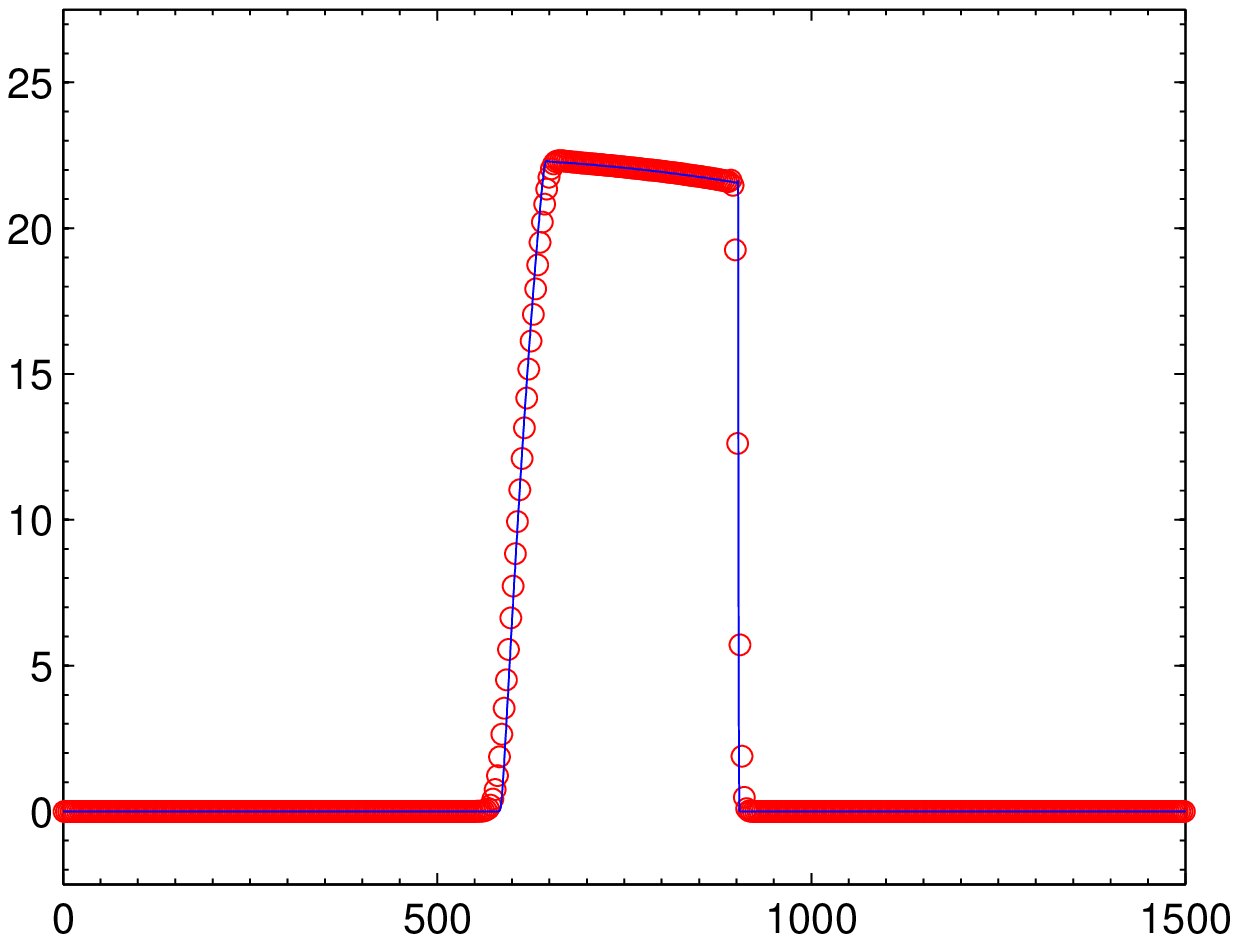}\includegraphics[width=2.65in]{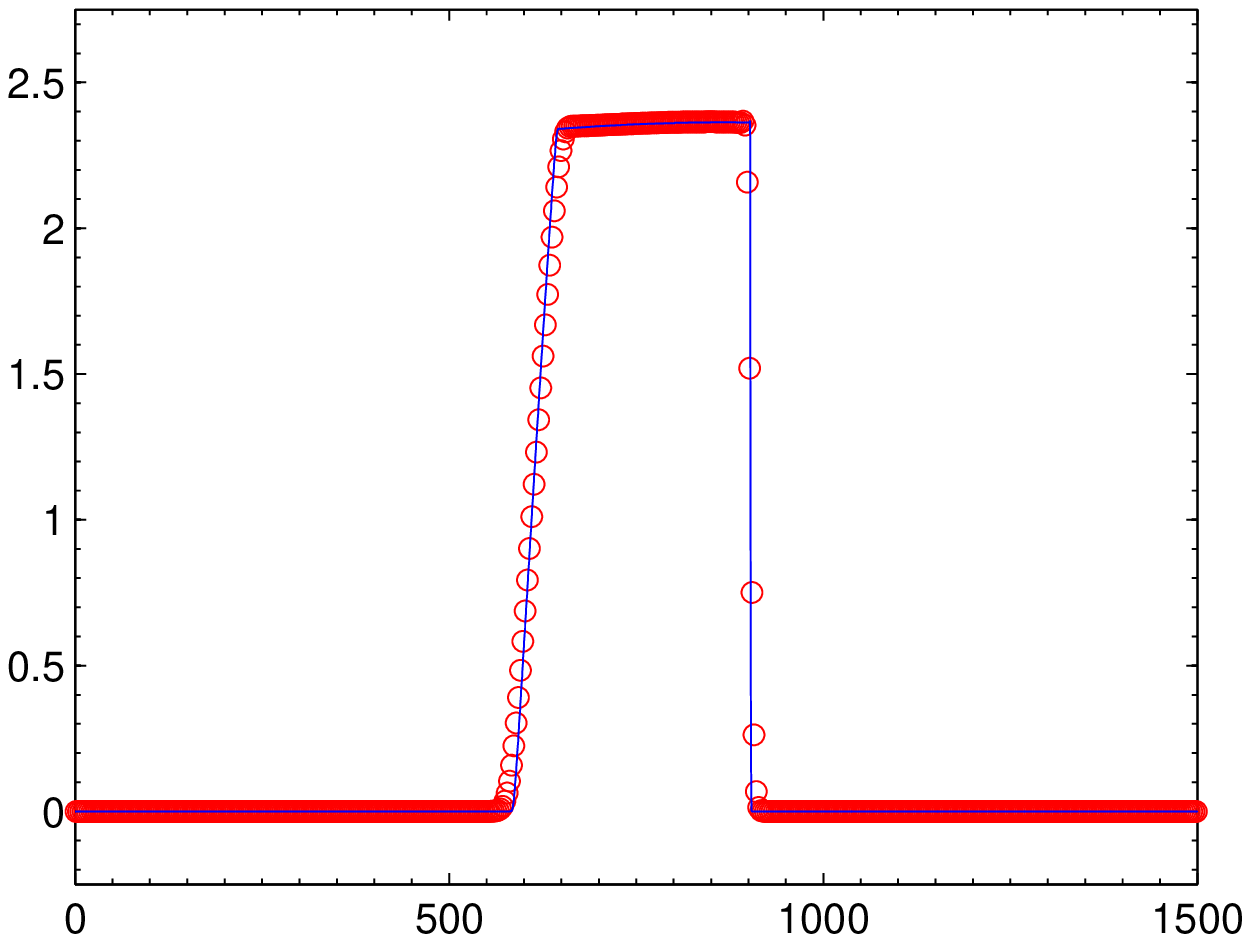}}
\caption{\sf Example \ref{ex76a}: $w(x,15)$ together with $B(x)$ (top left), $w(x,15)$ (top right), $hu(x,15)$ (bottom left) and $u(x,15)$
(bottom right), computed using uniform grids with 500 (circles) and 5000 (solid line) cells. The bottom topography $B$ is plotted with the
dashed line.\label{fig76a}}
\end{figure}
\begin{figure}[ht!]
\centerline{\includegraphics[width=2.65in]{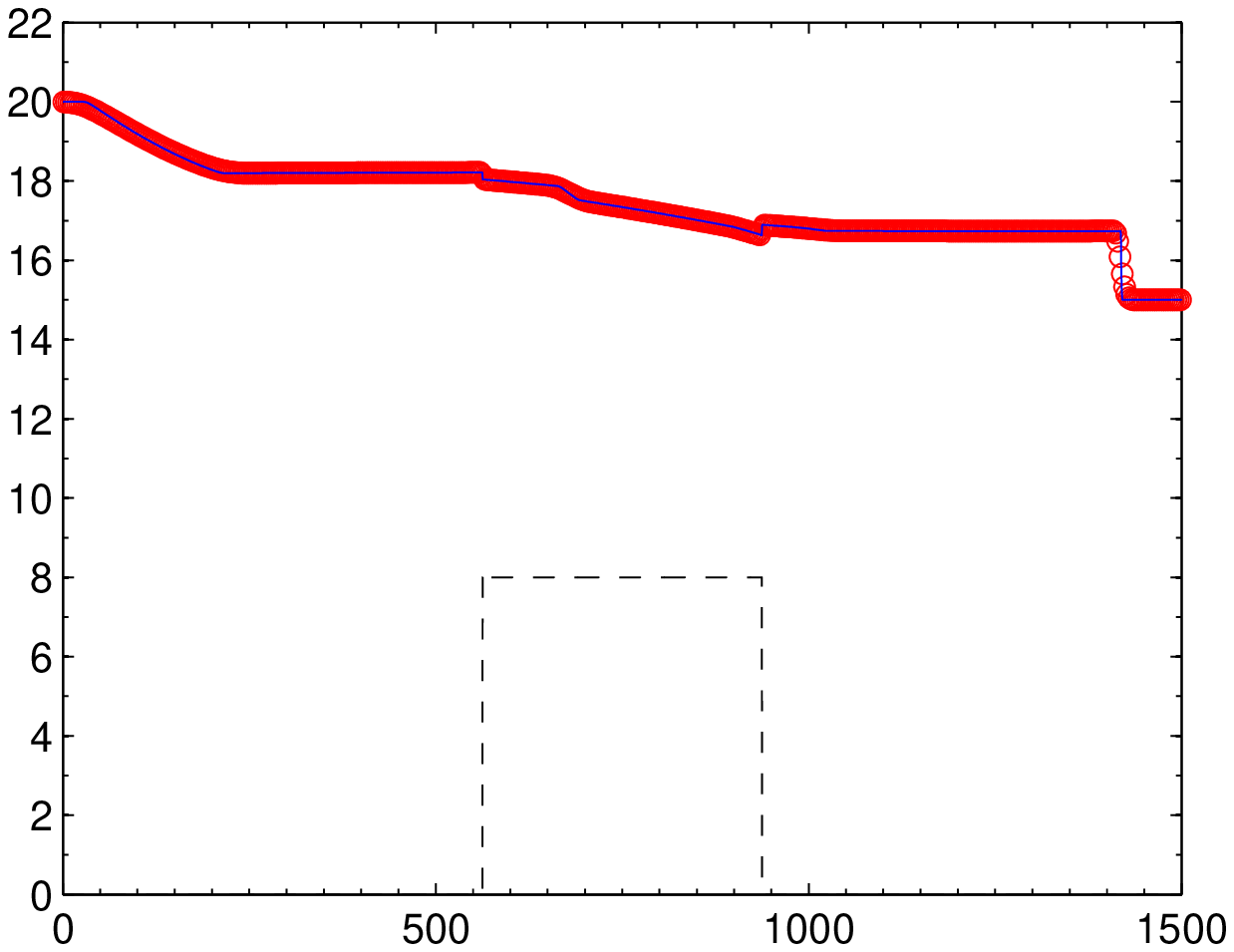}\includegraphics[width=2.65in]{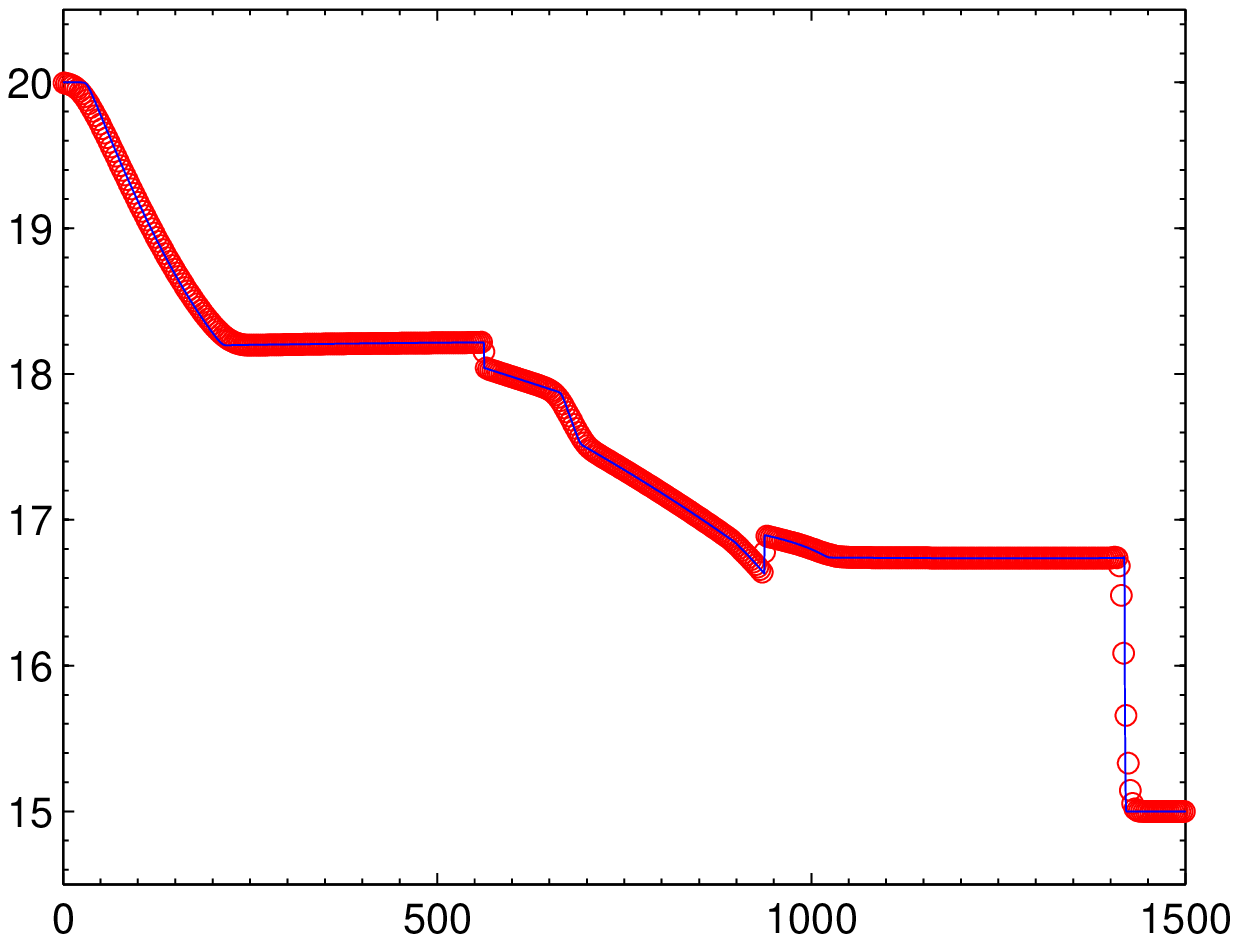}}
\centerline{\includegraphics[width=2.65in]{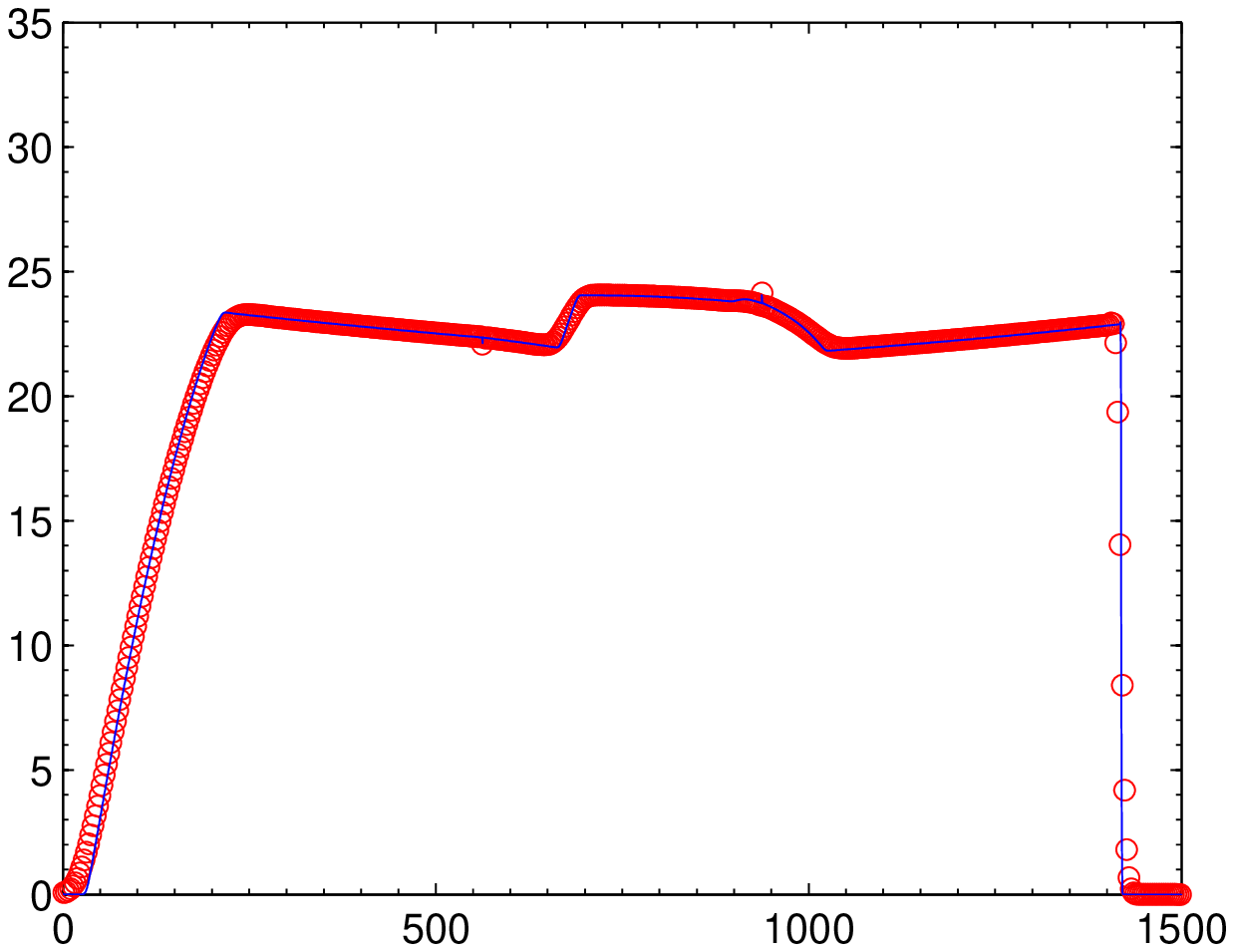}\includegraphics[width=2.65in]{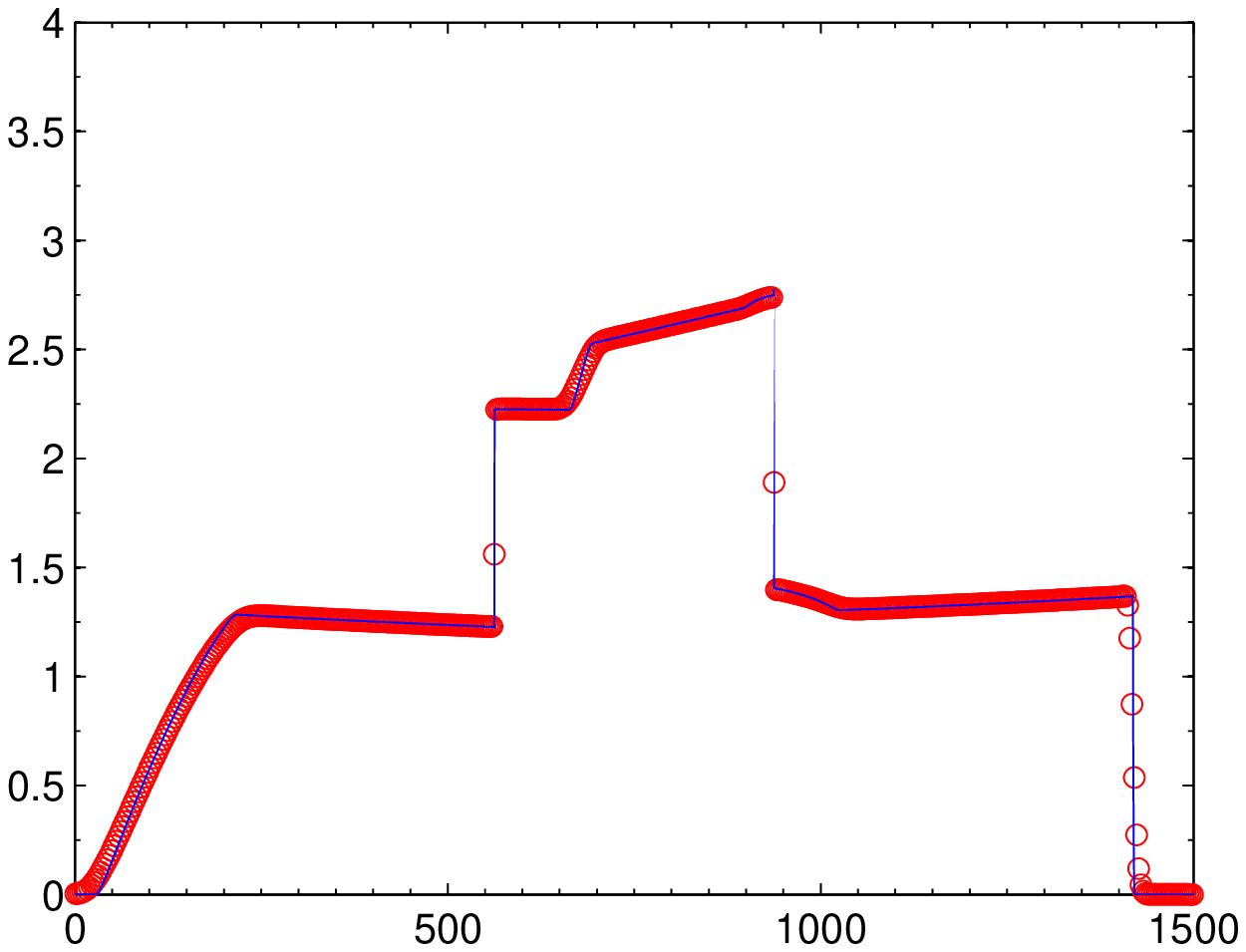}}
\caption{\sf Example \ref{ex76a}: $w(x,55)$ together with $B(x)$ (top left), $w(x,55)$ (top right), $hu(x,55)$ (bottom left) and $u(x,55)$
(bottom right), computed using uniform grids with 500 (circles) and 5000 (solid line) cells. The bottom topography $B$ is plotted with the
dashed line.\label{fig77a}}
\end{figure}
\end{example}

We would like to point out that Manning's friction is only a damping term which does not smear the discontinuities. Compared with the
numerical results in Example \ref{ex74}, one can see that the effect of Manning's friction is that the original horizontal line above the
bump becomes oblique and the velocity magnitude decreases, which are typical effects of a damping term. Our results are in good agreements
with the results reported in \cite{Manning02}, where exactly the same phenomenon has been observed.

\begin{example}[Steady Flows over a Hump]\label{ex75}
In this example, we study steady states with the nonzero discharge $hu$. The properties of such flows depend on the bottom topography and
free-stream Froude number $Fr=u/\sqrt{gh}$. If $Fr<1$ (subcritical flow) or $Fr>1$ (supercritical flow) everywhere, then the steady-state
solution will be smooth. Otherwise, the flow is transcritical with transitions at the points where $Fr$ passes through 1, and thus one of
the eigenvalues $u\pm\sqrt{gh}$ of the Jacobian matrix passes through zero. In such case, the steady-state solution may contain a stationary
shock. Steady flows over a hump are classical benchmarks for transcritical and subcritical steady flows, and are widely used to test
numerical schemes for the shallow water system, see, for example, \cite{KurganovLevy02,LeVeque98,Vazquez99,XingShu05}.

The computational domain is $0\le x\le25$, and the initial data are
\begin{equation*}
w(x,0)\equiv0.5,\quad(hu)(x,0)\equiv0.
\end{equation*}
The bottom topography contains a hump and is given by
\begin{equation*}
B(x)=\left\{\begin{aligned}&0.2-0.05(x-10)^2,&&\mbox{if}~8\le x\le12,\\&0,&&\mbox{otherwise}.\end{aligned}
\right.
\end{equation*}
The nature of the solution depends on the boundary condition: The flow can be subcritical or transcritical with or without a stationary
shock. The final time is set to be $t=200$ by which all of the solutions reach their corresponding steady states.

\medskip
\noindent
{\em Case 1:} \underline{Subcritical Flow}.

\smallskip
\noindent
We set the following upstream and downstream boundary conditions: $(hu)(0,t)=4.42$ and $w(25,t)=2$. In Figure \ref{fig75} (left), we plot
the obtained Froude number $Fr$, which gradually increases to a large (but still smaller than 1) value above the hump and then gradually
decreases back to the original value. We compute the numerical solutions using 100 and 1000 uniform cells. As it can be seen in Figure
\ref{fig76}, the obtained solutions are in good agreement and both are non-oscillatory.

\smallskip
\noindent
{\em Case 2:} \underline{Transcritical Flow without a Stationary Shock}.

\smallskip
\noindent
We now take different upstream and downstream boundary conditions: $(hu)(0,t)=1.53$ and $w(25,t)=0.41$. In Figure \ref{fig75} (middle),
we plot the obtained Froude number $Fr$, which now gradually increases to a value greater than 1 above the hump and then remains constant.
Therefore, no stationary shocks appear on the surface. We compute the numerical solutions using 200 and 2000 uniform cells. As in the
subcritical case, the coarse and fine grid solutions are in a good agreement and both are practically non-oscillatory, see Figure
\ref{fig77}.

\smallskip
\noindent
{\em Case 3:} \underline{Transcritical Flow with a Stationary Shock}.

\smallskip
\noindent
In this case, the upstream and downstream boundary conditions are $(hu)(0,t)=0.18$ and $w(25,t)=0.33$. The obtained Froude number $Fr$ is
plotted in Figure \ref{fig75} (right). As in the previous case, the Froude number gradually increases to a value greater than 1 above the
hump, but then it jumps down to the value much smaller than 1. Therefore, a stationary shock appears on the surface. We compute the
numerical solutions, presented in Figure \ref{fig78}, using 100 and 1000 uniform cells. As in Cases 1 and 2, both solutions are almost
non-oscillatory and in a good agreement, and the stationary shock wave is sharply resolved.
\begin{figure}[ht!]
\centerline{\includegraphics[width=2.3in]{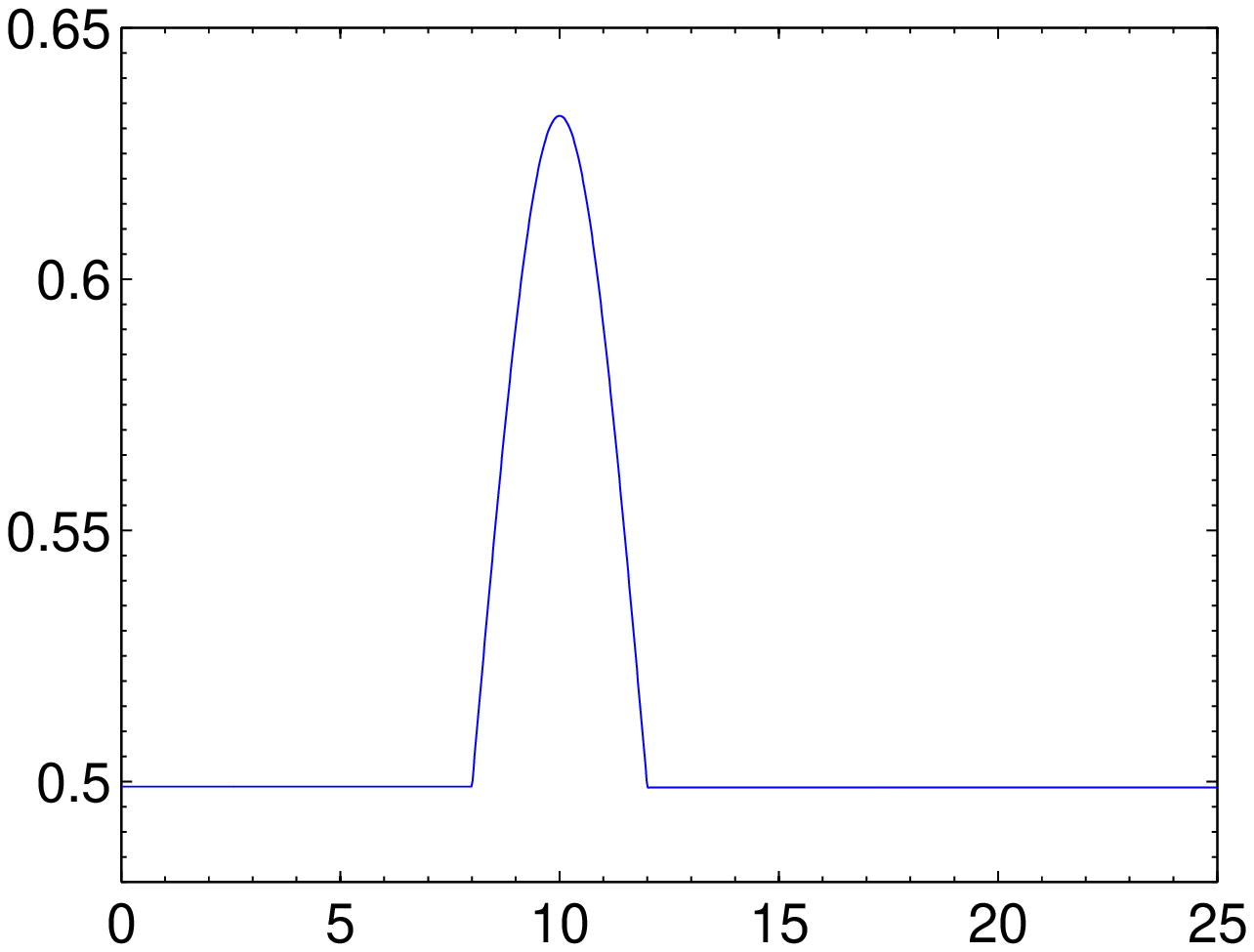}\includegraphics[width=2.3in]{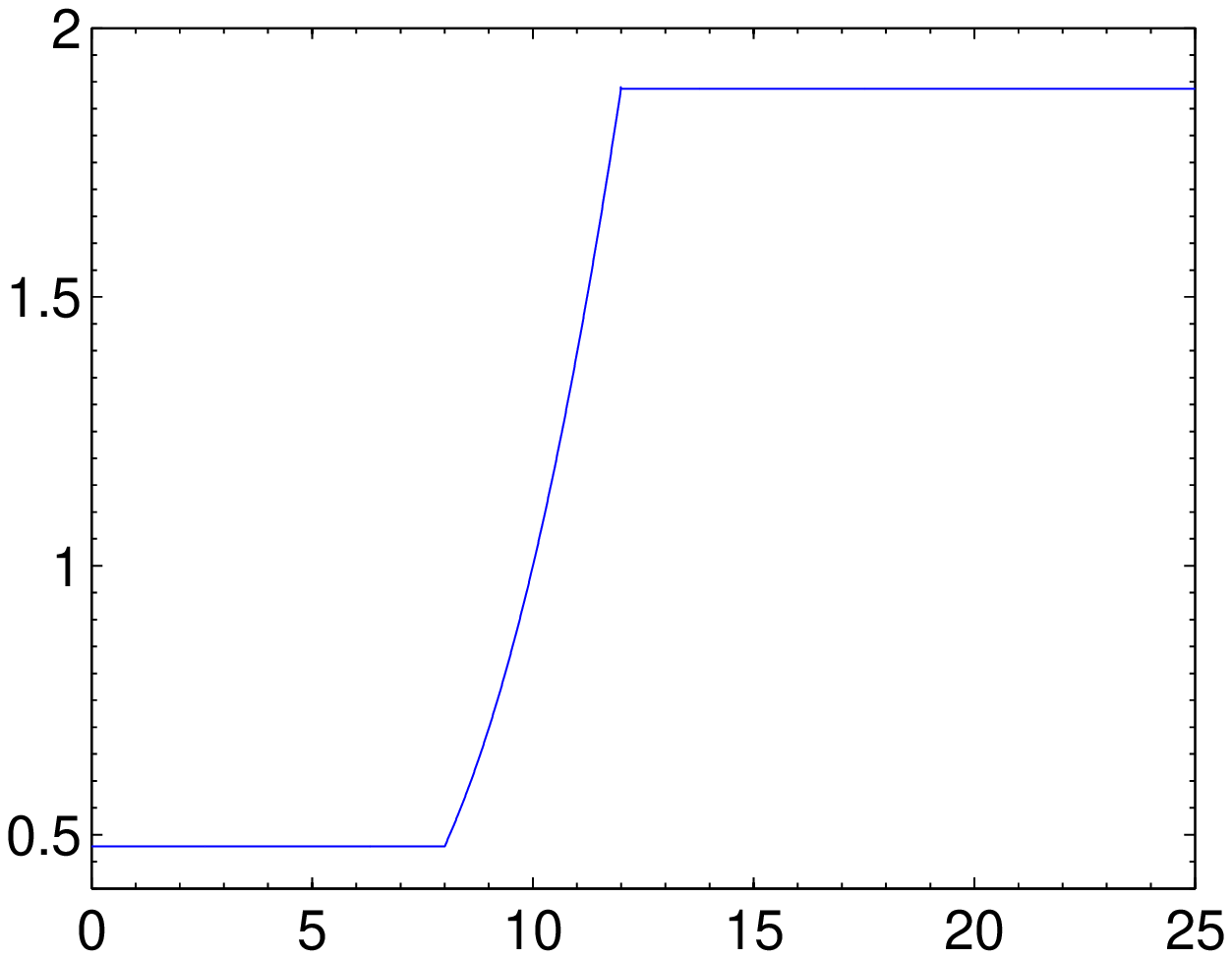}\includegraphics[width=2.3in]{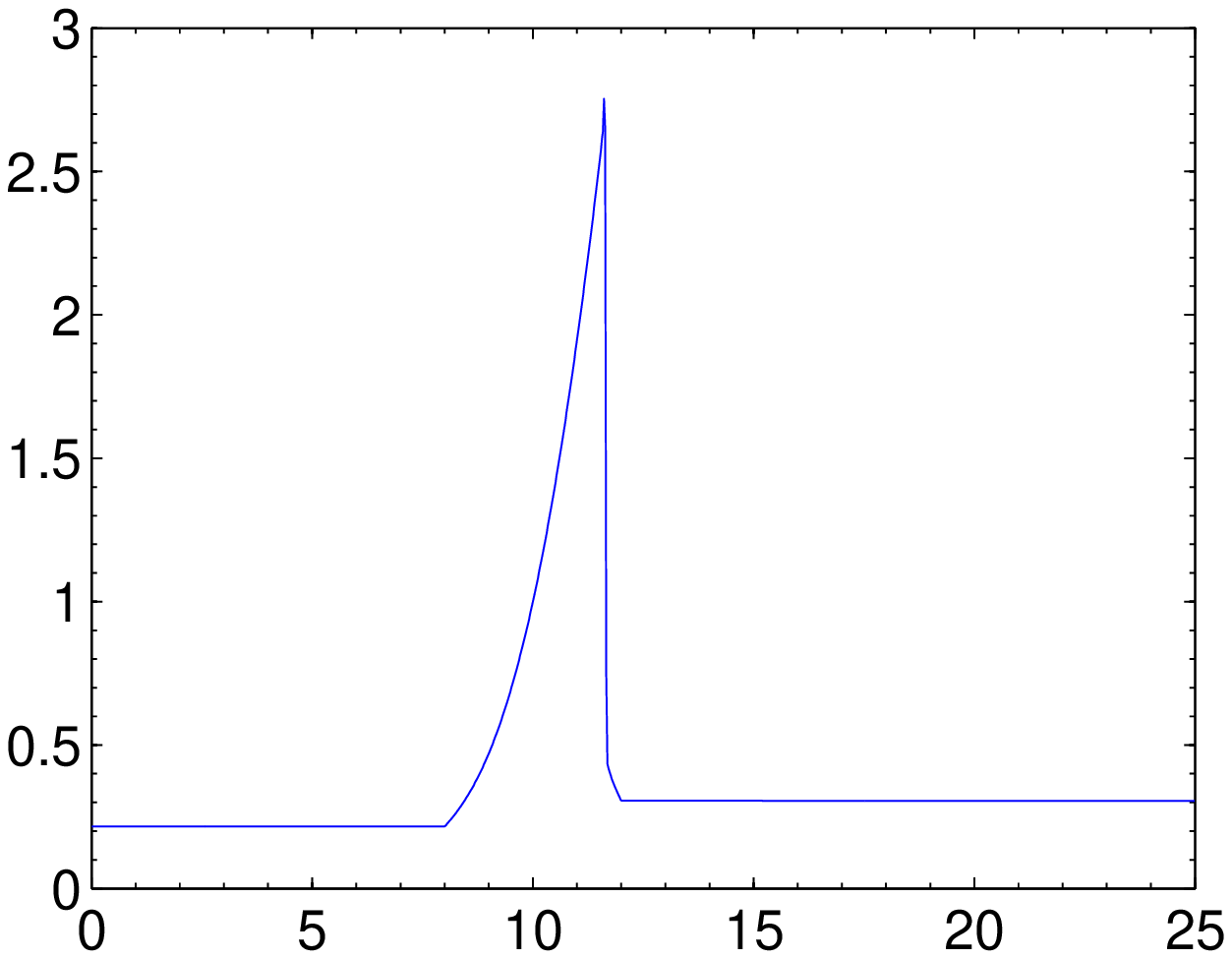}}
\caption{\sf Example \ref{ex75}: Froude number $Fr$ of the steady flows over a hump in the subcritical (left), transcritical without a 
stationary shock (middle) and transcritical with a stationary shock (right) cases.\label{fig75}}
\end{figure}
\begin{figure}[ht!]
\centerline{\includegraphics[width=2.65in]{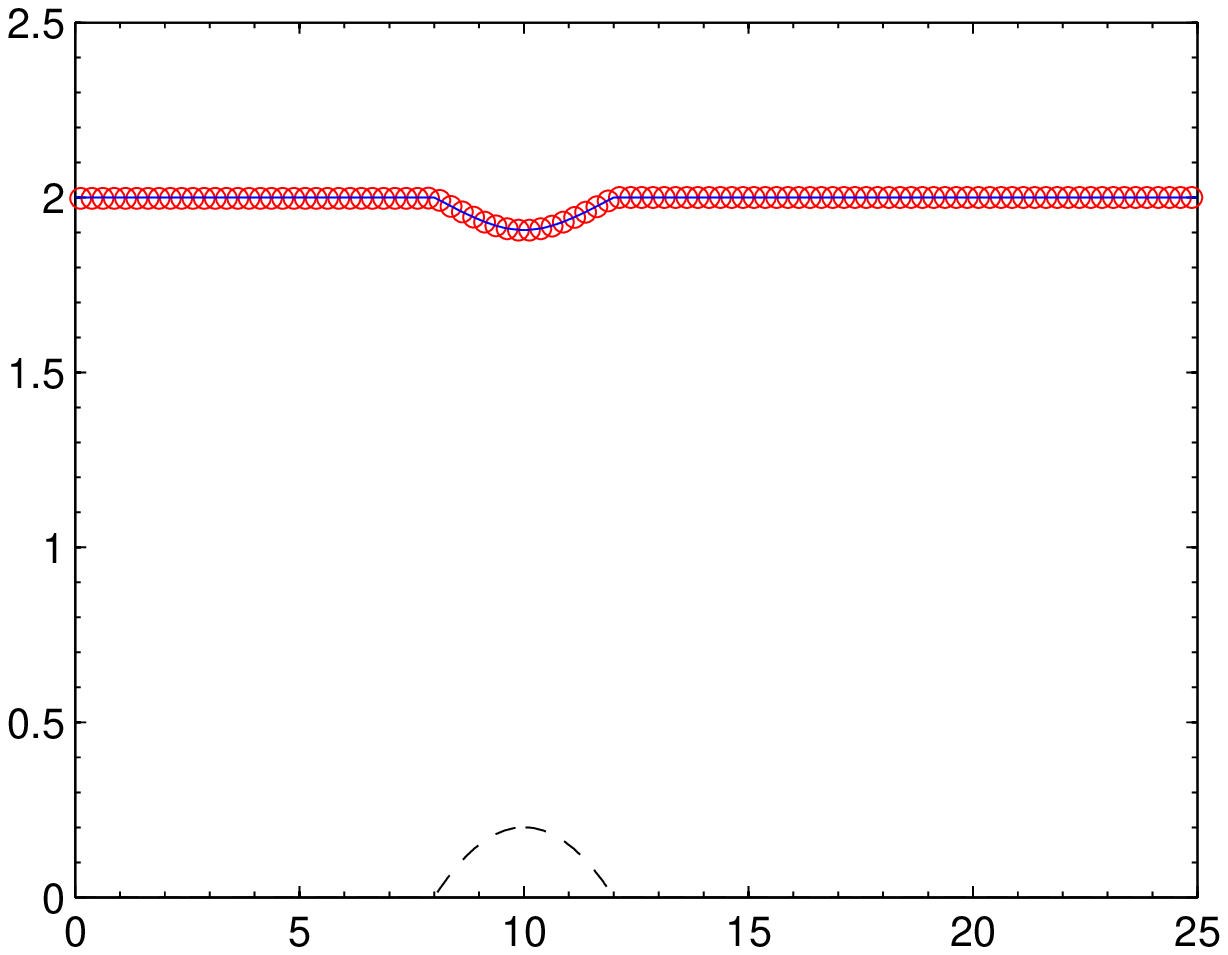}\includegraphics[width=2.65in]{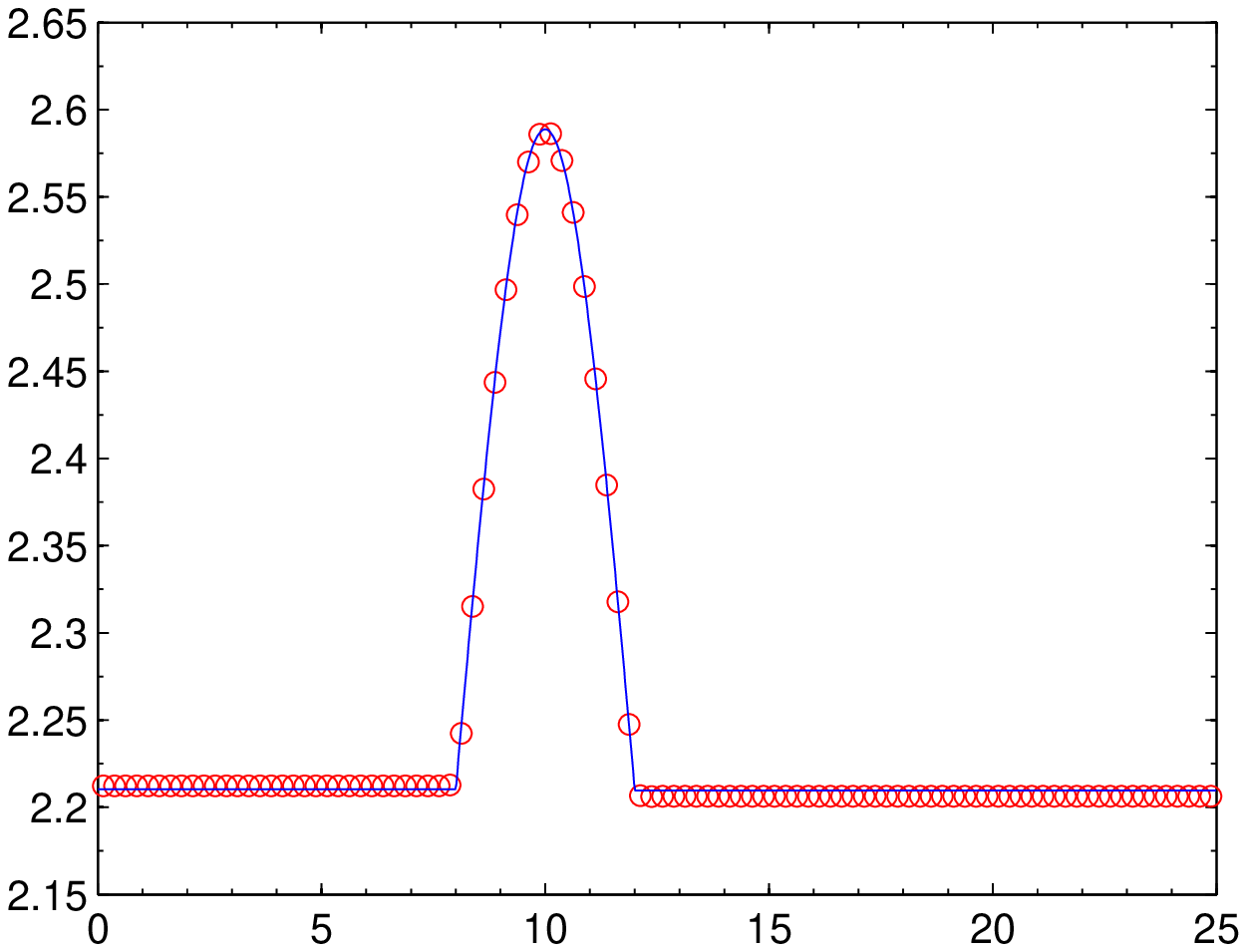}}
\caption{\sf Example \ref{ex75}, subcritical case: Solutions ($w$ together with $B$ is on the left, $u$ is on the right) computed using
uniform grids with 100 (circles) and 1000 (solid line) cells. The bottom topography $B$ is plotted with the dashed line.\label{fig76}}
\end{figure}
\begin{figure}[ht!]
\centerline{\includegraphics[width=2.65in]{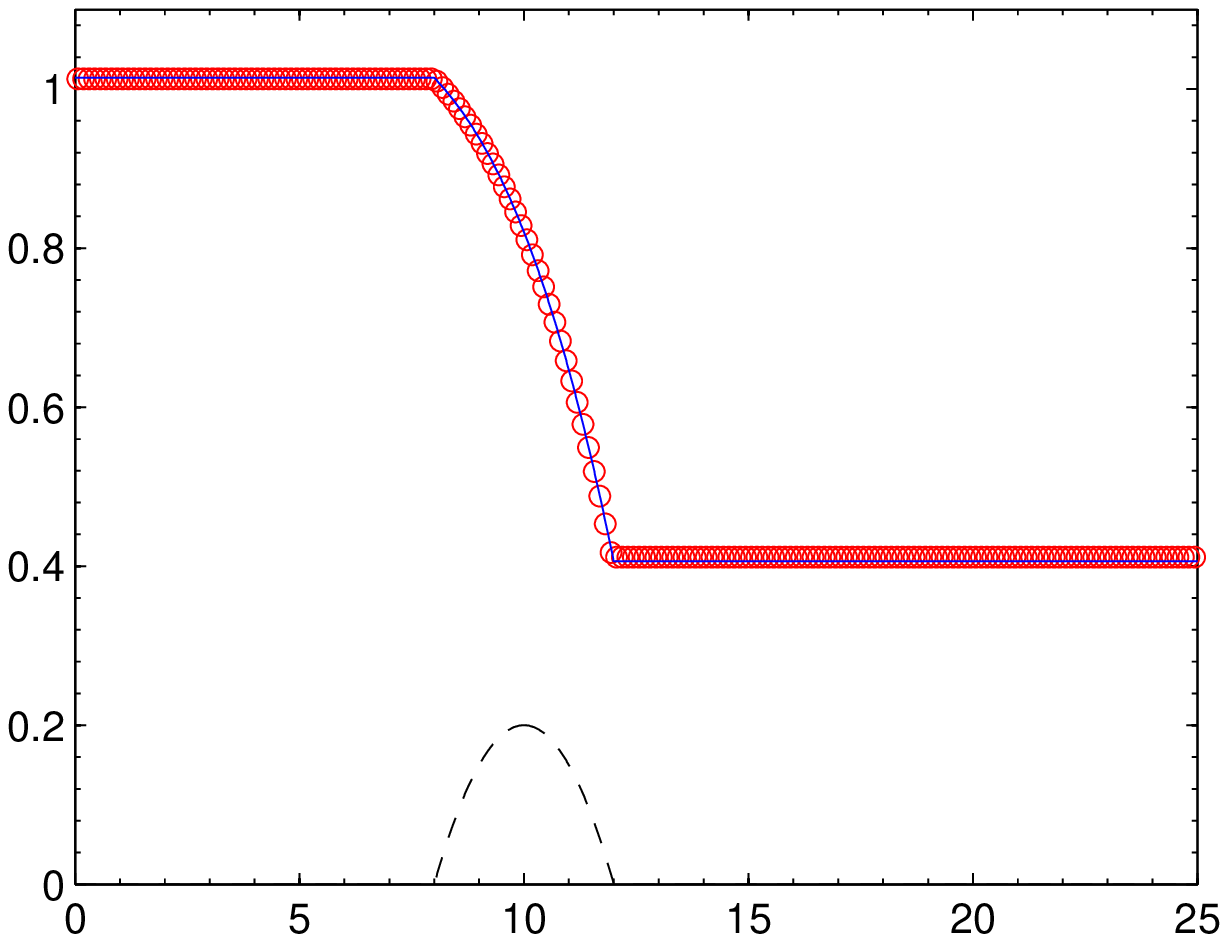}\includegraphics[width=2.65in]{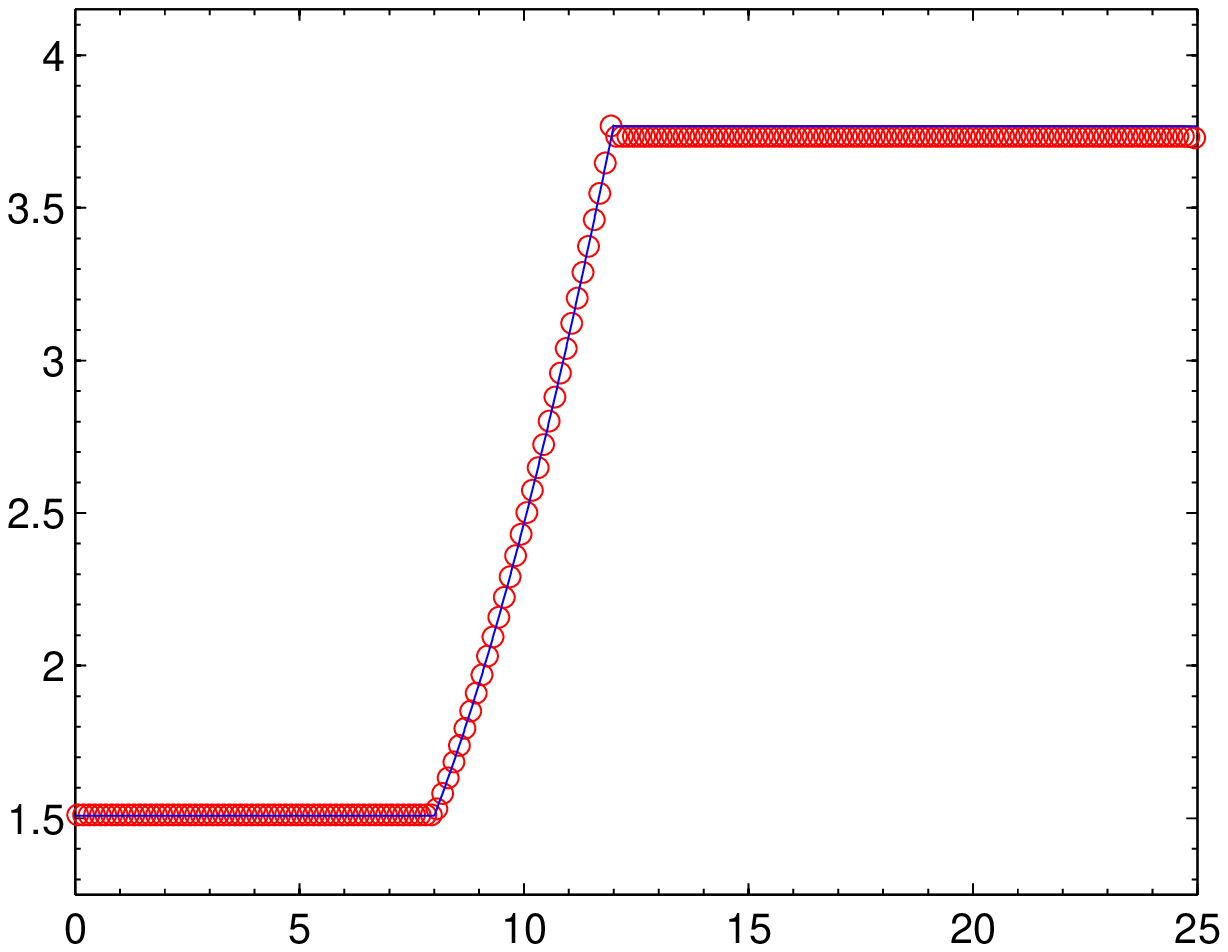}}
\caption{\sf Example \ref{ex75}, transcritical case without a stationary shock: Solutions ($w$ together with $B$ is on the left, $u$ is on
the right) computed using uniform grids with 200 (circles) and 2000 (solid line) cells. The bottom topography $B$ is plotted with the dashed
line.\label{fig77}}
\end{figure}
\begin{figure}[ht!]
\centerline{\includegraphics[width=2.65in]{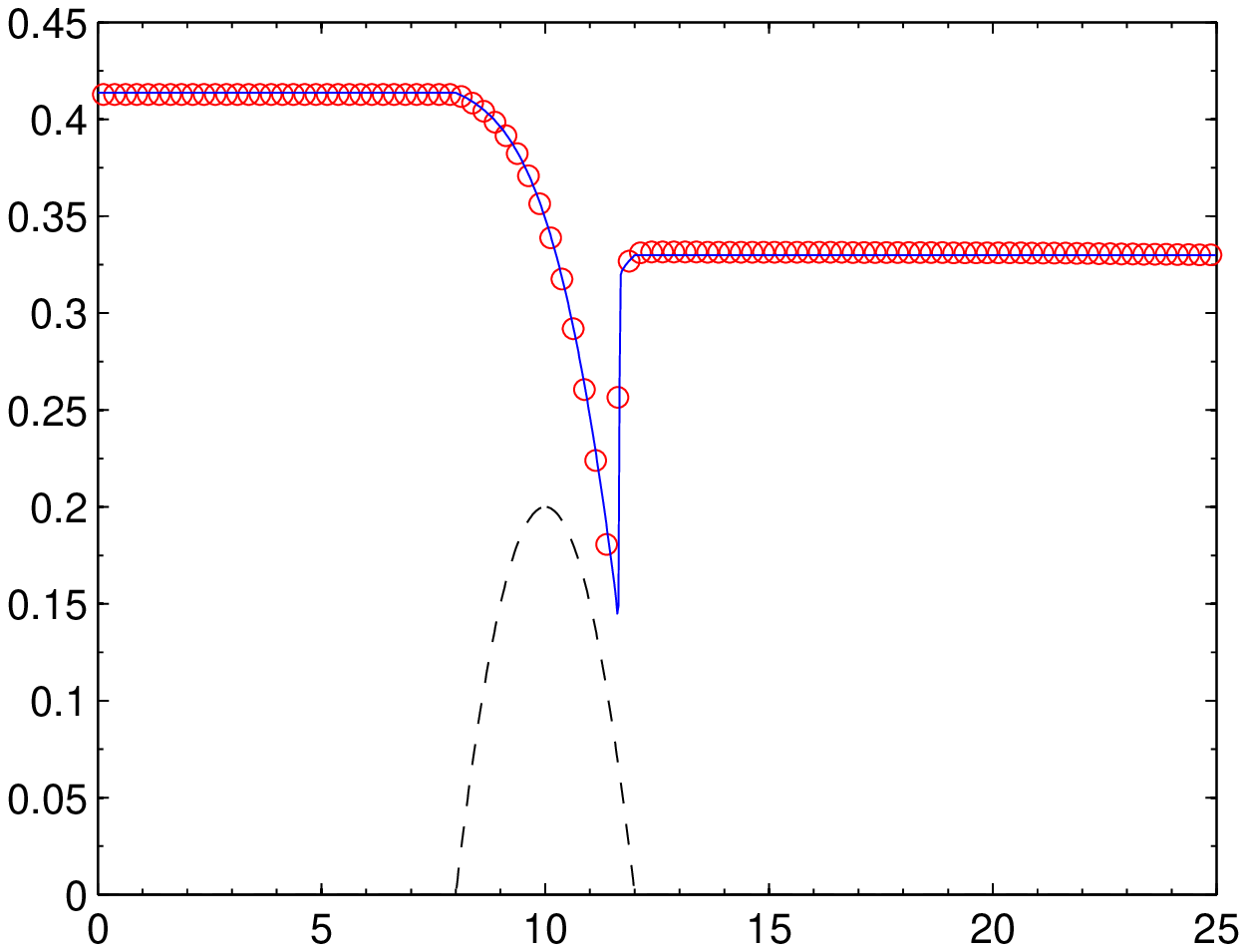}\includegraphics[width=2.65in]{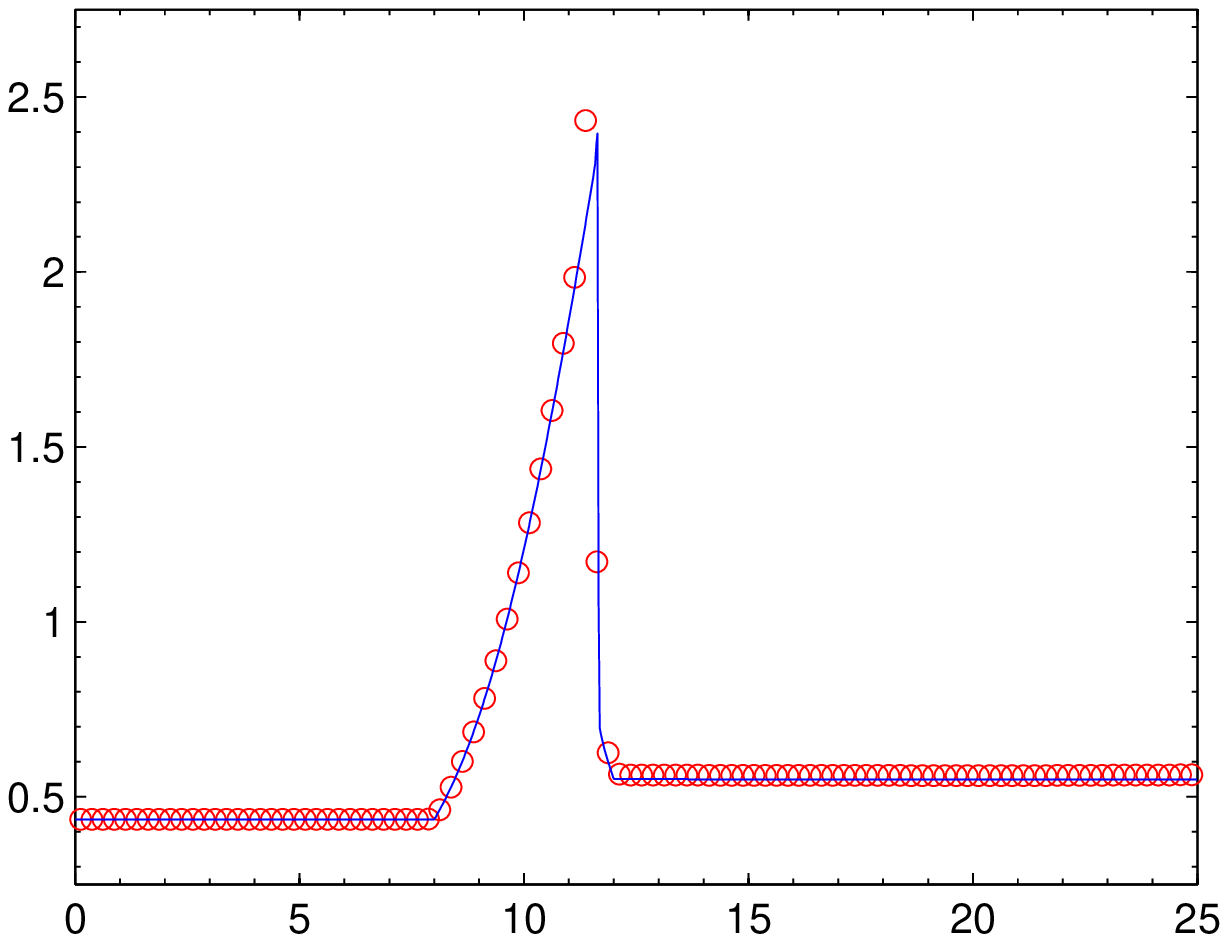}}
\caption{\sf Example \ref{ex75}, transcritical case with a stationary shock: Solutions ($w$ together with $B$ is on the left, $u$ is on the
right) computed using uniform grids with 100 (circles) and 1000 (solid line) cells. The bottom topography $B$ is plotted with the dashed
line.
\label{fig78}}
\end{figure}
\end{example}

\section{Two-Dimensional Numerical Examples}\label{sec8}
In this section, we consider the 2-D Saint-Venant system \eqref{eqn:2DSV}, for which ``lake at rest'' steady-state solutions are given by
\eqref{ss2D} and the corresponding equilibrium variables are $\bm{a}:=(w,hu,hv)^T$. When written in terms of $\bm{a}$, the source term
becomes $\bm{S}=\big(0,-g(w-B)B_x,-g(w-B)B_y\big)^T$.

Let $\xbar w(t)$ be the global spatial average of the water surface $w(x,y,t)$. We now apply the constant subtraction technique presented in
Section \ref{sec:wb} to the system \eqref{eqn:2DSV} and rewrite it as
\begin{equation}
\left\{\begin{aligned}&w_t+(hu)_x+(hv)_y=0,\\&(hu)_t+\left(\frac{(hu)^2}{w-B}+g[\xbar w(t)-w]B+\frac{g}{2}w^2\right)_x+
\left(\frac{(hu)(hv)}{w-B}\right)_y=g[\xbar w(t)-w]B_x,\\&(hv)_t+\left(\frac{(hu)(hv)}{w-B}\right)_x+
\left(\frac{(hv)^2}{w-B}+g[\xbar w(t)-w]B+\frac{g}{2}w^2\right)_y=g[\xbar w(t)-w]B_y.\end{aligned}\right.
\label{5.1}
\end{equation}
The system \eqref{5.1} is advantageous over the original system \eqref{eqn:2DSV} since at ``lake at rest'' steady states, the source terms
in the system \eqref{5.1} vanish and the fluxes are constant. Therefore, a direct application of the CSOC from \cite{Liu05} leads to the
2-D well-balanced CSOC.

As in the 1-D case, all of the simulations in this section are conducted by the third-order well-balanced CSOC with the HR limiter. In all
of the 2-D examples, we take the CFL number 0.45 and the gravitational acceleration constant $g=9.812$.

\begin{example}[Verification of the Well-Balanced Property]\label{ex81}
This test problem is taken from \cite{XingShu05}. The computational domain is $[0,1]\times[0,1]$, and the initial condition is the ``lake at
rest'' state with $w(x,y,0)\equiv1,\,(hu)(x,y,0)\equiv(hv)(x,y,0)\equiv0$ and $B(x,y)=0.8e^{-50[(x-0.5)^2+(y-0.5)^2]}$, which should be
exactly preserved. We use absorbing boundary conditions and compute the numerical solution at time $t=0.1$ using a $100\times100$ uniform
mesh. The $L^1$- and $L^\infty$-errors for both the surface level $w$ and discharges $hu$ and $hv$ are shown in Table \ref{tab81}. As one
can see, the $L^1$-errors are machine zeros, while the $L^\infty$-errors are also very close to the round-off errors and are smaller than
the errors reported in \cite{XingShu05}.
\begin{table}[ht!]
\begin{center}
\begin{tabular}{|c||c|c|c|}
\hline
                 & $w$                    & $hu$                   & $hv$ \\ \hline
$L^1$-error      & $2.2160\times10^{-17}$ & $8.4091\times10^{-18}$ & $9.5723\times10^{-18}$\\
$L^\infty$-error & $8.6597\times10^{-15}$ & $3.9053\times10^{-15}$ & $4.4746\times10^{-15}$\\
\hline
\end{tabular}
\caption{\sf Example \ref{ex81}: $L^1$- and $L^\infty$-errors at time $t=0.1$.\label{tab81}}
\end{center}
\end{table}

We have also computed a long time solution of this problem and observed that the $L^1$-errors remain equal to machine zeros.
\end{example}

\begin{example}[Small Perturbation of the ``Lake at Rest'' State]\label{ex82}
This test problem, proposed in \cite{LeVeque98}, is a 2-D version of Example \ref{ex73}. The computational domain is $[0,2]\times[0,1]$, the
initial data are
\begin{equation*}
(hu)(x,y,0)\equiv(hv)(x,y,0)\equiv0,\quad w(x,y,0)=\left\{\begin{aligned}&1.01,&&\mbox{if}~0.05\le x\le0.15,\\&1,&&\mbox{otherwise}.
\end{aligned}\right.
\end{equation*}
and absorbing boundary conditions are imposed at all of the boundaries. The bottom topography contains a vertical hump is given by
\begin{equation*}
B(x,y)=0.8e^{-5(x-0.9)^2-50(y-0.5)^2}.
\end{equation*}

We compute the solution and monitor how the right-going disturbance propagates past the hump (the left-going disturbance leaves the domain
and does not affect the solution after this). We use two uniform grids with $200\times100$ and $600\times300$ cells. The snapshots of the
computed solution at times $t=0.12,0.24,0.36,0.48$ and 0.6 are shown in Figure \ref{fig81}. Notice that the wave speed is smaller above the
hump than anywhere else, which distorts the initially planar disturbance. The obtained results clearly demonstrate that the CSOC can capture
the small perturbation and resolve the complicated features of the studied flow very well.
\begin{figure}[ht!]
\centerline{\includegraphics[width=3.05in]{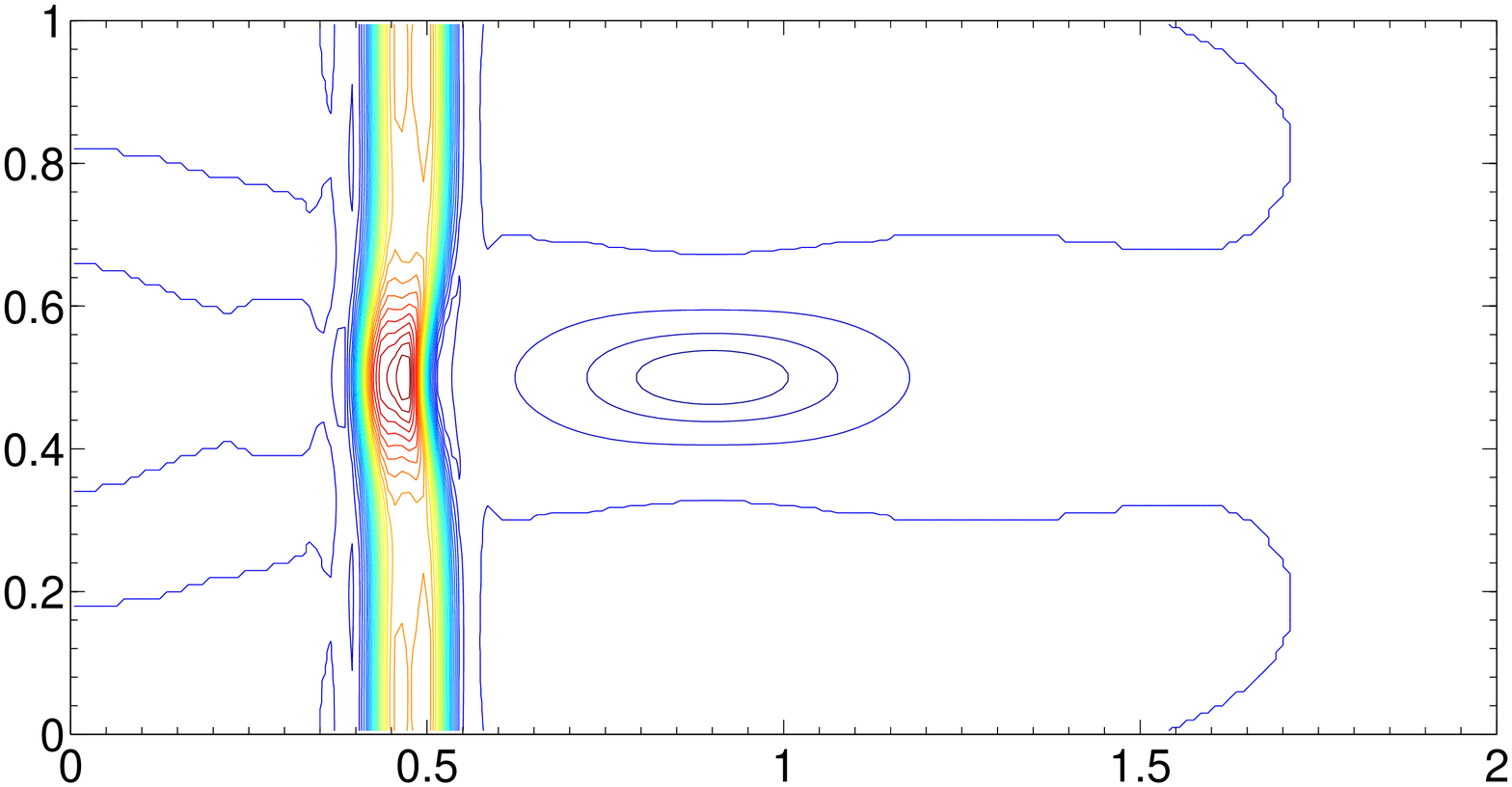}\includegraphics[width=3.05in]{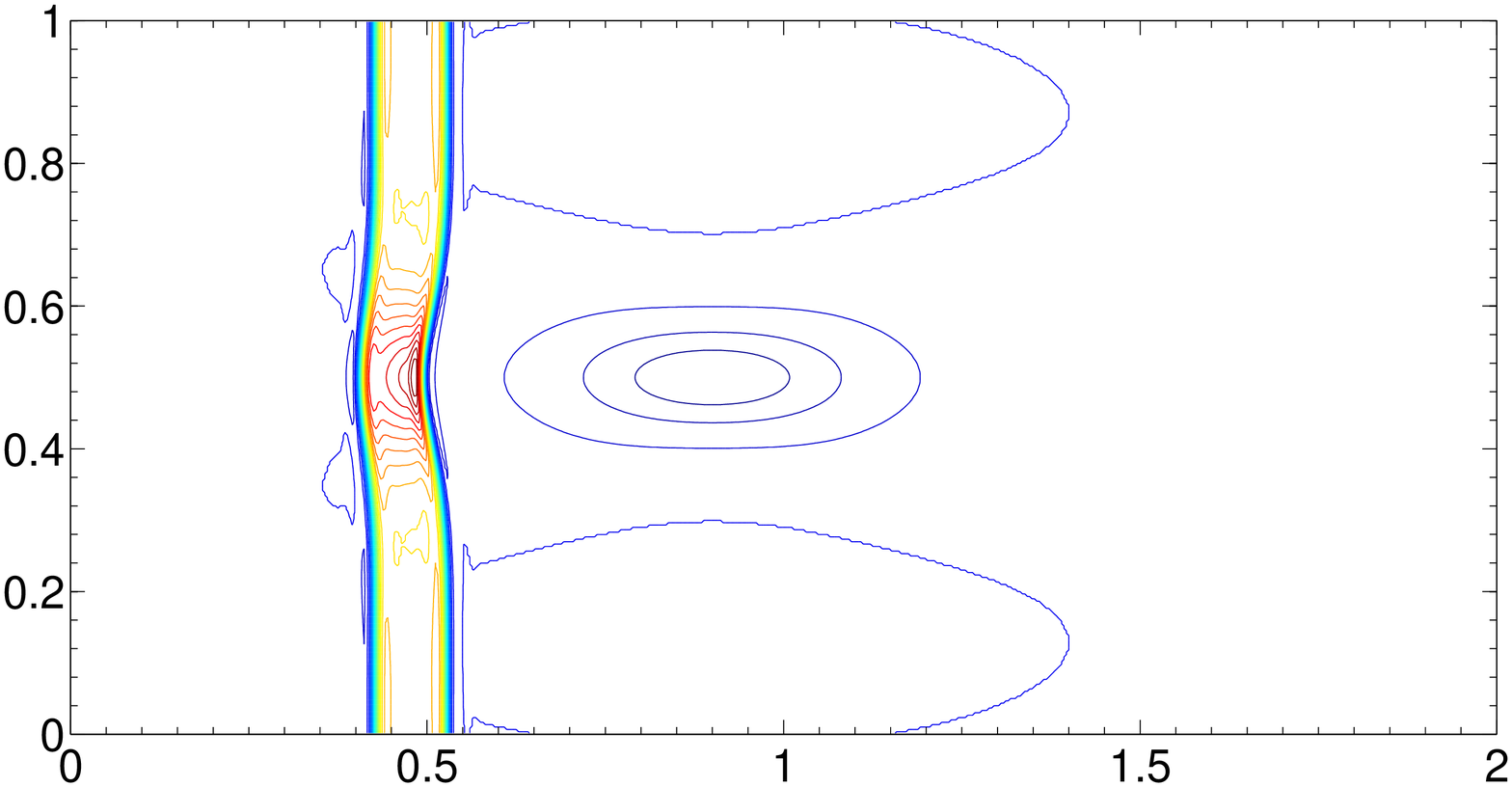}}
\centerline{\includegraphics[width=3.05in]{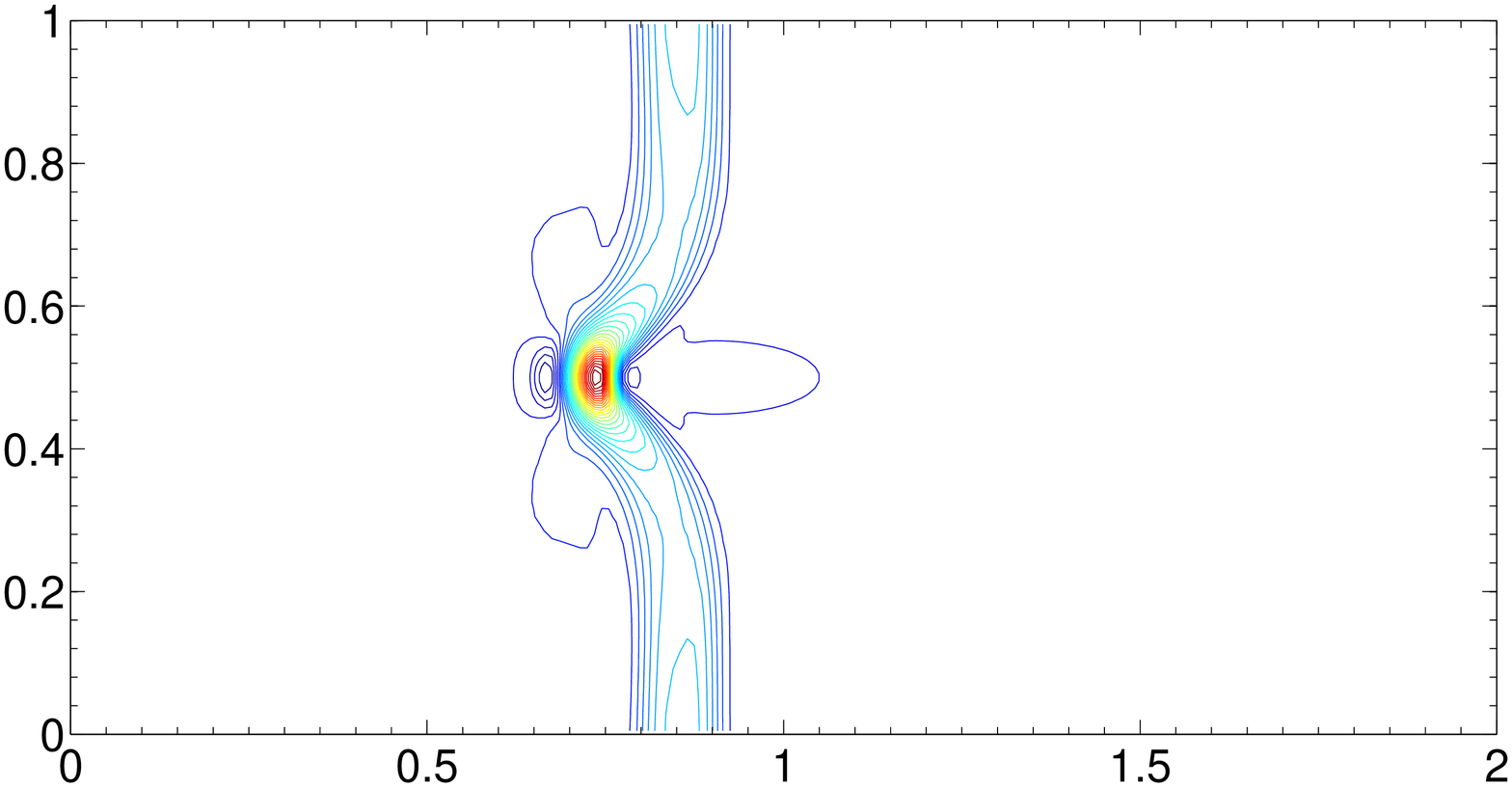}\includegraphics[width=3.05in]{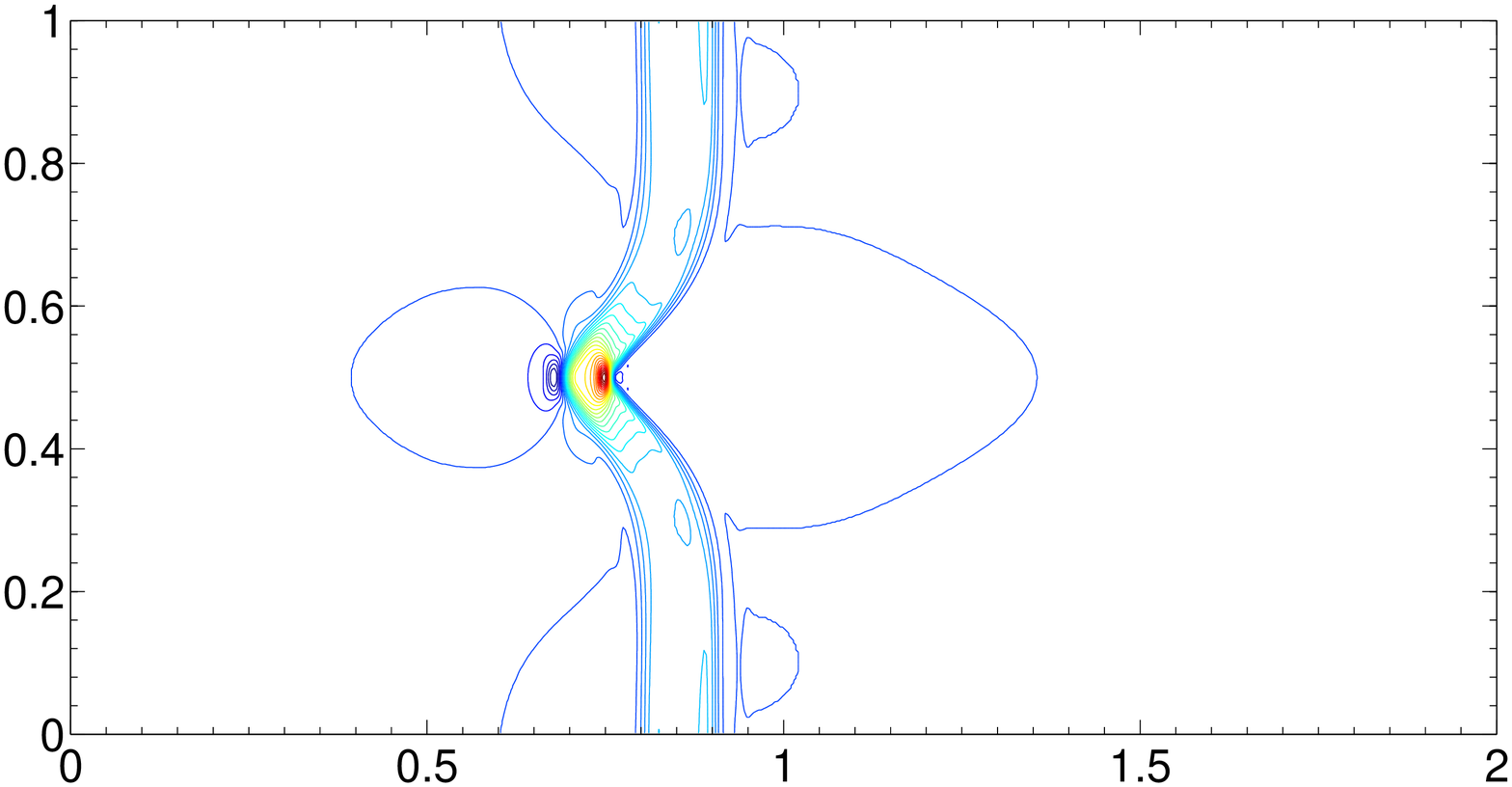}}
\centerline{\includegraphics[width=3.05in]{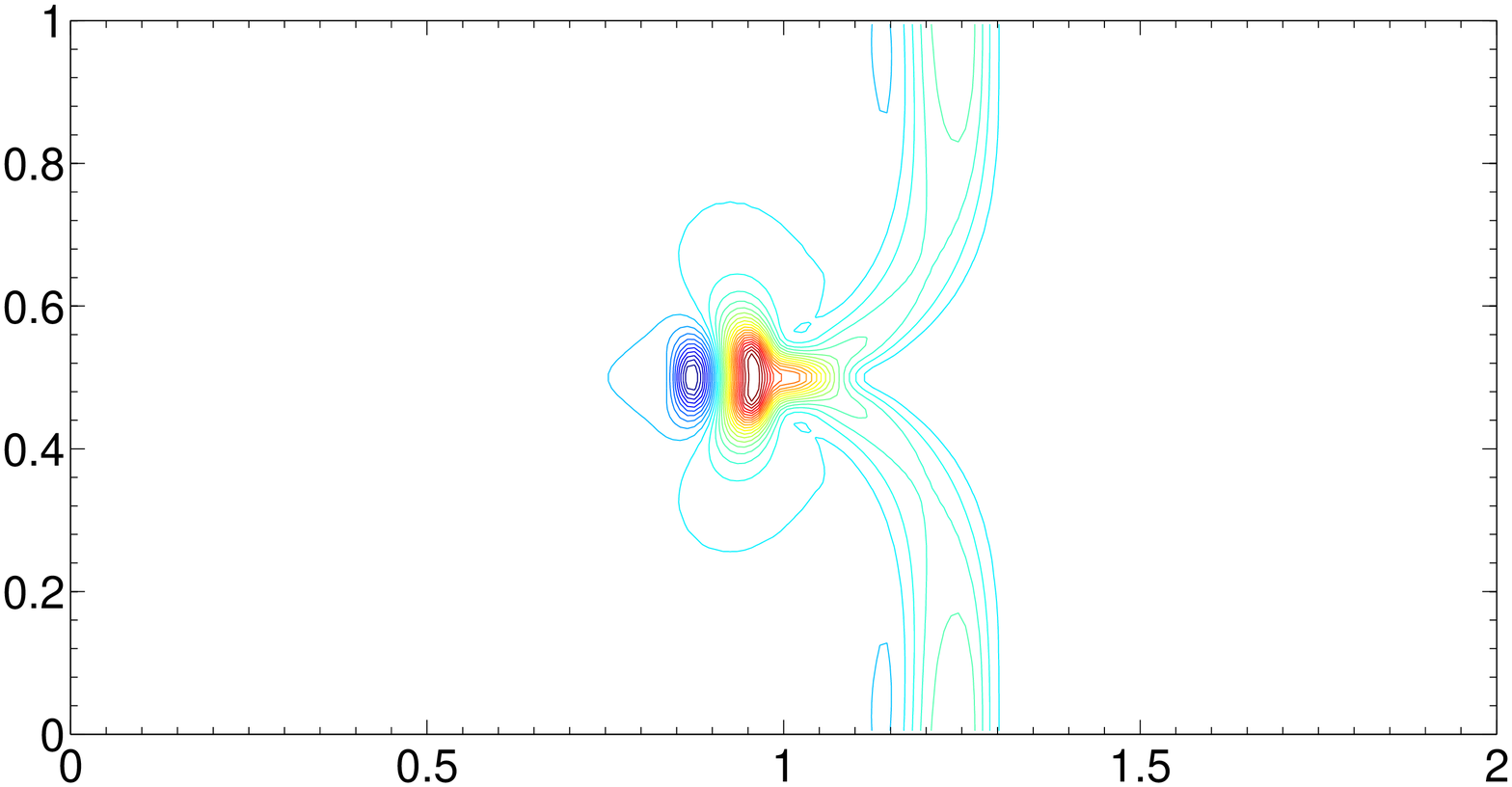}\includegraphics[width=3.05in]{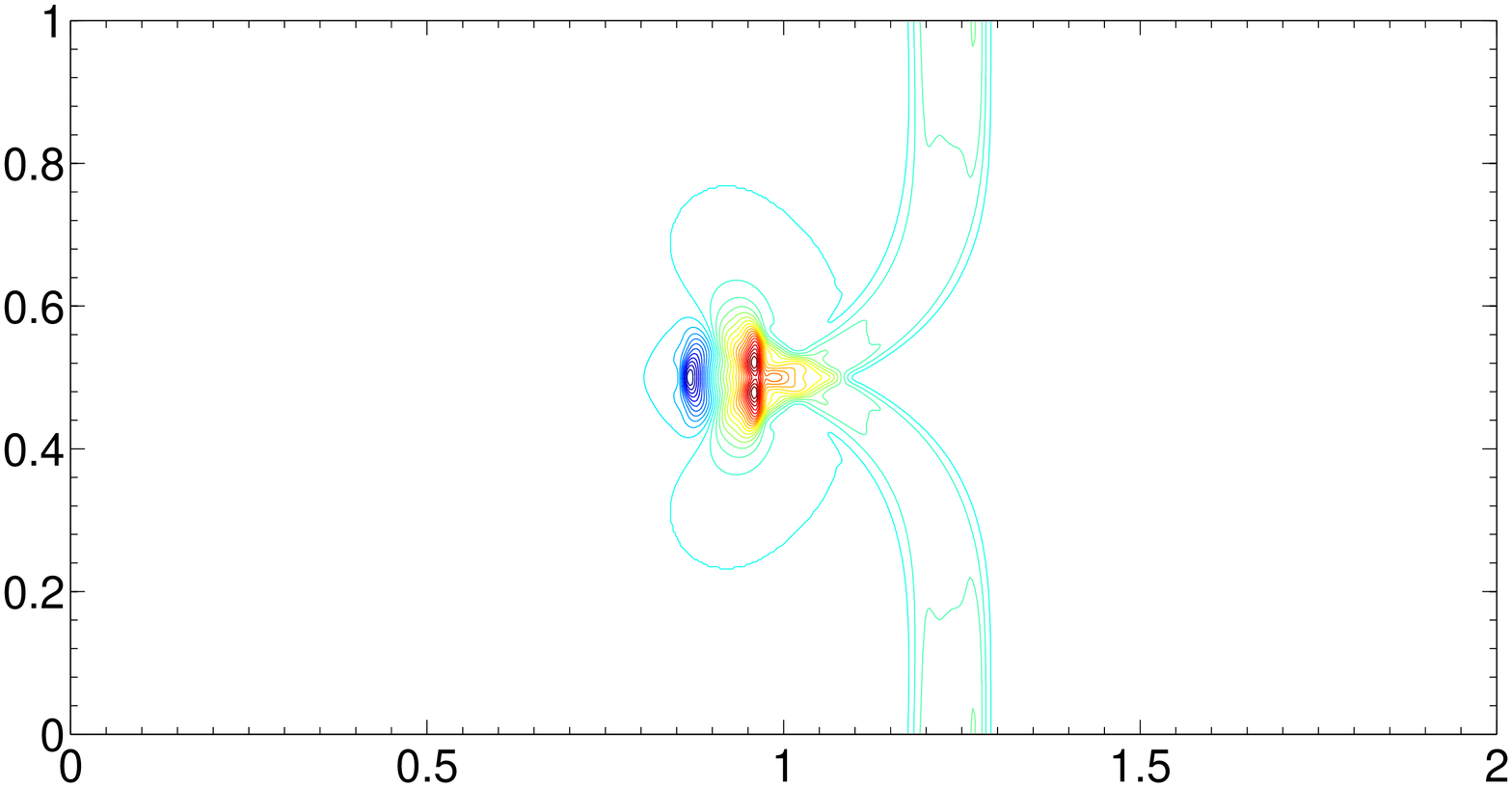}}
\centerline{\includegraphics[width=3.05in]{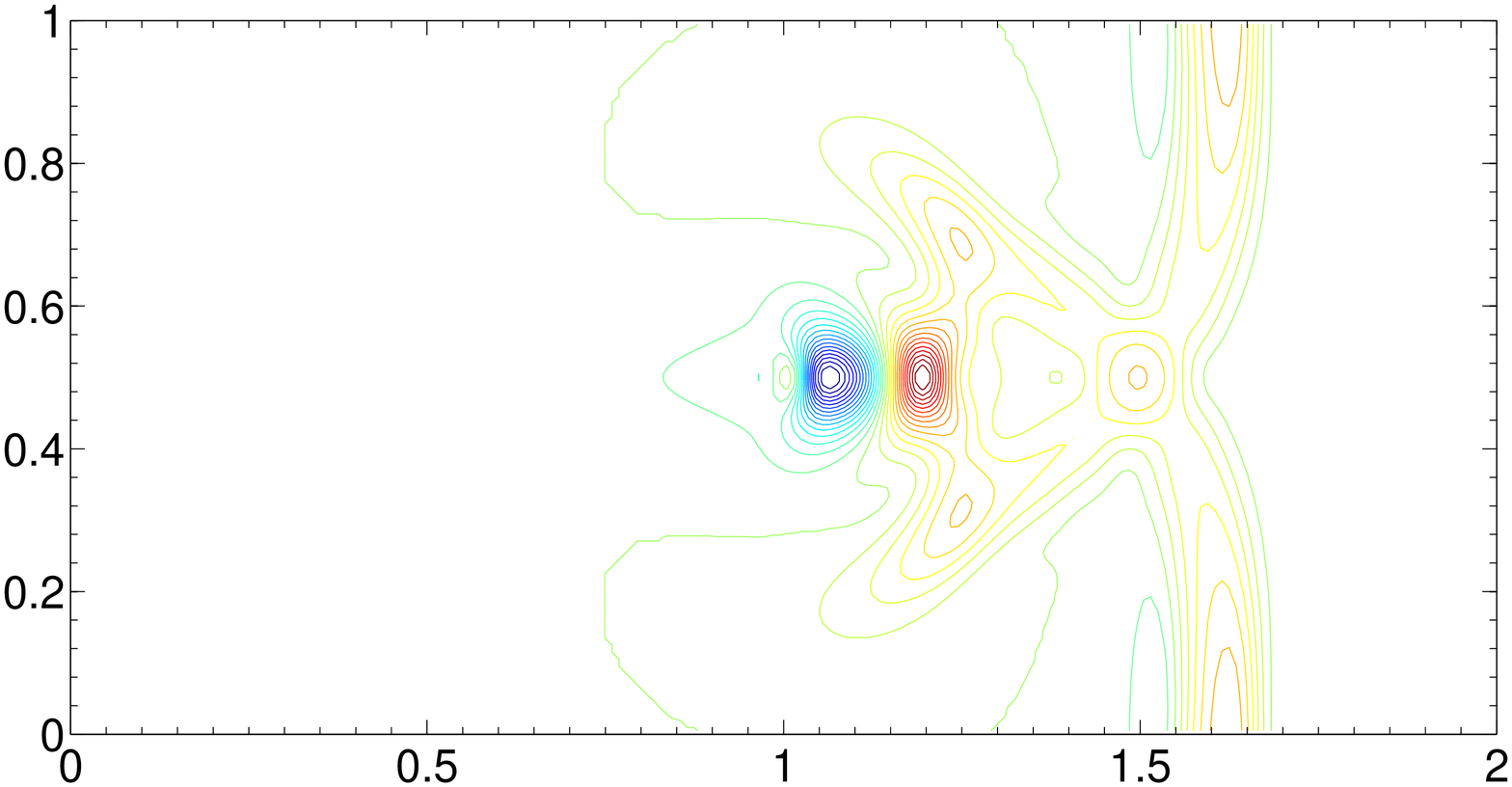}\includegraphics[width=3.05in]{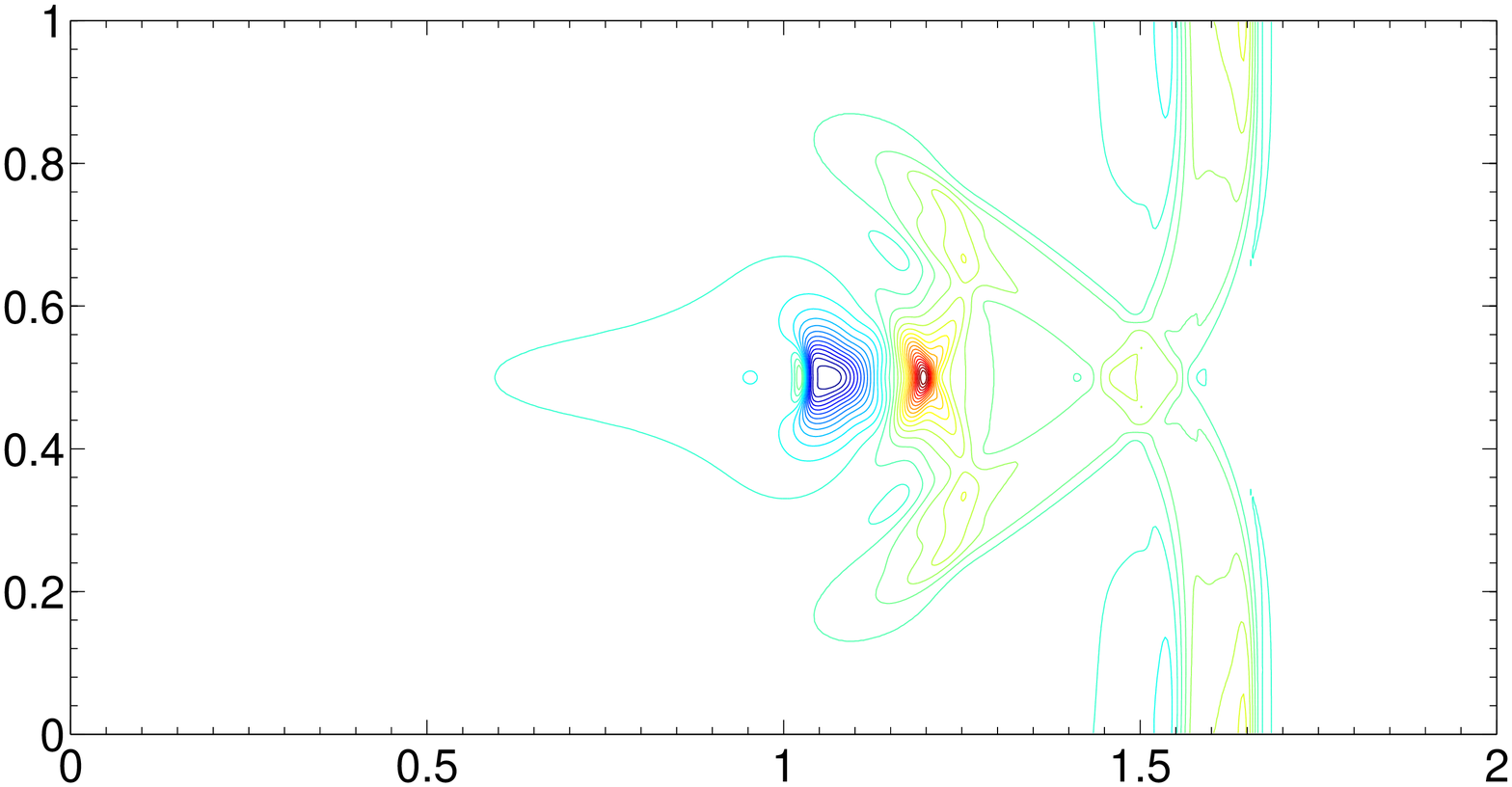}}
\centerline{\includegraphics[width=3.05in]{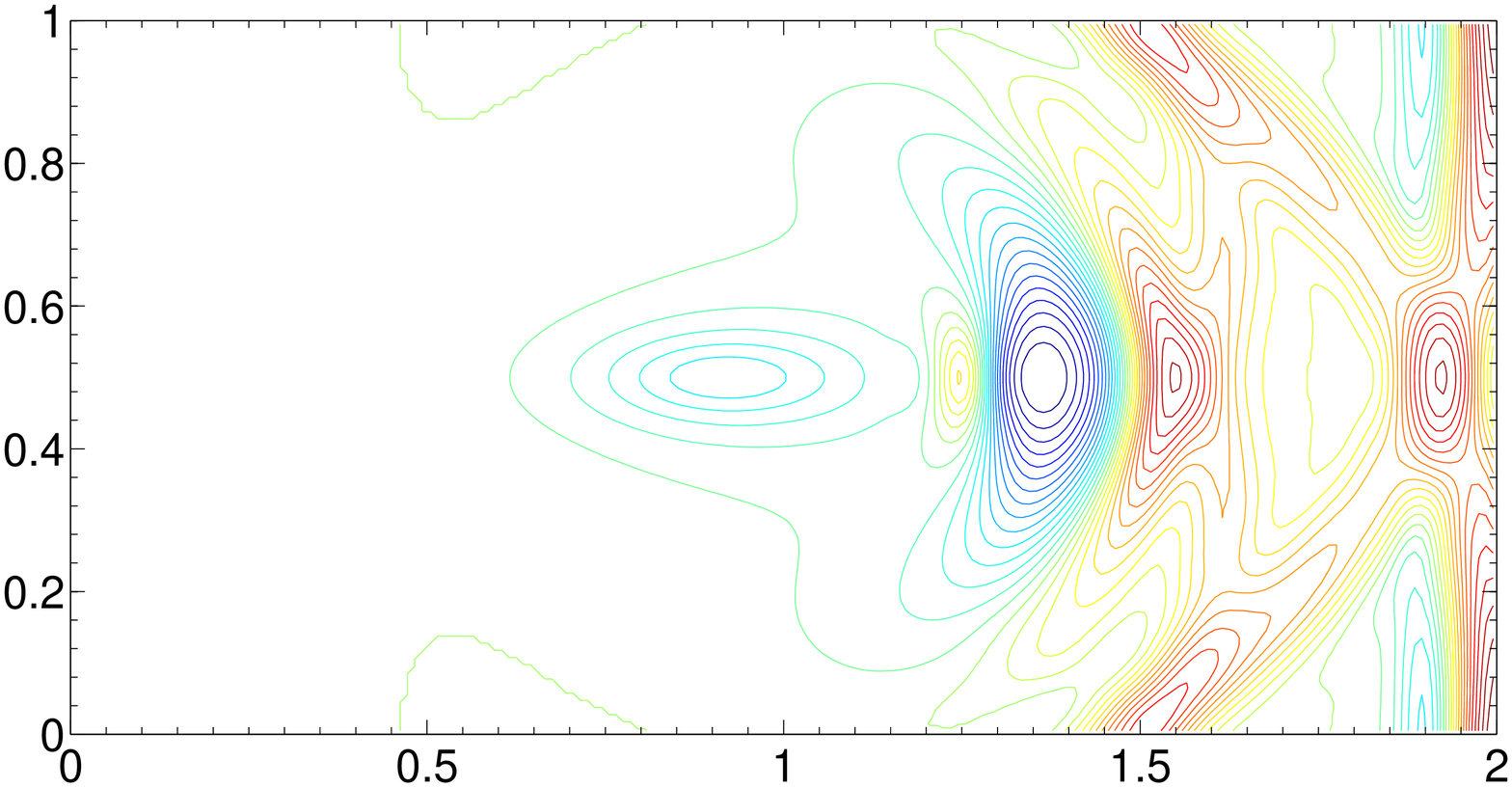} \includegraphics[width=3.05in]{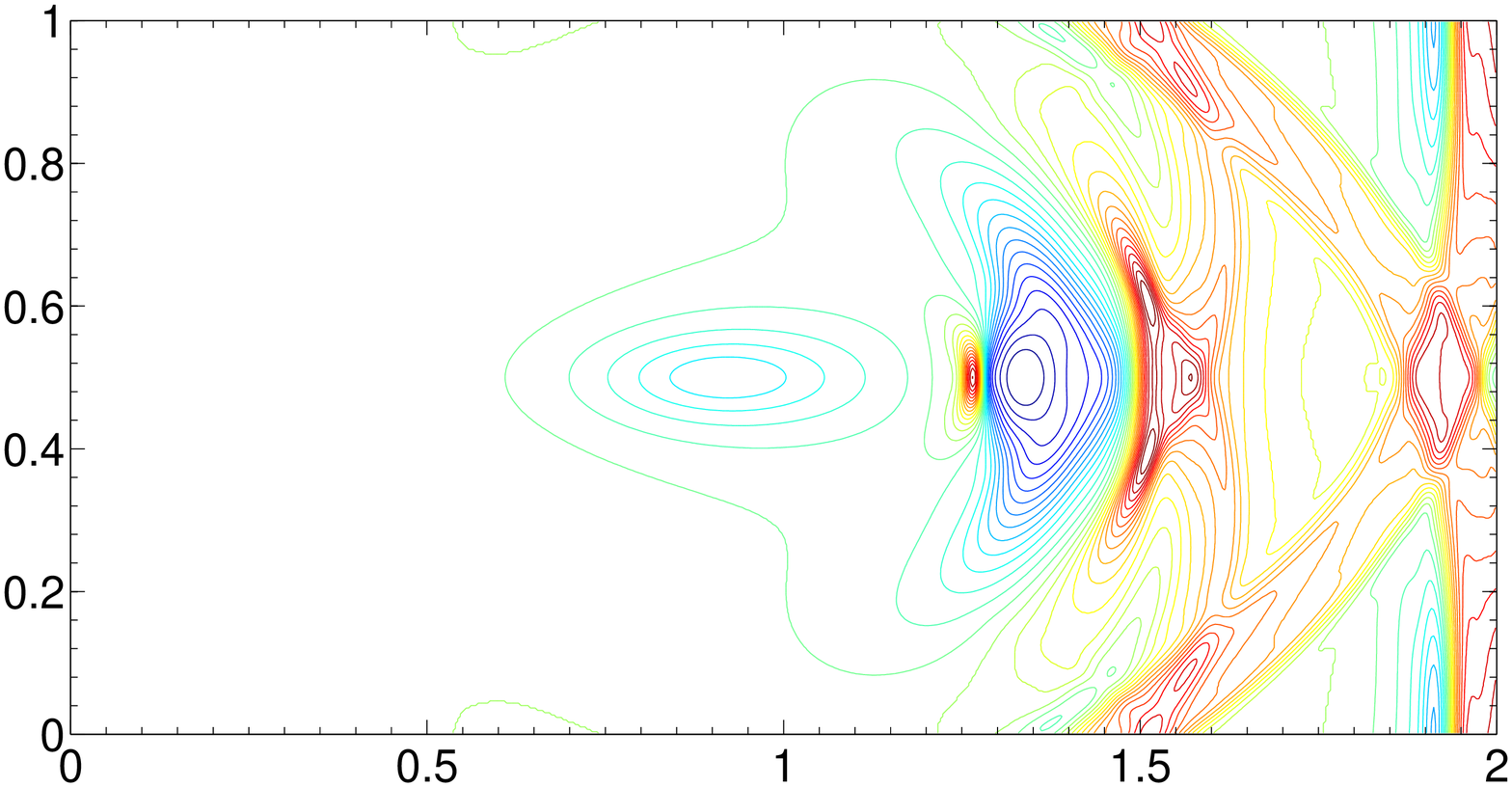}}
\caption{\sf Example \ref{ex82}: Contour-plot of $w$ on $200\times100$ (left) and $600\times300$ (right) uniform meshes. The solution is
shown at times $t=0.12, 0.24, 0.36, 0.48, 0.6$ (from top to bottom).\label{fig81}}
\end{figure}
\end{example}

\section{Conclusions and Future Works}\label{sec9}
In this paper, we have developed central schemes on overlapping cells (CSOC) for the Saint-Venant system of shallow water equations in both
one and two space dimensions. A new constant subtraction technique is proposed to make the CSOC well-balanced, that is, to guarantee that
they exactly preserve ``lake at rest'' steady states while still maintain the original high-order of accuracy and non-oscillatory property.
In fact, this technique is quite general and can be utilized for development other finite-volume schemes (this will be done in our future
works). We have provided extensive numerical results to demonstrate the well-balanced property, high-order of accuracy and non-oscillatory
nature of the proposed CSOC. Our future works will include development of positivity-preserving CSOC and also extension of their
well-balanced properties to the case of more general steady-state solutions.

\bibliographystyle{siam}
\bibliography{ref}
\end{document}